\newtheorem{theorem}{Theorem}
\newtheorem{definition}[theorem]{Definition}
\newtheorem{lemma}[theorem]{Lemma}
\newtheorem{remark}[theorem]{Remark}
\newtheorem{proposition}[theorem]{Proposition}
\newtheorem{corollary}[theorem]{Corollary}
\newtheorem{example}[theorem]{Example}
\newcommand{\IWR}{\rm IWR}
\title{Word-representability of split graphs generated by morphisms}
\author{Kittitat Iamthong\footnote{Department of Mathematics and Statistics, University of Strathclyde, 26 Richmond Street Glasgow G1 1XH, United Kingdom. 
{\bf Email:} kittitat.iamthong@strath.ac.uk .}}
\begin{document}  

\maketitle

\abstract{ A graph $G=(V,E)$ is word-representable if and only if there exists a word $w$ over the alphabet $V$ such that letters $x$ and $y$, $x\neq y$, alternate in $w$ if and only if $xy\in E$. A split graph is a graph in which the vertices can be partitioned into a clique and an independent set. There is a long line of research on word-representable graphs in the literature, and recently, word-representability of split graphs has attracted interest. 

In this paper, we first give a characterization of word-representable split graphs in terms of permutations of columns of the adjacency matrices. Then, we focus on the study of word-representability of split graphs obtained by iterations of a morphism, the notion coming from combinatorics on words. We prove a number of general theorems and provide a complete classification in the case of morphisms defined by $2\times 2$ matrices.}

\section{Introduction}\label{Sec-intro}

A graph $G=(V,E)$ is {\em word-representable} if and only if there exists a word $w$ over the alphabet $V$ such that letters $x$ and $y$, $x\neq y$, alternate in $w$ if and only if $xy\in E$. In this definition, letters $x$ and $y$ {\em alternate} in $w$ if after removing all letters but $x$ and $y$, we would either get a word of the form $xyxy\cdots$, or of the form $yxyx\cdots$, of even or odd length. Also, by definition, $w$ must contain each letter in $V$ at least once. For example, the cycle graph on four vertices labeled 1, 2, 3, 4 in clockwise direction is word-representable because it can be represented by the word 14213243.

There is a long line of research on word-representable graphs in the literature that is summarized in \cite{K17}. Word-representable graphs are important as they generalize several well-known and well-studied classes of graphs such as 3-colorable graphs, comparability graphs and circle graphs \cite{KL15}. Note that the class of word-representable graphs is hereditary, that is, removing a vertex in a word-representable graph results in a word-representable graph. The wheel graph $W_5$ is the smallest non-word-representable graph. One of the key results in the area is the following theorem, where an orientation of a graph is {\it semi-transitive} if it is acyclic, and for any directed path $v_0\rightarrow v_1\rightarrow \cdots\rightarrow v_k$ either there is no edge between $v_0$ and $v_k$, or $v_i\rightarrow v_j$ is an edge for all $0\leq i<j\leq k$.

\begin{theorem}[\cite{HKP16}]\label{key-thm} A graph is word-representable if and only if it admits a semi-transitive orientation. \end{theorem}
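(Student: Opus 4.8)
The plan is to establish the two implications of the equivalence separately; write $G=(V,E)$. The direction ``word-representable $\Rightarrow$ semi-transitively orientable'' is the more direct one, while the converse requires an explicit construction and is where the real effort lies.

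For the forward direction, suppose $w$ is a word representing $G$. I would orient every edge $xy\in E$ from $x$ to $y$ precisely when the first (leftmost) occurrence of $x$ precedes that of $y$ in $w$. This orientation is acyclic because it refines the linear order of first occurrences, so it remains only to rule out shortcuts. Assuming a shortcut exists, I would pass to a minimal one: a directed path $v_0\to v_1\to\cdots\to v_k$ together with the edge $v_0\to v_k$, but with some pair $v_i,v_j$, $i<j$, non-adjacent. Minimality should force the configuration into a small, analyzable shape, and I would then derive a contradiction directly from the alternation conditions. The path edges and the back edge $v_0v_k$ pin down how the occurrences of $v_0$, $v_k$ and the intermediate letters interleave, and a missing edge $v_iv_j$ produces an interleaving that violates one of these alternation requirements. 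Verifying that the forced interleavings are genuinely incompatible is the only real content here.

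For the converse, suppose $G$ carries a semi-transitive orientation; I would construct a word $w$ representing $G$ by induction on $|V|$, processing the vertices in a topological order $v_1,\dots,v_n$, which exists since the orientation is acyclic. Inductively I would maintain a word $w_t$ representing the subgraph induced on $\{v_1,\dots,v_t\}$, and pass from $w_{t-1}$ to $w_t$ by inserting occurrences of the new letter $v_t$. The insertion must be chosen so that $v_t$ alternates with exactly its in-neighbours among $v_1,\dots,v_{t-1}$ and fails to alternate with every in-prefix non-neighbour; since $v_t$ is last in the topological order of the current prefix, all of its prefix-neighbours are in-neighbours, which keeps the bookkeeping one-directional. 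The crucial claim is that such a consistent insertion is \emph{always} available, and this is exactly where semi-transitivity enters: shortcut-freeness should guarantee that the neighbourhood of $v_t$ interacts with the already-placed letters in a nested, conflict-free manner, so that the positions demanded by different neighbours never contradict one another.

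I expect the main obstacle to be precisely this insertability claim — proving that the simultaneous alternation requirements imposed by all neighbours of $v_t$ can be met by a single placement. A failure would amount to two neighbours whose occurrences straddle $v_t$ incompatibly, and the plan is to show that unwinding such an incompatibility exhibits a shortcut in the orientation, contradicting the hypothesis. Making this correspondence precise, and choosing the right invariant for $w_t$ (for instance insisting that each $w_t$ be $k$-uniform for a common $k$, so that the admissible insertion positions are uniformly controlled), is the technical heart of the argument.
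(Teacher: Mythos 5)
First, note that the paper you are working against contains no proof of this statement: Theorem~\ref{key-thm} is quoted from \cite{HKP16} as a known result, so the only meaningful comparison is with the published proof there. Your forward direction is essentially the standard argument and is salvageable: one orients each edge by the order of first occurrences. But be aware that the clean verification of shortcut-freeness is usually done after first passing to a \emph{uniform} representing word (every word-representable graph admits one), where alternation of adjacent letters means the $m$th occurrences interleave perfectly, and for a directed path $v_0\rightarrow\cdots\rightarrow v_k$ with $v_0\rightarrow v_k$ one gets the sandwich $v_{0,m}<v_{i,m}<v_{j,m}<v_{k,m}<v_{0,m+1}$, forcing every pair $v_i,v_j$ to alternate. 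Your ``minimal shortcut'' plan for arbitrary (non-uniform) words can be pushed through, but it then needs careful bookkeeping of occurrence counts (which may differ by one along the path); as written, this ``only real content'' is deferred rather than done.

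The genuine gap is in the converse, and it is exactly the step you flag yourself: the ``insertability claim.'' Your proposal is to induct on a topological order, inserting the occurrences of each new sink $v_t$ into a word $w_{t-1}$ representing the prefix graph, and you assert that semi-transitivity ``should guarantee'' a consistent placement. This is not a lemma you reduce the theorem to --- it \emph{is} the theorem, and the reduction you sketch (``a failed insertion unwinds to a shortcut'') does not obviously hold: an insertion into a \emph{particular} word $w_{t-1}$ can fail for reasons local to that word (insufficient flexibility in how the occurrences of $N(v_t)$ are arranged) even when the whole graph is semi-transitively oriented, since semi-transitivity of $G$ does not control which of the many words representing $G-v_t$ your induction happens to have produced. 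You acknowledge that a stronger invariant is needed but do not supply one, and it is far from clear that ``$k$-uniform for a common $k$'' suffices. Indeed, the proof in \cite{HKP16} does not proceed by one-vertex-at-a-time insertion into a fixed word: it constructs the representing word globally from the semi-transitive orientation, organizing the construction around a maximum clique (whose induced orientation is transitive by Lemma~\ref{lem-tran-orie}) and paying a controlled blow-up for the remaining vertices, which yields the quantitative conclusion that a semi-transitively oriented graph on $n$ vertices with clique number $\kappa$ is representable with each letter occurring at most $2(n-\kappa)$ times. Without a proof of your insertion lemma, or a replacement of the induction by such a global construction, your argument for the hard direction is a restatement of the problem rather than a solution to it.
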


The following simple lemma will also be of use to us in this paper.
\begin{lemma}[\cite{KLMW17}]\label{lem-tran-orie} Let $K_m$ be a clique in a graph $G$. Then any acyclic orientation of $G$ induces a transitive orientation on $K_m$ (where the presence of edges $u\rightarrow v$ and $v\rightarrow z$ implies the presence of the edge $u\rightarrow z$). In particular, any semi-transitive orientation of $G$ induces a transitive orientation on $K_m$. In either case, the orientation induced on $K_m$ contains a single source and a single sink. \end{lemma}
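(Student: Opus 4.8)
The plan is to exploit the fact that the restriction of the orientation to $K_m$ is a \emph{tournament}, and then use acyclicity to force transitivity by a short forbidden-cycle argument. First I would observe that since $K_m$ is complete, every orientation of $G$ assigns a direction to each edge of $K_m$, so the induced orientation on $K_m$ is a tournament (a complete digraph with no $2$-cycles). Moreover, any directed cycle lying entirely within $K_m$ would in particular be a directed cycle in $G$; hence if the orientation of $G$ is acyclic, its restriction to $K_m$ is acyclic as well. This reduces everything to a statement about acyclic tournaments.

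For transitivity, I would argue as follows. Suppose $u\rightarrow v$ and $v\rightarrow z$ are edges of the induced orientation. Because $K_m$ is a clique, $u$ and $z$ are adjacent, so exactly one of $u\rightarrow z$ or $z\rightarrow u$ is present. If the edge were oriented $z\rightarrow u$, then $u\rightarrow v\rightarrow z\rightarrow u$ would be a directed triangle inside $K_m$, contradicting acyclicity. Therefore $u\rightarrow z$ must be an edge, which is precisely the required transitivity condition. The ``in particular'' clause is then immediate: a semi-transitive orientation is acyclic by definition, so it falls under the acyclic case already handled.

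For the single source and single sink, I would note that a transitive tournament defines a strict linear order on $V(K_m)$, since the induced relation is total (tournament), irreflexive and antisymmetric (no loops or $2$-cycles), and transitive (just established). A finite strict linear order has a unique minimal and a unique maximal element, and these correspond exactly to the unique source and unique sink of the orientation. Alternatively, one can argue directly: two distinct sources $u$ and $w$ would be joined by an edge of $K_m$, giving one of them positive in-degree and contradicting the assumption that both are sources; the dual argument rules out two sinks, while existence of at least one source and one sink follows from acyclicity of a finite digraph.

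I do not anticipate a genuine obstacle here, as the result is essentially a standard fact about acyclic tournaments. The only points requiring a little care are to make explicit that acyclicity passes from $G$ down to the subgraph $K_m$, and that completeness of $K_m$ guarantees its restriction is a tournament; once these are in place, the forbidden-triangle argument and the linear-order observation do all the work.
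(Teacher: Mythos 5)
Your proof is correct and complete: the paper itself states this lemma without proof, citing \cite{KLMW17}, and your argument (restriction to $K_m$ is an acyclic tournament, a directed triangle would violate acyclicity, and a transitive tournament is a strict linear order with unique minimum and maximum) is precisely the standard argument used in that reference. The only definitional point you rely on --- that semi-transitive orientations are acyclic --- is indeed part of the paper's definition, so nothing is missing.
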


\subsection{Split graphs.} A {\em split graph} is a graph in which the vertices can be partitioned into a clique and an independent set \cite{FH77}. The paper \cite{KLMW17} initiated a systematic study of word-representability of split graphs, which was extended in a follow up paper \cite{CKS19}.  In particular, characterizations of split graphs in terms of forbidden induced subgraphs were obtained in \cite{KLMW17} and  \cite{CKS19} for cliques of sizes 4 and 5, respectively. Also, a characterization of semi-transitive orientations of split graphs was obtained in \cite{KLMW17} (see below), and split graphs were used to solve a long standing problem in the theory of word-representation in \cite{CKS19}. We note though that currently a complete characterization of split graphs (e.g. in terms of forbidden subgraphs) seems to be a non-feasible problem, so a natural research direction is in understanding (non-)word-representable subclasses of split graphs.

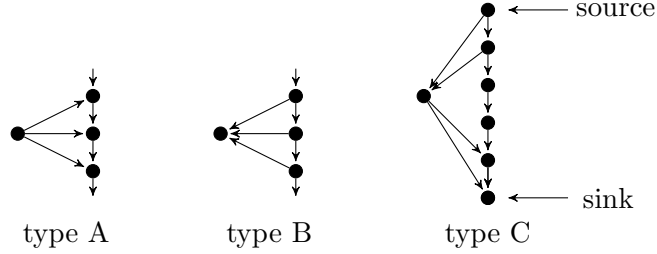
\begin{figure}
	\begin{center}
		
		\begin{tabular}{ccc}
			
			\begin{tikzpicture}[->,>=stealth', shorten >=1pt,node distance=0.5cm,auto,main node/.style={fill,circle,draw,inner sep=0pt,minimum size=5pt}]
			
			\node[main node] (1) {};
			\node[main node] (2) [right of=1,xshift=5mm] {};
			\node[main node] (3) [above of=2] {};
			\node (4) [above of=3] {};
			\node[main node] (5) [below of=2] {};
			\node (6) [below of=5] {};
			
			\node (7) [above of=4,yshift=-4mm,xshift=1mm] {};
			\node (8) [above right of=7,xshift=6mm]{};
			
			\node (10) [below of=6,yshift=4mm,xshift=1mm] {};
			\node (11) [below right of=10,xshift=6mm]{};
			
			\node (11) [below left of=5,yshift=-5mm]{type A};
			
			\path
			(4) edge (3)
			(3) edge (2)
			(2) edge (5)
			(5) edge (6);
			
			\path
			(1) edge (2)
			(1) edge (3)
			(1) edge (5);
			
			\end{tikzpicture}
			
			&
			\begin{tikzpicture}[->,>=stealth', shorten >=1pt,node distance=0.5cm,auto,main node/.style={fill,circle,draw,inner sep=0pt,minimum size=5pt}]
			
			\node[main node] (1) {};
			\node[main node] (2) [right of=1,xshift=5mm] {};
			\node[main node] (3) [above of=2] {};
			\node (4) [above of=3] {};
			\node[main node] (5) [below of=2] {};
			\node (6) [below of=5] {};
			
			\node (7) [above of=4,yshift=-4mm,xshift=1mm] {};
			\node (8) [above right of=7,xshift=6mm]{};
			
			\node (10) [below of=6,yshift=4mm,xshift=1mm] {};
			\node (11) [below right of=10,xshift=6mm]{};
			
			\node (11) [below left of=5,yshift=-5mm]{type B};
			
			\path
			(4) edge (3)
			(3) edge (2)
			(2) edge (5)
			(5) edge (6);
			
			\path
			(2) edge (1)
			(3) edge (1)
			(5) edge (1);
			
			\end{tikzpicture}
			
			&
			
			\begin{tikzpicture}[->,>=stealth', shorten >=1pt,node distance=0.5cm,auto,main node/.style={fill,circle,draw,inner sep=0pt,minimum size=5pt}]
			
			\node[main node] (1) {};
			\node[main node] (2) [below right of=1,xshift=5mm] {};
			\node[main node] (3) [above of=2] {};
			\node[main node] (4) [above of=3] {};
			\node[main node] (5) [below of=2] {};
			\node[main node] (6) [below of=5] {};
			\node[main node] (13) [above of=4] {};
			\node[main node] (14) [below of=5] {};
			
			\node (7) [above of=13,yshift=-5mm,xshift=1mm] {};
			\node (8) [right of=7,xshift=6mm]{};
			\node (9) [right of=8]{source};
			
			\node (10) [below of=14,yshift=5mm,xshift=1mm] {};
			\node (11) [right of=10,xshift=6mm]{};
			\node (12) [right of=11,xshift=-1mm]{sink};
			
			\node [below of=14]{type C};
			
			\path
			(8) edge (7)
			(11) edge (10);
			
			\path
			(13) edge (1)
			(4) edge (1)
			(1) edge (14)
			(1) edge (5)
			(4) edge (3)
			(3) edge (2)
			(2) edge (5)
			(5) edge (6)
			(13) edge (4)
			(5) edge (14);
			
			\end{tikzpicture}
			
		\end{tabular}
		
\caption{\label{3-groups} Three types of vertices in $E_{n-m}$ in a semi-transitive orientation of $(E_{n-m},K_m)$. The vertical oriented paths are a schematic way to show (parts of) $\vec{P}$}
	\end{center}
\end{figure}

Let $S_n=(E_{n-m},K_m)$ be a word-representable split graph, where $K_m$ is  the maximal clique, and $E_{n-m}$ is the independent set.  Then, by Theorem~\ref{key-thm}, $S_n$ admits a semi-transitive orientation. Further, by Lemma~\ref{lem-tran-orie} we know that any such orientation induces a transitive orientation on $K_m$ with the longest directed path $\vec{P}$. Theorem~\ref{main-orientation} below characterizes semi-transitive orientations of split graphs. 

\begin{theorem}[\cite{KLMW17}]\label{semi-tran-groups} Any semi-transitive orientation of a split graph $S_n=(E_{n-m},K_m)$ subdivides the set of all vertices in $E_{n-m}$ into three, possibly empty, groups corresponding to each of the following types (also shown schematically in Figure~\ref{3-groups}), where $\vec{P}= p_1\rightarrow\cdots\rightarrow p_m$ is the longest directed path in $K_m$: 
\begin{itemize}
\item A vertex in $E_{n-m}$ is of {\em type A} if it is a source and is connected to all vertices in $\{p_i,p_{i+1},\ldots, p_j\}$ for some $1\leq i\leq j\leq m$;
\item A vertex in $E_{n-m}$ is of {\em type B} if it is a sink and is connected to all vertices in $\{p_i,p_{i+1},\ldots, p_j\}$ for some $1\leq i\leq j\leq m$; 
\item A vertex $v\in E_{n-m}$ is of {\em type C} if there is an edge $x\rightarrow v$ for each $x\in I_v=\{p_1,p_2,\ldots, p_i\}$ and there is an edge $v\rightarrow y$ for each $y\in O_v=\{p_j,p_{j+1},\ldots, p_m\}$ for some $1\leq i< j\leq m$.
\end{itemize} 
\end{theorem}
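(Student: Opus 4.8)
The plan is to fix a semi-transitive orientation of $S_n=(E_{n-m},K_m)$ and, for each vertex $v\in E_{n-m}$, to analyze the two sets of clique-neighbours of $v$, namely the in-neighbours $I_v=\{p_k:p_k\to v\}$ and the out-neighbours $O_v=\{p_k:v\to p_k\}$. By Lemma~\ref{lem-tran-orie} the induced orientation on $K_m$ is transitive with a single source and a single sink, so $\vec P=p_1\to\cdots\to p_m$ lists \emph{all} clique vertices and $p_a\to p_b$ holds precisely when $a<b$. The three types of the statement then correspond exactly to the cases $I_v=\emptyset$ (type A), $O_v=\emptyset$ (type B), and $I_v,O_v\neq\emptyset$ (type C), so the proof becomes a case analysis once the internal structure of $I_v$ and $O_v$ is pinned down.

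First I would record a separation property coming purely from acyclicity: if $p_a\in I_v$ and $p_b\in O_v$, then $p_a\to v\to p_b$ is a directed path, and since $p_a,p_b$ lie in the clique, an edge $p_b\to p_a$ would close a directed cycle; hence $p_a\to p_b$, i.e. $a<b$. Thus every index occurring in $I_v$ is strictly smaller than every index occurring in $O_v$. Writing $i=\max\{k:p_k\in I_v\}$ and $j=\min\{k:p_k\in O_v\}$ whenever the relevant set is nonempty, this yields $i<j$, which is exactly the inequality demanded in type C and which keeps the prefix and the suffix disjoint.

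The substantive work is to upgrade this separation to the interval/prefix/suffix descriptions, and the engine throughout is the semi-transitivity condition applied to length-three directed paths whose two clique endpoints are already joined by a ``shortcut'' edge. For type C, given $a<i$ I would consider the path $p_a\to p_i\to v\to p_j$: because $p_a\to p_j$ is an edge of the clique (as $a<j$), semi-transitivity forces all intermediate edges, in particular $p_a\to v$, so $p_a\in I_v$ and hence $I_v=\{p_1,\ldots,p_i\}$ is a full prefix; symmetrically, for $c>j$ the path $p_i\to v\to p_j\to p_c$ together with the shortcut $p_i\to p_c$ forces $v\to p_c$, so $O_v=\{p_j,\ldots,p_m\}$ is a full suffix. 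For type A (where $I_v=\emptyset$, so $v$ is a source) there is no in-neighbour to anchor such an extension, and one obtains only that $O_v$ is \emph{consecutive}: if $p_a,p_c\in O_v$ with $a<b<c$, the path $v\to p_a\to p_b\to p_c$ has shortcut edge $v\to p_c$, forcing $v\to p_b$ and hence $p_b\in O_v$. Type B is the mirror image of type A obtained by reversing all edge orientations.

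The one point requiring care — and the step I expect to be the main obstacle to state cleanly — is keeping straight why the mixed case produces the stronger prefix/suffix conclusion while the pure source/sink cases yield merely an interval: the asymmetry is entirely due to whether a neighbour of the opposite orientation exists to supply the fourth vertex of the forcing path. I would also dispose of the degenerate isolated vertex $I_v=O_v=\emptyset$ separately, assigning it to type A (or B) with an empty interval, and finally verify that the chosen indices satisfy the ranges $1\le i\le j\le m$ for types A and B and $1\le i<j\le m$ for type C, as stated.
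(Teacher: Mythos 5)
This theorem is stated in the paper as an imported result (quoted from \cite{KLMW17}) and the paper itself contains no proof of it, so there is nothing internal to compare against; judging your argument on its own merits, it is correct and complete, and it is the natural argument one would expect the cited source to use. Your three structural ingredients are all sound: Lemma~\ref{lem-tran-orie} does give that the clique carries a transitive tournament, so $\vec P$ is its topological order containing all of $K_m$ with $p_a\rightarrow p_b$ iff $a<b$; the acyclicity argument correctly separates the indices of $I_v$ from those of $O_v$; and each of your shortcut-forcing applications is a legitimate use of semi-transitivity, since in every path you build ($p_a\rightarrow p_i\rightarrow v\rightarrow p_j$, $p_i\rightarrow v\rightarrow p_j\rightarrow p_c$, $v\rightarrow p_a\rightarrow p_b\rightarrow p_c$) the endpoint edge is guaranteed by the clique or by hypothesis, so the definition forces the missing edge into or out of $v$. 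Your diagnosis of the asymmetry---type C gets prefix/suffix because a neighbour of the opposite orientation supplies the fourth vertex of the forcing path, while a pure source or sink only yields consecutivity---is exactly right and is the heart of the matter.

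Two cosmetic points you should make explicit rather than gesture at. First, the reduction of type B to type A by reversing all orientations needs the (one-line) observation that the reverse of a semi-transitive orientation is semi-transitive: acyclicity is preserved, and a shortcut reverses to a shortcut; alternatively, argue type B directly via the path $p_a\rightarrow p_b\rightarrow p_c\rightarrow v$ with endpoint edge $p_a\rightarrow v$. Second, your handling of an isolated vertex of $E_{n-m}$ as ``type A with an empty interval'' does not literally fit the stated ranges $1\leq i\leq j\leq m$, which force a nonempty set $\{p_i,\ldots,p_j\}$; this is a boundary convention in the statement itself, but since you noticed it you should say explicitly that degree-zero vertices are assigned to type A (or B) vacuously, as the statement's wording ``connected to all vertices in $\{p_i,\ldots,p_j\}$'' is then interpreted with the empty set. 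Neither point affects the substance of the proof.
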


There are additional restrictions, given by the next theorem, on relative positions of the neighbours of vertices of types A, B and C. 

\begin{theorem}[\cite{KLMW17}]\label{relative-order} Let $S_n=(E_{n-m},K_m)$ be oriented semi-transitively with $\vec{P}= p_1\rightarrow\cdots\rightarrow p_m$. For a vertex $x\in E_{n-m}$ of type C,
there is no vertex $y\in E_{n-m}$ of type A or B, which is connected to both $p_{|I_x|}$ and $p_{m-|O_x|+1}$. Also, there is no vertex $y\in E_{n-m}$ of type C such that either $I_y$, or $O_y$ contains both $p_{|I_x|}$ and $p_{m-|O_x|+1}$.
\end{theorem}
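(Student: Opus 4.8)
The plan is to reduce everything to a single mechanism: a type C vertex $x$ creates a directed ``bridge'' $p_{|I_x|}\to x\to p_{m-|O_x|+1}$ through the independent set, and since any competitor $y\in E_{n-m}$ is non-adjacent to $x$, prepending or appending $y$ to this bridge yields a directed path of length three whose endpoints are adjacent but whose forced shortcut through $x$ is exactly the missing edge $xy$.

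First I would fix notation: write $I_x=\{p_1,\dots,p_i\}$ and $O_x=\{p_j,\dots,p_m\}$, so that $|I_x|=i$, $m-|O_x|+1=j$, and the type C condition gives $i<j$; hence $p_{|I_x|}=p_i$ and $p_{m-|O_x|+1}=p_j$. From the definition of type C we have the edges $p_i\to x$ and $x\to p_j$, and since $i<j$ and $\vec{P}$ is the transitive orientation of the clique $K_m$, we also have the clique edge $p_i\to p_j$. Finally, because both $x$ and $y$ lie in the independent set $E_{n-m}$, there is no edge between them at all; this non-edge is the fact I will contradict in every case, using semi-transitivity (Theorem~\ref{key-thm}).

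Then I would carry out the case analysis on $y$, each case amounting to one length-three directed path. If $y$ is of type A (a source) adjacent to both $p_i$ and $p_j$, then $y\to p_i$ and $y\to p_j$, so $y\to p_i\to x\to p_j$ is a directed path whose endpoints $y,p_j$ are joined by $y\to p_j$; semi-transitivity forces the shortcut $y\to x$, contradicting the non-edge $xy$. If $y$ is of type B (a sink) adjacent to both, then $p_i\to y$ and $p_j\to y$, and $p_i\to x\to p_j\to y$ has endpoints joined by $p_i\to y$, forcing $x\to y$, again a contradiction. If $y$ is of type C with $\{p_i,p_j\}\subseteq I_y$, then $p_i\to y$ and $p_j\to y$, so the path $p_i\to x\to p_j\to y$ of the previous case applies; and if $y$ is of type C with $\{p_i,p_j\}\subseteq O_y$, then $y\to p_i$ and $y\to p_j$, so the path $y\to p_i\to x\to p_j$ of the type A case applies. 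In each case one checks that all four vertices are distinct and that every edge of the path other than the forced $xy$ shortcut genuinely exists (the clique edge from $\vec{P}$, the type C edges at $x$, and the source/sink/type C edges at $y$).

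The bulk of the work is bookkeeping rather than insight: the one place to be careful is choosing, in each case, which end of the bridge to attach $y$ to so that the unique missing shortcut is precisely $xy$ and not a clique or incidence edge that does exist. This is governed entirely by whether $y$'s edges to $p_i,p_j$ point outward (type A, or type C via $O_y$) or inward (type B, or type C via $I_y$): outward edges place $y$ at the tail of the path before $p_i$, inward edges place $y$ at the head after $p_j$. Once that matching is set, the contradiction is immediate, and no structural facts about types A, B, C beyond adjacency and edge direction are needed.
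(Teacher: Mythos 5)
Your proof is correct: the bridge $p_{|I_x|}\rightarrow x\rightarrow p_{m-|O_x|+1}$ plus the appropriately attached $y$ yields in each case a directed path of length three whose endpoints are adjacent, so semi-transitivity forces the shortcut $xy$, contradicting that $x,y$ both lie in the independent set. The paper itself states this theorem as a cited result from \cite{KLMW17} without reproducing a proof, and your argument is exactly the standard shortcut argument used there, with the case bookkeeping (outward edges put $y$ at the tail, inward edges at the head) handled correctly.
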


One can now classify semi-transitive orientations on split graphs.

\begin{theorem}[\cite{KLMW17}]\label{main-orientation} An orientation of a split graph $S_n=(E_{n-m},K_m)$ is semi-transitive if and only if 
\begin{itemize} 
\item[{\em (i)}] $K_m$ is oriented transitively;
\item[{\em (ii)}] each vertex in $E_{n-m}$ is of one of the three types in Theorem~\ref{semi-tran-groups};
\item[{\em (iii)}] the restrictions in Theorem~\ref{relative-order} are satisfied. 
\end{itemize}\end{theorem}

\subsection{Split graphs generated by morphisms} Let $A$ and $B$ be alphabets (possibly $A=B$). A map $\varphi: A^* \rightarrow B^*$ is called a {\em morphism}, if
we have $\varphi(uv)=\varphi(u)\varphi(v)$ for any $u,v \in A^*$. A
morphism $\varphi$ can be defined by defining $\varphi(a)$ for each
$a \in A$. A particular property of a morphism $\varphi$ is that
$\varphi(\varepsilon)=\varepsilon$, where $\varepsilon$ is the empty word. Morphisms are a central object in the area of combinatorics on words \cite{Loth}, and there is a natural extension of the notion to two, or more, dimensions. Indeed, one can begin with a matrix $M$ whose entries are elements of $A$, and then obtain $\varphi(M)$ by substituting each element in $M$ by matrices having the same dimensions and given by some substitution rules.

Relevance of (2-dimensional) morphisms to split graphs is coming through the ideas communicated in \cite{WRofToeplitz}, where patterns in adjacency matrices are considered to study word-representability of graphs, and the notion of an infinite word-representable graph is introduced. In this paper, we study word-representability of families of split graphs defined by iteration of morphisms, and in particular, we give a complete classification in the case of $2\times 2$ matrices. However, a key theorem we prove that characterizes word-representability of split graphs in terms of permutations of columns in the adjacency matrix is applicable to {\em any} split graph (see Theorem~\ref{Thm-genpermutecolumn}). 

\subsection{Organization of the paper} After introducing a number of preliminary results and examples in Section~\ref{prelim-sec}, that includes our key result (Theorem~\ref{Thm-genpermutecolumn}),  in Section~\ref{general-sec} we present a number of general results on split graphs generated by morphisms. In Section~\ref{class-2x2-sec} we provide a complete classification of word-representable split graphs defined by iteration of morphisms using two $2\times 2$ matrices; our results in this section are summarized in Tables~\ref{tab:2x2-1} and~\ref{tab:2x2-2}. Finally, in Section~\ref{final-sec} we state a number of open research questions.

\begin{center}
	\begin{table}
	{\tiny
	\begin{tabular}{|c|c|c|c|c||c|c|c|c|c|} 
		\hline
		Case&$A$&$B$&$\IWR$$(A,B)$&Ref.&Case&$A$&$B$&$\IWR$$(A,B)$& Ref.\\
		\hline\hline
		1&$\begin{bmatrix} 0&0\\0&0\end{bmatrix}$ &
		$\begin{bmatrix} *&*\\ *&*\end{bmatrix}$& $\infty$&  $A=0$ &
		&&&&  \\
		\hline \hline
		2&$\begin{bmatrix} 1&0\\0&0\end{bmatrix}$ &
		$\begin{bmatrix} 0&0\\ 0&0\end{bmatrix}$& $\infty$ &Prop.~\ref{ABall0all1}  &
		18&$\begin{bmatrix} 0&1\\0&0\end{bmatrix}$ &
		$\begin{bmatrix} 0&0\\0&0\end{bmatrix}$& $\infty$& Prop.~\ref{ABall0all1} \\
		\hline
		3&&$\begin{bmatrix} 1&0\\ 0&0\end{bmatrix}$& $\infty$ &  Cor.~\ref{A=B22} &
		19&&$\begin{bmatrix} 1&0\\0&0\end{bmatrix}$& 3 & Rem.~\ref{REMindex=3} \\
		\hline
		4&&$\begin{bmatrix} 0&1\\ 0&0\end{bmatrix}$& 3 & Pro.~\ref{caseindex=3} &
		20&&$\begin{bmatrix} 0&1\\0&0\end{bmatrix}$& $\infty$&  Cor.~\ref{A=B22}\\
		\hline
		5&&$\begin{bmatrix} 0&0\\ 1&0\end{bmatrix}$& 3 & Rem.~\ref{REMindex=3} &
		21&&$\begin{bmatrix} 0&0\\1&0\end{bmatrix}$& 4 & Rem.~\ref{REMindex=4} \\
		\hline		
		6&&$\begin{bmatrix} 0&0\\ 0&1\end{bmatrix}$& 4 & Prop.~\ref{caseindex=4} &
		22&&$\begin{bmatrix} 0&0\\0&1\end{bmatrix}$& 3& Rem.~\ref{REMindex=3} \\
		\hline		
		7&&$\begin{bmatrix} 1&1\\ 0&0\end{bmatrix}$& $\infty$ & Rem~\ref{Remindex=inf} &
		23&&$\begin{bmatrix} 1&1\\0&0\end{bmatrix}$& $\infty$ & Rem~\ref{Remindex=inf} \\
		\hline
		8&&$\begin{bmatrix} 1&0\\ 1&0\end{bmatrix}$& $\infty$ & Rem~\ref{Remindex=inf}  &
		24&&$\begin{bmatrix} 1&0\\1&0\end{bmatrix}$& 3 &  Rem.~\ref{REMindex=3} \\
		\hline
		9&&$\begin{bmatrix} 1&0\\ 0&1\end{bmatrix}$& $\infty$ & Prop~\ref{case9}   &
		25&&$\begin{bmatrix} 1&0\\0&1\end{bmatrix}$& 3 & Rem.~\ref{REMindex=3} \\
		\hline
		10&&$\begin{bmatrix} 0&1\\ 1&0\end{bmatrix}$& 3 &Rem.~\ref{REMindex=3} &
		26&&$\begin{bmatrix} 0&1\\1&0\end{bmatrix}$& $\infty$ & Rem~\ref{case9-26-41} \\
		\hline
		11&&$\begin{bmatrix} 0&1\\ 0&1\end{bmatrix}$& 3 & Rem.~\ref{REMindex=3} &
		27&&$\begin{bmatrix} 0&1\\0&1 \end{bmatrix}$& $\infty$& Rem~\ref{Remindex=inf} \\
		\hline
		12&&$\begin{bmatrix} 0&0\\ 1&1\end{bmatrix}$& 3 & Rem.~\ref{REMindex=3} &
		28&&$\begin{bmatrix} 0&0\\1&1\end{bmatrix}$& 3 & Rem.~\ref{REMindex=3} \\
		\hline
		13&&$\begin{bmatrix} 1&1\\ 1&0\end{bmatrix}$& $\infty$ & Rem~\ref{Remindex=inf}  &
		29&&$\begin{bmatrix} 1&1\\1&0\end{bmatrix}$& 3 & Rem.~\ref{REMindex=3} \\
		\hline
		14&&$\begin{bmatrix} 1&1\\ 0&1\end{bmatrix}$& 3 & Rem.~\ref{REMindex=3} &
		30&&$\begin{bmatrix} 1&1\\0&1\end{bmatrix}$& $\infty$ & Rem~\ref{Remindex=inf} \\
		\hline
		15&&$\begin{bmatrix} 1&0\\ 1&1\end{bmatrix}$& 3 & Rem.~\ref{REMindex=3}&
		31&&$\begin{bmatrix} 1&0\\1&1\end{bmatrix}$& 2 & Rem.~\ref{REMindex=2} \\ 
		\hline
		16&&$\begin{bmatrix} 0&1\\ 1&1\end{bmatrix}$& 2 & Prop.~\ref{caseindex=2} &
		32&&$\begin{bmatrix} 0&1\\1&1\end{bmatrix}$& 3 & Rem.~\ref{REMindex=3}\\
		\hline
		17&&$\begin{bmatrix} 1&1\\ 1&1\end{bmatrix}$& 3 &Rem.~\ref{REMindex=3} &
		33&&$\begin{bmatrix} 1&1\\1&1\end{bmatrix}$& 3 & Rem.~\ref{REMindex=3} \\
		\hline\hline
		34&$\begin{bmatrix} 0&0\\0&1\end{bmatrix}$ &
		$\begin{bmatrix} 0&0\\ 0&0\end{bmatrix}$& $\infty$& Prop.~\ref{ABall0all1} &
		50&$\begin{bmatrix} 1&1\\0&0\end{bmatrix}$ &
		$\begin{bmatrix} 0&0\\0&0\end{bmatrix}$& $\infty $&  Prop.~\ref{ABall0all1} \\
		\hline
		35&&$\begin{bmatrix} 1&0\\ 0&0\end{bmatrix}$& 4&  Rem.~\ref{REMindex=4} &
		51&&$\begin{bmatrix} 1&0\\0&0\end{bmatrix}$& $\infty$& Rem~\ref{Remindex=inf} \\
		\hline
		36&&$\begin{bmatrix} 0&1\\ 0&0\end{bmatrix}$& 3 & Rem.~\ref{REMindex=3} &
		52&&$\begin{bmatrix} 0&1\\0&0\end{bmatrix}$& $\infty$& Rem~\ref{Remindex=inf} \\
		\hline
		37&&$\begin{bmatrix} 0&0\\ 1&0\end{bmatrix}$& 3 & Rem.~\ref{REMindex=3} &
		53&&$\begin{bmatrix} 0&0\\1&0\end{bmatrix}$& 4 &Rem.~\ref{REMindex=4}   \\
		\hline		
		38&&$\begin{bmatrix} 0&0\\ 0&1\end{bmatrix}$& $\infty$ &  Cor.~\ref{A=B22} &
		54&&$\begin{bmatrix} 0&0\\0&1\end{bmatrix}$& 4 &  Rem.~\ref{REMindex=4} \\
		\hline		
		39&&$\begin{bmatrix} 1&1\\ 0&0\end{bmatrix}$& 3 &  Rem.~\ref{REMindex=3} &
		55&&$\begin{bmatrix} 1&1\\0&0\end{bmatrix}$ & $\infty$ &   Cor.~\ref{A=B22}\\
		\hline
		40&&$\begin{bmatrix} 1&0\\ 1&0\end{bmatrix}$& 3&  Rem.~\ref{REMindex=3} &
		56&&$\begin{bmatrix} 1&0\\1&0\end{bmatrix}$& $\infty$ & Rem~\ref{Remindex=inf}  \\
		\hline
		41&&$\begin{bmatrix} 1&0\\ 0&1\end{bmatrix}$& $\infty$ & Rem~\ref{case9-26-41} &
		57&&$\begin{bmatrix} 1&0\\0&1\end{bmatrix}$& 3 &Rem.~\ref{REMindex=3} \\
		\hline
		42&&$\begin{bmatrix} 0&1\\ 1&0\end{bmatrix}$& 3 &  Rem.~\ref{REMindex=3} &
		58&&$\begin{bmatrix} 0&1\\1&0\end{bmatrix}$& 3 & Rem.~\ref{REMindex=3}\\
		\hline
		43&&$\begin{bmatrix} 0&1\\ 0&1\end{bmatrix}$& $\infty$ & Rem~\ref{Remindex=inf}  &
		59&&$\begin{bmatrix} 0&1\\0&1 \end{bmatrix}$& $\infty$ & Rem~\ref{Remindex=inf} \\
		\hline
		44&&$\begin{bmatrix} 0&0\\ 1&1\end{bmatrix}$& $\infty$ & Rem~\ref{Remindex=inf}  &
		60&&$\begin{bmatrix} 0&0\\1&1\end{bmatrix}$& $\infty$ &  Prop. \ref{ABall0all1} \\
		\hline
		45&&$\begin{bmatrix} 1&1\\ 1&0\end{bmatrix}$& 2 & Rem.~\ref{REMindex=2} &
		61&&$\begin{bmatrix} 1&1\\1&0\end{bmatrix}$& $\infty$ & Rem~\ref{Remindex=inf}  \\
		\hline
		46&&$\begin{bmatrix} 1&1\\ 0&1\end{bmatrix}$& 3 & Rem.~\ref{REMindex=3} &
		62&&$\begin{bmatrix} 1&1\\0&1\end{bmatrix}$& $\infty$ & Rem~\ref{Remindex=inf} \\
		\hline
		47&&$\begin{bmatrix} 1&0\\ 1&1\end{bmatrix}$& 3 &  Rem.~\ref{REMindex=3}  &
		63&&$\begin{bmatrix} 1&0\\1&1\end{bmatrix}$& 4 &Rem.~\ref{REMindex=4} \\
		\hline
		48&&$\begin{bmatrix} 0&1\\ 1&1\end{bmatrix}$& $\infty$ & Rem~\ref{Remindex=inf} &
		64&&$\begin{bmatrix} 0&1\\1&1\end{bmatrix}$& 4 & Rem.~\ref{REMindex=4} \\
		\hline
		49&&$\begin{bmatrix} 1&1\\ 1&1\end{bmatrix}$& 3 & Rem.~\ref{REMindex=3} &
		65&&$\begin{bmatrix} 1&1\\1&1\end{bmatrix}$& $\infty$ &  Prop. \ref{ABall0all1} \\
		\hline
			\end{tabular} }
		\caption{The index of word-representability of infinite split graphs $G(A,B)$ for $2 \times 2$ matrices $A$ and $B$.}\label{tab:2x2-1} 
		\end{table}

\end{center}

\begin{center}
	\begin{table}
	{\tiny
		\begin{tabular}{|c|c|c|c|c||c|c|c|c|c|}
			\hline
			Case&A&B&$\IWR$$(A,B)$&Ref.&Case&
			A&B&$\IWR$$(A,B)$&Ref.\\
			\hline\hline
			66&$\begin{bmatrix} 1&0\\0&1\end{bmatrix}$ &
			$\begin{bmatrix} 0&0\\ 0&0\end{bmatrix}$& 3& Rem.~\ref{REMindex=3} &
			82&$\begin{bmatrix} 0&1\\0&1\end{bmatrix}$ &
			$\begin{bmatrix} 0&0\\0&0\end{bmatrix}$& $\infty$ &  Prop. \ref{ABall0all1} \\
			\hline
			67&&$\begin{bmatrix} 1&0\\ 0&0\end{bmatrix}$& 3 & Rem.~\ref{REMindex=3} &
			83&&$\begin{bmatrix} 1&0\\0&0\end{bmatrix}$& 4 & Rem.~\ref{REMindex=4}  \\
			\hline
			68&&$\begin{bmatrix} 0&1\\ 0&0\end{bmatrix}$& 3 & Rem.~\ref{REMindex=3} &
			84&&$\begin{bmatrix} 0&1\\0&0\end{bmatrix}$& $\infty$ & Rem~\ref{Remindex=inf} \\
			\hline
			69&&$\begin{bmatrix} 0&0\\ 1&0\end{bmatrix}$& 3 &Rem.~\ref{REMindex=3} &
			85&&$\begin{bmatrix} 0&0\\1&0\end{bmatrix}$& 4 & Rem.~\ref{REMindex=4} \\
			\hline		
			70&&$\begin{bmatrix} 0&0\\ 0&1\end{bmatrix}$& 3 & Rem.~\ref{REMindex=3} &
			86&&$\begin{bmatrix} 0&0\\0&1\end{bmatrix}$& $\infty$ & Rem~\ref{Remindex=inf} \\
			\hline		
			71&&$\begin{bmatrix} 1&1\\ 0&0\end{bmatrix}$& 3 &Rem.~\ref{REMindex=3} &
			87&&$\begin{bmatrix} 1&1\\0&0\end{bmatrix}$& 4 &Rem.~\ref{REMindex=4} \\
			\hline
			72&&$\begin{bmatrix} 1&0\\ 1&0\end{bmatrix}$& 3 &Rem.~\ref{REMindex=3} &
			88&&$\begin{bmatrix} 1&0\\1&0\end{bmatrix}$& $\infty$ & Prop. \ref{ABall0all1} \\
			\hline
			73&&$\begin{bmatrix} 1&0\\ 0&1\end{bmatrix}$& $\infty$ &  Cor.~\ref{A=B22} &
			89&&$\begin{bmatrix} 1&0\\0&1\end{bmatrix}$& 3 & Rem.~\ref{REMindex=3} \\
			\hline
			74&&$\begin{bmatrix} 0&1\\ 1&0\end{bmatrix}$& $\infty$ & Prop~\ref{case74}  &
			90&&$\begin{bmatrix} 0&1\\1&0\end{bmatrix}$& 3 &Rem.~\ref{REMindex=3} \\
			\hline
			75&&$\begin{bmatrix} 0&1\\ 0&1\end{bmatrix}$& 3 &Rem.~\ref{REMindex=3} &
			91&&$\begin{bmatrix} 0&1\\0&1 \end{bmatrix}$& $\infty$ &  Cor.~\ref{A=B22}\\
			\hline
			76&&$\begin{bmatrix} 0&0\\ 1&1\end{bmatrix}$& 3 & Rem.~\ref{REMindex=3} &
			92&&$\begin{bmatrix} 0&0\\1&1\end{bmatrix}$& 4 & Rem.~\ref{REMindex=4} \\
			\hline
			77&&$\begin{bmatrix} 1&1\\ 1&0\end{bmatrix}$& 3 & Rem.~\ref{REMindex=3} &
			93&&$\begin{bmatrix} 1&1\\1&0\end{bmatrix}$& 4 &Rem.~\ref{REMindex=4}  \\
			\hline
			78&&$\begin{bmatrix} 1&1\\ 0&1\end{bmatrix}$& 3 & Rem.~\ref{REMindex=3} &
			94&&$\begin{bmatrix} 1&1\\0&1\end{bmatrix}$& $\infty$& Rem~\ref{Remindex=inf} \\
			\hline
			79&&$\begin{bmatrix} 1&0\\ 1&1\end{bmatrix}$& 3 &Rem.~\ref{REMindex=3} &
			95&&$\begin{bmatrix} 1&0\\1&1\end{bmatrix}$& 4 & Rem.~\ref{REMindex=4} \\
			\hline
			80&&$\begin{bmatrix} 0&1\\ 1&1\end{bmatrix}$& 3 &Rem.~\ref{REMindex=3} &
			96&&$\begin{bmatrix} 0&1\\1&1\end{bmatrix}$& $\infty$ &   case 94 \\
			\hline
			81&&$\begin{bmatrix} 1&1\\ 1&1\end{bmatrix}$& 2 & Rem.~\ref{REMindex=2}&
			97&&$\begin{bmatrix} 1&1\\1&1\end{bmatrix}$& $\infty$ & Prop. \ref{ABall0all1} \\
			\hline\hline
			98&$\begin{bmatrix} 1&1\\0&1\end{bmatrix}$ &
			$\begin{bmatrix} 0&0\\ 0&0\end{bmatrix}$& 4 &Rem.~\ref{REMindex=4}  &
			114&$\begin{bmatrix} 1&1\\1&1\end{bmatrix}$&$\begin{bmatrix}0&0\\0&0\end{bmatrix}$&$\infty$& Prop. \ref{ABall0all1}  \\
			\hline
			99&&$\begin{bmatrix} 1&0\\ 0&0\end{bmatrix}$& 3&Rem.~\ref{REMindex=3} &
			115&&$\begin{bmatrix} 1&0\\ 0&0\end{bmatrix}$&5& Rem.~\ref{REMcase115-116-118}  \\
			\hline
			100&&$\begin{bmatrix} 0&1\\ 0&0\end{bmatrix}$&  $\infty$ & Rem~\ref{Remindex=inf}  &
			116&&$\begin{bmatrix} 0&1\\ 0&0\end{bmatrix}$&5& Rem.~\ref{REMcase115-116-118} \\
			\hline
			101&&$\begin{bmatrix} 0&0\\ 1&0\end{bmatrix}$& 2 & Rem.~\ref{REMindex=2} &
			117&&$\begin{bmatrix} 0&0\\ 1&0\end{bmatrix}$&5& Prop.~\ref{Propcase117} \\
			\hline		
			102&&$\begin{bmatrix} 0&0\\ 0&1\end{bmatrix}$& 3&Rem.~\ref{REMindex=3} &
			118&&$\begin{bmatrix} 0&0\\ 0&1\end{bmatrix}$&5& Rem.~\ref{REMcase115-116-118}  \\
			\hline		
			103&&$\begin{bmatrix} 1&1\\ 0&0\end{bmatrix}$& $\infty$ & Rem~\ref{Remindex=inf}  &
			119&&$\begin{bmatrix} 1&1\\ 0&0\end{bmatrix}$&$\infty$& Prop. \ref{ABall0all1} \\
			\hline
			104&&$\begin{bmatrix} 1&0\\ 1&0\end{bmatrix}$& 3 & Rem.~\ref{REMindex=3} &
			120&&$\begin{bmatrix} 1&0\\ 1&0\end{bmatrix}$&$\infty$& Prop. \ref{ABall0all1}   \\
			\hline
			105&&$\begin{bmatrix} 1&0\\ 0&1\end{bmatrix}$& 4 &Rem.~\ref{REMindex=4} &
			121&&$\begin{bmatrix} 1&0\\ 0&1\end{bmatrix}$& 3 & Prop.~\ref{Propcase121} \\
			\hline
			106&&$\begin{bmatrix} 0&1\\ 1&0\end{bmatrix}$& 3 &Rem.~\ref{REMindex=3} &
			122&&$\begin{bmatrix} 0&1\\ 1&0\end{bmatrix}$& 3 & Rem.~\ref{REMcase122} \\
			\hline
			107&&$\begin{bmatrix} 0&1\\ 0&1\end{bmatrix}$& $\infty$ & Rem~\ref{Remindex=inf} &
			123&&$\begin{bmatrix} 0&1\\ 0&1\end{bmatrix}$&$\infty$& Prop. \ref{ABall0all1}   \\
			\hline
			108&&$\begin{bmatrix} 0&0\\ 1&1\end{bmatrix}$& 3 & Rem.~\ref{REMindex=3} &
			124&&$\begin{bmatrix} 0&0\\ 1&1\end{bmatrix}$&$\infty$&Prop. \ref{ABall0all1}   \\
			\hline
			109&&$\begin{bmatrix} 1&1\\ 1&0\end{bmatrix}$& 3 & Rem.~\ref{REMindex=3}&
			125&&$\begin{bmatrix} 1&1\\ 1&0\end{bmatrix}$&$\infty$& Thm.~\ref{Thm100101->WR}  \\
			\hline
			110&&$\begin{bmatrix} 1&1\\ 0&1\end{bmatrix}$& $\infty$ &  Cor.~\ref{A=B22} &
			126&&$\begin{bmatrix} 1&1\\ 0&1\end{bmatrix}$&$\infty$& Thm.~\ref{Thm100101->WR}   \\
			\hline
			111&&$\begin{bmatrix} 1&0\\ 1&1\end{bmatrix}$& 3 & Rem.~\ref{REMindex=3}&
			127&&$\begin{bmatrix} 1&0\\ 1&1\end{bmatrix}$&$\infty$& Thm.~\ref{Thm100101->WR}   \\
			\hline
			112&&$\begin{bmatrix} 0&1\\ 1&1\end{bmatrix}$& 3 &Rem.~\ref{REMindex=3} &
			128&&$\begin{bmatrix} 0&1\\ 1&1\end{bmatrix}$&$\infty$& Thm.~\ref{Thm100101->WR}   \\
			\hline
			113&&$\begin{bmatrix} 1&1\\ 1&1\end{bmatrix}$& $\infty$ & Rem~\ref{Remindex=inf} &
			129&&$\begin{bmatrix} 1&1\\ 1&1\end{bmatrix}$&$\infty$& Prop. \ref{ABall0all1}  \\
			\hline
		\end{tabular}
	}
	\caption{ The remaining cases of the index of word-representability of infinite split graphs $G(A,B)$ for $2 \times 2$ matrices $A$ and $B$. }\label{tab:2x2-2}
	\end{table}

\end{center}

\section{Preliminaries}\label{prelim-sec}

\begin{definition}
		Let $M$ be a binary $m \times n$ matrix. Define $S(M)$ to be the matrix
		\[
		\left[ {\begin{array}{cc}
			L_n & M^T\\
			M & O_m\\
			\end{array} } \right]
		\]
where $O_m$ is the $m \times m$ zero matrix and $L_n$ is the $n \times n$ matrix such that all diagonal entries are $0$'s and all other entries are $1$'s.
\end{definition}

It is easy to see that for any binary $m \times n$ matrix $M$, $S(M)$ is the adjacency matrix of a split graph with the maximal clique of order $n$ or $n+1$. We denote the split graph by $G(M)$. Clearly, $M$ gives edges between the clique and independent set in $G(M)$,  and the order of the maximal clique depends on the existence of a $11\cdots1$ row in $M$.

\begin{example} \label{ExM} {\em 
	If $M=\left[ {\begin{array}{cccc}
		1&0&1&0\\
		0&1&1&0\\
		1&0&0&0\\
		0&0&0&1
		\end{array} } \right]$ then $$S(M)=\left[ {\begin{array}{cccccccc}
		0&1&1&1&1&0&1&0\\
		1&0&1&1&0&1&0&0\\
		1&1&0&1&1&1&0&0\\
		1&1&1&0&0&0&0&1\\
		1&0&1&0&0&0&0&0\\
		0&1&1&0&0&0&0&0\\
		1&0&0&0&0&0&0&0\\
		0&0&0&1&0&0&0&0
		\end{array} } \right]$$
is the adjacency matrix of the graph shown in Figure~\ref{ExampleSM}.
}
\end{example}

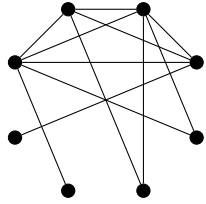
\begin{figure} 
\begin{center}

\begin{tikzpicture}[node distance=1cm,auto,main node/.style={fill,circle,draw,inner sep=0pt,minimum size=5pt}]

\node[main node] (1) {};
\node[main node] (2) [above right of=1] {};
\node[main node] (3) [right of=2] {};
\node[main node] (4) [below right of=3] {};
\node[main node] (5) [below of=4] {};
\node[main node] (6) [below left of=5] {};
\node[main node] (7) [left of=6] {};
\node[main node] (8) [above left of=7] {};

\path
(1) edge (2)
(1) edge (3)
(1) edge (4)
(2) edge (3)
(2) edge (4)
(3) edge (4)
(1) edge (5)
(1) edge (7)
(2) edge (6)
(3) edge (5)
(3) edge (6)
(4) edge (8);
\end{tikzpicture}

\caption{\label{ExampleSM}  The split graph $G(M)$ given by $S(M)$ in Example \ref{ExM}.}
\end{center}
\end{figure}

\begin{remark}\label{M=0}
If $M$ is a zero matrix, then $G(M)$ is  a disjoint union of a clique and isolated vertices and  is  word-representable, because the clique is semi-transitively (in fact, transitively) orientable and there are no other edges in $G(M)$. 
\end{remark}

The following lemma is Lemma 8 in \cite{KLMW17}.

\begin{lemma}[\cite{KLMW17}]\label{lem-neighb}
	Let $S_n=(E_{n-m},K_m)$ be a split graph with the maximum clique $K_m$, and a spit graph $S_{n+1}$ is obtained from $S_n$ by either adding a vertex of degree $0$ (to $E_{n-m}$), or adding a vertex of degree $1$ (to $E_{n-m}$), or by ``coping'' a vertex (either in $E_{n-m}$ or in $K_m$), that is, by adding a vertex whose neighbourhood is identical to the neighbourhood of a vertex in $S_n$. Then $S_n$ is word-representable if and only if $S_{n+1}$ is word-representable.
\end{lemma}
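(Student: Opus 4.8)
The plan is to prove the two implications separately, using Theorem~\ref{key-thm} to move between word-representability and the existence of a semi-transitive orientation. The backward direction is uniform across all three operations: $S_n$ is exactly $S_{n+1}$ with the newly added vertex deleted, so $S_n$ is an induced subgraph of $S_{n+1}$. Since the class of word-representable graphs is hereditary, if $S_{n+1}$ is word-representable then so is $S_n$.

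For the forward direction I would start from a semi-transitive orientation $O$ of $S_n$ (which exists by Theorem~\ref{key-thm}), extend it to an orientation $O'$ of $S_{n+1}$, and then check that $O'$ is again acyclic and shortcut-free, so that Theorem~\ref{key-thm} yields word-representability of $S_{n+1}$. The degree-$0$ case is immediate: the new vertex is isolated, $O'=O$ carries no new edges and creates no new directed path, so it stays semi-transitive. In the degree-$1$ case, let $u$ be the unique neighbour of the new vertex $v$ and orient the single edge either way. Because $v$ has only one neighbour, it cannot be an interior vertex of any directed path, so it occurs only as an endpoint; the only directed path through $v$ whose two endpoints are adjacent is the edge $uv$ itself, so no cycle and no shortcut is created.

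The substantive case is copying. Let $v'$ be added with $N(v')=N(v)$ for an existing vertex $v$; note that $v\notin N(v)=N(v')$, so $v$ and $v'$ are non-adjacent (false twins). I orient the edges at $v'$ to mirror those at $v$: for each $u\in N(v)$, put $u\to v'$ in $O'$ exactly when $u\to v$ in $O$, and $v'\to u$ exactly when $v\to u$. The key observation is that no directed path of $O'$ can contain both $v$ and $v'$: if one did, with $v$ preceding $v'$, let $a$ be the successor of $v$ and $b$ the predecessor of $v'$ on the path; then $v\to a$, while $b\to v'$ forces $b\to v$, so the segment $v\to a\to\cdots\to b$ together with $b\to v$ would close a directed cycle, contradicting acyclicity of $O$. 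Consequently the substitution $v'\mapsto v$ carries every directed path of $O'$ to a directed path of $O$, preserving adjacencies, non-adjacencies and edge directions since $v$ and $v'$ have identical oriented neighbourhoods. Hence any directed cycle or any shortcut in $O'$ would project to a directed cycle or shortcut in $O$, none of which exist; so $O'$ is semi-transitive. If instead one reads ``coping'' as producing a true twin, one additionally orients the $v$--$v'$ edge so that $v$ and $v'$ are consecutive in the transitive order on the clique they share, and the same projection argument applies.

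The main obstacle is exactly this verification in the copying case, namely ruling out that mirroring the orientation introduces a new shortcut. The clean route around it is the observation above that a vertex and its copy never lie on a common directed path, which lets one reduce every offending configuration in $S_{n+1}$ to one already present in the semi-transitive orientation of $S_n$.
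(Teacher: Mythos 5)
The paper does not actually prove this statement: it is imported verbatim as Lemma~8 of \cite{KLMW17}, so there is no in-paper proof to compare against. Judged on its own, your proof is essentially correct and self-contained, which is more than the paper offers. The backward direction via heredity is exactly right, the degree-$0$ and degree-$1$ cases are handled correctly, and your treatment of the false-twin copy is sound: mirroring the orientation at $v'$, proving that no directed path of $O'$ contains both $v$ and $v'$ (via the cycle $v\to a\to\cdots\to b\to v$ that would otherwise arise in $O$), and then projecting $v'\mapsto v$ does carry every cycle or shortcut of $O'$ to one of $O$, since adjacency, non-adjacency and edge directions are all preserved by the projection.

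The one genuine weak spot is the true-twin case, which you dispatch in a single sentence but which is actually needed for the paper's applications: when a clique vertex of $K_m$ is copied (as in Theorem~\ref{GKAA}), the copy $v'$ satisfies $N[v']=N[v]$ and is \emph{adjacent} to $v$. There your key claim fails literally: the edge $v\to v'$ is itself a directed path containing both vertices, so paths through both do exist, and the substitution $v'\mapsto v$ applied to such a path does not yield a path (it repeats $v$). The statement ``the same projection argument applies'' is therefore not true as written. The repair is close to what you gesture at: with $v\to v'$ oriented and all other edges mirrored, one shows that $v$ and $v'$ can occur on a common directed path only \emph{consecutively} (otherwise the mirroring again closes a cycle in $O$), and for such paths one must \emph{contract} the pair $v\to v'$ to the single vertex $v$ rather than relabel; one then checks that the contracted path is still a path in $O$, that the endpoint edge survives, and that any missing pair survives (a missing pair cannot be $\{v,v'\}$ since they are adjacent). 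This is a routine but nontrivial modification, and as submitted your proof has a gap exactly there.
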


To analyze word-representability of a given split graph, by Lemma~\ref{lem-neighb} we can delete vertices of degree 0 and vertices of degree 1, as well as delete all but one vertex having the same neighborhood.

\begin{proposition}\label{Mall0all1}
Let $M$ be an $m\times n$ matrix. If every row, or every column of $M$ is of the form $00\cdots 0$ or $11\cdots 1$, then $G(M)$ is word-representable.
\end{proposition}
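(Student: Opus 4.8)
The plan is to treat the two hypotheses (``every row'' versus ``every column'' is constant) separately, in each case reducing $G(M)$ to a graph whose word-representability is immediate, using Lemma~\ref{lem-neighb} together with the orientation characterization of Theorem~\ref{main-orientation}. Throughout I label the clique vertices of $G(M)$ by the columns $c_1,\dots,c_n$ of $M$ and the independent-set vertices by the rows $r_1,\dots,r_m$, so that $r_i$ is adjacent to $c_j$ exactly when $M_{ij}=1$.

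First suppose every row of $M$ is $00\cdots0$ or $11\cdots1$. Then a vertex $r_i$ coming from an all-$0$ row has degree $0$, and a vertex coming from an all-$1$ row is adjacent to the whole clique $\{c_1,\dots,c_n\}$. By Lemma~\ref{lem-neighb} I delete all the degree-$0$ vertices; the remaining row-vertices then all have the identical neighbourhood $\{c_1,\dots,c_n\}$, so again by Lemma~\ref{lem-neighb} I delete all but one of them. What is left is $K_n$ together with a single vertex joined to every clique vertex, i.e. the clique $K_{n+1}$, which is word-representable (it is transitively, hence semi-transitively, orientable). Since each deletion preserves word-representability, $G(M)$ is word-representable.

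Now suppose every column of $M$ is $00\cdots0$ or $11\cdots1$. The key observation is that this forces every row of $M$ to be the same binary string, namely the indicator of the all-$1$ columns, so all the row-vertices $r_1,\dots,r_m$ share one and the same neighbourhood $S=\{c_j : \text{column } j \text{ of } M \text{ is } 11\cdots1\}$. By Lemma~\ref{lem-neighb} I delete all but one such vertex $v$, leaving $K_n$ together with $v$ adjacent to the subset $S$. If $S=\emptyset$ then $M$ is the zero matrix and the claim is Remark~\ref{M=0}; otherwise I exhibit a semi-transitive orientation via Theorem~\ref{main-orientation}. Since $K_n$ is complete, any linear order of its vertices yields a transitive orientation whose longest directed path is the whole order, so I choose the order listing the vertices of $S$ first, $p_1\to\cdots\to p_{|S|}\to\cdots\to p_n$ with $S=\{p_1,\dots,p_{|S|}\}$, and orient $v$ as a source adjacent to the contiguous block $\{p_1,\dots,p_{|S|}\}$. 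Then $v$ is a vertex of type A in the sense of Theorem~\ref{semi-tran-groups}, and since there are no type-C vertices the restrictions of Theorem~\ref{relative-order} are vacuous; hence the orientation is semi-transitive and $G(M)$ is word-representable.

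The argument is essentially routine once the right reductions are spotted. The only genuinely substantive point is the observation in the second case that a constant-column matrix makes all row-vertices neighbourhood-equivalent, after which the real content is the (standard) freedom to reorder a clique's transitive orientation so that a prescribed neighbourhood becomes an initial block, turning the surviving independent vertex into a legitimate type-A source. I expect no serious obstacle; the points to watch are the degenerate cases (the all-zero matrix, or $S=\emptyset$, producing isolated vertices) and checking that every appeal to Lemma~\ref{lem-neighb} genuinely applies, i.e. that the deleted vertices are really of degree $0$, degree $1$, or neighbourhood-duplicates.
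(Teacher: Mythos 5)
Your proposal is correct and, in structure and substance, matches the paper's proof: both cases are handled by the identical reductions via Lemma~\ref{lem-neighb} (delete the degree-$0$ vertices, then collapse neighbourhood-duplicates), and your key observation in the column case---that constant columns make all independent-set vertices neighbourhood-equivalent---is exactly the paper's. The one divergence is the endgame of the column case. The paper finishes elementarily by exhibiting the explicit representing word $x12\cdots px(p+1)(p+2)\cdots n$ for the clique $K_n$ with one extra vertex $x$ adjacent to $\{1,\ldots,p\}$; you instead build a semi-transitive orientation, making $v$ a type-A source over an initial block of the transitive order and invoking Theorems~\ref{semi-tran-groups}, \ref{relative-order} and~\ref{main-orientation} together with Theorem~\ref{key-thm}. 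Your route is sound (with no type-C vertices the restrictions of Theorem~\ref{relative-order} are indeed vacuous) and has the virtue of rehearsing the orientation machinery used throughout the paper, but it is heavier than needed for so small a residual graph, and it carries one formal wrinkle: Theorem~\ref{main-orientation} is stated for a split graph whose clique is maximal, which fails in the boundary case $S=\{c_1,\ldots,c_n\}$, where $v$ together with the clique forms $K_{n+1}$; there you should either absorb $v$ into the clique or note directly that a transitive tournament on $K_{n+1}$ is semi-transitive (the paper's explicit word, with $p=n$, covers this case without comment). A similar cosmetic point applies to your row case when $M$ is the zero matrix, where no row-vertex survives and the residue is just $K_n$ rather than $K_{n+1}$---but you flag precisely these degenerate cases yourself, so nothing essential is missing.
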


\begin{proof}
If every row of $M$ consists of all $0$'s or all $1$'s then in $G(M)$, each vertex in the independent set is an isolated vertex, or is connected to every vertex in the clique. By Lemma~\ref{lem-neighb}, word-representability of $G(M)$ is equivalent to word-representability of either the clique $K_{n+1}$, or the disjoint union of a clique ($K_n$ or $K_{n+1}$) and an isolated vertex, which are clearly semi-transitively orientable, and thus, by Theorem~\ref{key-thm}, word-representable.
		
	On the other hand, if every column of $M$ consists of all $0$'s or all $1$'s then the neighborhood of each vertex in the independent set is the same and, by Lemma~\ref{lem-neighb}, word-representability of  $G(M)$ is equivalent to word-representability of the  clique $K_{n}$ with a vertex $x$ connected to some, maybe none or all of clique's vertices. If w.l.o.g. $x$ is connected to vertices $1, 2, \ldots, p$, $0\leq p\leq n$, in $K_n$ formed by the vertices $1,2,\ldots,n$, then the word $x12\cdots px(p+1)(p+2)\cdots n$ represents the graph. 
\end{proof}

It is obvious that if $M^*$ is a matrix obtained by a row or column permutation of a matrix $M$, then $G(M^*)$ is a split graph obtained by relabelling the vertices of the graph $G(M)$. Hence we get the following lemma.

\begin{lemma} \label{Lem-permutrowcolumn}
	Let $M$ be an $m\times n$ binary matrix and $M^*$ is a matrix obtained from a sequence of row and/or column permutations of $M$.  Then, $G(M)$ is word-representable if and only if $G(M^*)$ is word-representable.
\end{lemma}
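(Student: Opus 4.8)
The plan is to reduce the claim to two facts: first, that a row or column permutation of $M$ produces a graph $G(M^*)$ that is isomorphic to $G(M)$; and second, that word-representability is invariant under graph isomorphism. Since a sequence of permutations is a composition of single permutations, and a composition of graph isomorphisms is again a graph isomorphism, it suffices to treat one column permutation and one row permutation separately.

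First I would make the isomorphism explicit at the level of adjacency matrices. Write $P_\sigma$ for the $n\times n$ permutation matrix of a column permutation $\sigma$, so that $M^*=MP_\sigma$ and $S(M^*)=\begin{bmatrix} L_n & P_\sigma^{\mathsf T}M^{\mathsf T}\\ MP_\sigma & O_m\end{bmatrix}$. Conjugating $S(M)$ by the block permutation matrix $Q=\begin{bmatrix} P_\sigma & 0\\ 0 & I_m\end{bmatrix}$ and forming $Q^{\mathsf T}S(M)Q$, the key computation is that $P_\sigma^{\mathsf T}L_nP_\sigma=L_n$. This holds because $L_n=J_n-I_n$, where $J_n$ is the all-ones matrix, and both $J_n$ and $I_n$ are fixed by simultaneous permutation of rows and columns. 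Hence $Q^{\mathsf T}S(M)Q=S(M^*)$, which says that $S(M)$ and $S(M^*)$ are adjacency matrices of the same graph under the vertex relabelling encoded by $Q$; that is, $G(M)\cong G(M^*)$. A row permutation is handled identically, conjugating instead by a block matrix of the form $\begin{bmatrix} I_n & 0\\ 0 & P\end{bmatrix}$ for a suitable permutation matrix $P$, where the block $O_m$ is also fixed, being the zero matrix. (The only bookkeeping point is that the order of the maximal clique, which depends on the presence of an all-ones row of $M$, is unaffected by permuting rows and columns, but this is automatic once the graphs are seen to be literally isomorphic.)

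It then remains to observe that word-representability depends only on the isomorphism type of a graph. This is immediate from the definition: if $w$ represents $G$ over the alphabet $V$ and $f\colon V\to V'$ is a graph isomorphism, then replacing each letter $x$ of $w$ by $f(x)$ yields a word $w'$ over $V'$ in which two letters alternate exactly when the corresponding letters alternated in $w$, so $f(x)f(y)$ is an edge of $G'$ precisely when $xy$ is an edge of $G$. Thus $w'$ represents $G'$. Applying this to the isomorphism $G(M)\cong G(M^*)$, and to its inverse, gives the desired equivalence.

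I do not expect any genuine obstacle here: the statement is essentially the observation quoted just before the lemma, and the content is only the routine verification that the claimed block conjugation reproduces $S(M^*)$. The one place to be slightly careful is the identity $P_\sigma^{\mathsf T}L_nP_\sigma=L_n$, namely that the clique block $L_n$ really is preserved under the relabelling; everything else is formal.
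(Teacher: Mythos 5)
Your proof is correct and follows essentially the same route as the paper, which disposes of this lemma by the one-line observation preceding it that permuting rows or columns of $M$ merely relabels the vertices of $G(M)$, so that $G(M^*)\cong G(M)$ and word-representability, being isomorphism-invariant, is preserved. All you have added is the explicit block-conjugation bookkeeping (including the check $P_\sigma^{\mathsf T}L_nP_\sigma=L_n$), which is a harmless elaboration of the same idea.
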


\begin{lemma} \label{all1row>>all0column}
	Let $M := [m_{ij}]_{m \times n}$ be an $m \times n$ binary matrix such that $m_{k1}=m_{k2}=\cdots=m_{kn}=1$ for some $k \in \{1,2,\ldots,m\}$. If 
	$$N= \begin{bmatrix}
	m_{11}&m_{12}&\cdots&m_{1n}&0\\	m_{21}&m_{22}&\cdots&m_{2n}&0\\
	\vdots & \vdots & & \vdots &\vdots \\
	m_{(k-1)1}&m_{(k-1)2}&\cdots&m_{(k-1)n}&0\\	m_{(k+1)1}&m_{(k+1)2}&\cdots&m_{(k+1)n}&0\\
	\vdots & \vdots & & \vdots & \vdots \\
	m_{m1}&m_{m2}&\cdots&m_{mn}&0
		
	\end{bmatrix},$$
	is an $(m-1) \times (n+1)$ binary matrix, then $G(M)$ is isomorphic to $G(N)$.
\end{lemma}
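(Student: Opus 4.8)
The plan is to exhibit an explicit graph isomorphism $\phi: G(M)\to G(N)$ and check that it preserves both adjacency and non-adjacency. The guiding observation is that an all-$1$'s row of $M$ corresponds to an independent-set vertex of $G(M)$ that is joined to \emph{every} vertex of the clique; together with the clique it therefore spans a copy of $K_{n+1}$, and the passage from $M$ to $N$ is nothing more than reinterpreting that vertex as a clique vertex. Note that this is genuinely different from a row/column permutation of $M$ (which only shuffles independent vertices among themselves, or clique vertices among themselves), so Lemma~\ref{Lem-permutrowcolumn} does not apply and a separate argument is needed.

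First I would fix notation by reading the vertex data off $S(M)$ and $S(N)$. In $G(M)$ the clique is $\{1,\dots,n\}$, the independent set is $\{n+1,\dots,n+m\}$, and $n+i$ is adjacent to clique vertex $j$ exactly when $m_{ij}=1$. Since row $k$ is all $1$'s, the vertex $v:=n+k$ is adjacent to every clique vertex, and — lying in the independent set — is adjacent to no other vertex $n+i$ with $i\neq k$. In $G(N)$ the clique is $\{1,\dots,n+1\}$, the independent set is indexed by the rows $i\in\{1,\dots,m\}\setminus\{k\}$, and the independent vertex $w_i$ coming from row $i$ is adjacent to clique vertex $j\le n$ exactly when $m_{ij}=1$, and is adjacent to the new clique vertex $n+1$ in no case, because the appended column of $N$ is all $0$'s.

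Then I would define $\phi$ by $\phi(j)=j$ for $1\le j\le n$, by $\phi(v)=\phi(n+k)=n+1$, and by $\phi(n+i)=w_i$ for $i\neq k$; this is plainly a bijection between the two vertex sets of common size $n+m$. To see it is an isomorphism I would run through the pairs by type: (a) two old clique vertices are adjacent in both graphs; (b) an old clique vertex $j$ and $v$ are adjacent in $G(M)$ because row $k$ is all $1$'s, while $j$ and $n+1$ are adjacent in $G(N)$ because both lie in the clique $K_{n+1}$; (c) two remaining independent vertices are non-adjacent in both; (d) a remaining independent vertex $n+i$ and a clique vertex $j\le n$ have adjacency governed by the same value $m_{ij}$ in both graphs, since the first $n$ columns of $N$ are exactly the columns of $M$ with row $k$ deleted; and finally (e) a remaining independent vertex $n+i$ and $v$ are non-adjacent in $G(M)$ (both in the independent set), while $w_i$ and the new clique vertex $n+1$ are non-adjacent in $G(N)$ precisely because the appended column is zero.

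The verification is entirely routine bookkeeping, so I do not expect a real obstacle; the only point that deserves emphasis is case (e), which is also the reason the appended column must be all $0$'s rather than anything else. Moving $v$ out of the independent set and into the clique converts its non-edges to the other independent vertices into non-edges of a clique vertex, and these must stay non-edges — this is exactly what the zero column records. I would state (e) as the crux and dispatch cases (a)–(d) tersely. Equivalently, one could phrase the whole argument as checking that $S(N)=P\,S(M)\,P^{T}$ for the permutation matrix $P$ realizing $\phi$, but the direct vertex bijection is cleaner to present.
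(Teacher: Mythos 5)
Your proposal is correct and is essentially the paper's argument: the paper permutes the all-$1$'s row of $M$ to the top, obtaining $M^*$, and then observes that $S(M^*)=S(N)$ as matrices, which encodes exactly your bijection (identity on the clique, the all-$1$'s independent vertex sent to the new clique vertex $n+1$, the remaining independent vertices matched row by row). Your case-by-case adjacency check, including the crux case (e) explained by the appended zero column, is just the entrywise unpacking of that block-matrix identity, which you yourself note in your closing remark about $S(N)=P\,S(M)\,P^{T}$.
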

	
\begin{proof}
	Let $M^*$ be the matrix obtained from $M$ by making the $k^{th}$ row be the first row. That  is, 
	$$M^* = \begin{bmatrix}
	1&1& \cdots &1\\
	m_{11}&m_{12}&\cdots&m_{1n}\\	m_{21}&m_{22}&\cdots&m_{2n}\\
	\vdots & \vdots & & \vdots \\
	m_{(k-1)1}&m_{(k-1)2}&\cdots&m_{(k-1)n}\\	m_{(k+1)1}&m_{(k+1)2}&\cdots&m_{(k+1)n}\\
	\vdots & \vdots & & \vdots  \\
	m_{m1}&m_{m2}&\cdots&m_{mn}
	\end{bmatrix}.$$
	Since $M^*$ is obtained by reordering rows of $M$, $G(M^*)$ is obtained by relabeling the vertices of $G(M)$, and thus $G(M^*)$ is isomorphic to $G(M)$. Note that $S(M^*)=S(N)$, and so $G(M^*)$ and $G(N)$ are the same graph. Hence $G(M)$ is isomorphic to $G(N)$.
\end{proof}
 
It is known \cite{K17}  that there are no non-word-representable graphs of order less than 6 and the only non-word-representable graph on 6 vertices is the wheel graph $W_5$, which is not a split graph. Thus, $G(M)$ is word-representable if $M$ is an $m \times n$ matrix and $m+n  \leq 6$. In \cite{KLMW17}, it is shown that any split graph $S$ with maximum clique $K_4$ is word-representable if and only if $S$ does not contain the graphs $T_1$, $T_2$, $T_3$  and $T_4$ shown in Figure~\ref{T1T2T3T4} as induced subgraphs. As a corollary to this result, we have the following theorem.

\begin{figure}
	\begin{center}
		
		\begin{tabular}{ccccc}
			\begin{tikzpicture}[node distance=1cm,auto,main node/.style={fill,circle,draw,inner sep=0pt,minimum size=5pt}]
			
			\node[main node] (1) {};
			\node[main node] (2) [below left of=1] {};
			\node[main node] (3) [below right of=1] {};
			\node[main node] (4) [below left of=2] {};
			\node[main node] (5) [below right of=2] {};
			\node[main node] (6) [below right of=3] {};
			\node[main node] (7) [above of=5, yshift=-0.5cm] {};
			
			\node (8) [left of=2] {$T_1=$};
			
			\path
			(1) edge (4)
			(1) edge (6);
			
			\path
			(5) edge (2)
			(5) edge (3)
			(5) edge (4)
			(5) edge (6);
			
			\path
			(7) edge (2)
			(7) edge (3)
			(7) edge (5);
			
			\path
			(2) edge (3);
			\end{tikzpicture}

			& 
			
			\ \ \ 
			
			&
			
			\begin{tikzpicture}[node distance=1cm,auto,main node/.style={fill,circle,draw,inner sep=0pt,minimum size=5pt}]
			
			\node[main node] (1) {};
			\node[main node] (2) [below of=1] {};
			\node[main node] (3) [below left of=2, xshift=-0.5cm] {};
			\node[main node] (4) [below right of=2, xshift=0.5cm] {};
			\node[main node] (5) [left of=2, xshift=0.6cm] {};
			\node[main node] (6) [below of=2, yshift=0.7cm] {};
			\node[main node] (7) [above right of=2, xshift=-0.5cm, yshift=-0.5cm] {};
			\node (8) [left of=2,  xshift=-0.5cm] {$T_2=$};
			
			\path
			(1) edge (2)
			(1) edge (4)
			(1) edge (7)
			(1) edge (3);
			
			\path
			(2) edge (3)
			(2) edge (4)
			(2) edge (5)
			(2) edge (6)
			(2) edge (7);
			
			\path
			(3) edge (4)
			(3) edge (5);
			
			\path
			(6) edge (4);
			
			\end{tikzpicture}
			\\
			\begin{tikzpicture}[node distance=1cm,auto,main node/.style={fill,circle,draw,inner sep=0pt,minimum size=5pt}]
			
			\node[main node] (1) {};
			\node[main node] (2) [below of=1] {};
			\node[main node] (3) [below left of=2,xshift=-0.5cm] {};
			\node[main node] (4) [below right of=2,xshift=0.5cm] {};
			\node[main node] (5) [below of=2,yshift=0.6cm] {};
			\node[main node] (6) [left of=2,yshift=0.1cm,xshift=0.6cm] {};
			\node[main node] (7) [right of=2,yshift=0.1cm,xshift=-0.6cm] {};
			
			\node [above left of=3] {$T_3=$};
			
			\path
			(2) edge (6)
			(2) edge (7)
			(5) edge (3)
			(5) edge (4)
			(5) edge (2)
			(1) edge (3)
			(1) edge (4)
			(6) edge (3)
			(7) edge (4)
			(1) edge (6)
			(1) edge (7)
			(2) edge (3)
			(2) edge (4)
			(4) edge (3)
			(1) edge (2);
			\end{tikzpicture}
			
			& 
			
			\ \ \ 
			
			& 	
			\begin{tikzpicture}[node distance=1cm,auto,main node/.style={fill,circle,draw,inner sep=0pt,minimum size=5pt}]
			
			\node[main node] (1) {};
			\node[main node] (2) [right of=1] {};
			\node[main node] (3) [below of=2] {};
			\node[main node] (4) [left of=3] {};
			\node[main node] (5) [above right of=1,xshift=-0.2cm] {};
			\node[main node] (6) [above right of=2] {};
			\node[main node] (7) [below left of=1,yshift=0.2cm] {};
			\node[main node] (8) [below left of=4] {};
			
			\node [above left of=7,yshift=-0.5cm] {$T_4=$};
			
			\path
			(1) edge (2)
			(1) edge (3)
			(1) edge (4)
			(1) edge (5)
			(1) edge (6)
			(1) edge (7)
			(1) edge (8);
			
			\path
			(2) edge (3)
			(2) edge (4)
			(2) edge (5)
			(2) edge (6);
			
			\path
			(3) edge (4)
			(3) edge (6)
			(3) edge (8);
			
			\path
			(4) edge (8)
			(4) edge (7);
			
			\end{tikzpicture}
		\end{tabular}
		
		\caption{\label{T1T2T3T4} Non-word-representable split graphs $T_1,T_2,T_3,T_4$.} 
	\end{center}
\end{figure}
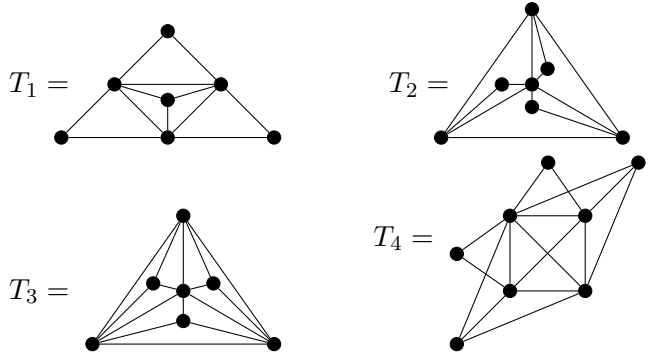

\begin{theorem} \label{N-W-R-44}
	Let $A$ be an $m \times 4$ binary matrix without all $1$'s rows. Then, $G(A)$ is word-representable if and only if the rows and columns of $A$ cannot be permuted to be a matrix containing 
	${\tiny \begin{bmatrix}
		1&1&0&0 \\	1&0&1&0 \\
		0&1&1&0 
		\end{bmatrix} }$, 
	${\tiny \begin{bmatrix}
		1&1&0&0 \\	1&0&1&0 \\
		1&0&0&1 
		\end{bmatrix} }$, 
	${\tiny \begin{bmatrix}
		1&0&1&1 \\	1&1&0&1 \\
		1&1&1&0 
		\end{bmatrix} }$ or 
	${\tiny \begin{bmatrix}
		1&1&1&0 \\	1&1&0&0 \\
		1&0&0&1 \\	1&0&1&1 
		\end{bmatrix} }$  as a submatrix.
\end{theorem}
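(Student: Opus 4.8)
The plan is to read the statement as a translation, into the language of binary matrices, of the forbidden induced subgraph characterisation of \cite{KLMW17} for split graphs with maximum clique $K_4$ quoted just above. Throughout write $M_1,M_2,M_3,M_4$ for the four matrices displayed in the statement, in the given order. First I would fix the structural setup. Because $A$ has no all-$1$ row, no independent vertex of $G(A)$ is adjacent to all four clique vertices, so by the construction of $S(A)$ the four columns of $A$ span a maximum clique $K_4$ of $G(A)$, which I denote $C$; hence the cited characterisation applies verbatim, and $G(A)$ is word-representable if and only if it contains none of $T_1,T_2,T_3,T_4$ (Figure~\ref{T1T2T3T4}) as an induced subgraph. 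I would also record the split decomposition of each $T_i$: listing its independent vertices against its clique vertices recovers exactly the bipartite pattern $M_i$ (three independent rows for $T_1,T_2,T_3$, four for $T_4$), so that ``$G(A)$ has an induced $T_i$ whose clique is $C$'' means literally ``$M_i$ occurs as a submatrix of $A$ using the four columns, after a suitable permutation.''

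Given this dictionary, the theorem reduces to the equivalence, taken over all indices, between \emph{$G(A)$ contains some induced $T_i$} and \emph{some permutation of the rows and columns of $A$ exhibits some $M_j$ as a submatrix}. The direction from submatrix to induced subgraph is immediate and I would dispatch it first: if, after permuting, rows $r_1,\dots,r_k$ and the four columns display $M_j$, then the four clique vertices together with $r_1,\dots,r_k$ induce exactly $T_j$, since the columns form the clique, the displayed rows contribute the prescribed clique--independent adjacencies, and the chosen rows are pairwise non-adjacent. By Lemma~\ref{Lem-permutrowcolumn} the permutations cost nothing, so this half needs only the bookkeeping of matching each $M_j$ to its graph $T_j$.

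For the converse I would begin with an induced copy $H\cong T_i$ and locate the image of its clique. Since the independent set of $G(A)$ carries no edges, any clique meets it in at most one vertex; as $K_4$ is a maximum clique and $|C|=4$, the clique of $H$ is either all of $C$ or consists of three columns together with one independent vertex $u$. In the first, \emph{aligned}, case the independent vertices of $H$ are rows of $A$ whose neighbourhoods, read against $C$, reproduce $M_i$, and a column permutation finishes the argument. The step I expect to be the genuine obstacle is the second, \emph{twisted}, case, in which a row $u$ of $A$ plays the role of a clique vertex of $H$: one must then show that the rows of $A$ taking part in $H$ nonetheless contain one of $M_1,\dots,M_4$ as a submatrix, possibly after choosing a different distinguished row and relabelling columns. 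Here I would exploit the precise incidence pattern of each $M_i$ --- which clique vertices are forced to be adjacent, respectively non-adjacent, to which independent vertices --- to enumerate the few ways a twisted copy can sit inside $A$, and verify for each that the resulting configuration of rows already forces one of the four patterns. Organising this finite but delicate case analysis uniformly across the four graphs, and confirming that it is exhaustive, is where the real content of the argument lies; the remaining points (closure of the four patterns under the relevant symmetries, and the separate four-row treatment of $T_4$) are routine.
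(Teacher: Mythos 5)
Your overall route is exactly the paper's: the paper supplies no argument at all for Theorem~\ref{N-W-R-44}, introducing it with the single phrase ``as a corollary'' to the forbidden-induced-subgraph characterisation of \cite{KLMW17}, and your dictionary $T_j\leftrightarrow M_j$, your easy direction, and your aligned case simply spell out what that one-liner must mean. The gap is that your converse stops at a promissory note exactly where you locate ``the real content'': you never perform the twisted-case enumeration, you only predict its outcome, namely that any induced $T_i$ whose $K_4$ consists of three columns plus an independent row still forces one of $M_1,\dots,M_4$ inside $A$. A proof that defers precisely the step it identifies as the crux is incomplete.

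Worse, the predicted outcome is false, so the enumeration you propose cannot succeed. Consider
\[
A'=\begin{bmatrix}1&1&1&0\\1&1&0&1\\1&0&1&0\\0&1&1&0\end{bmatrix},
\]
which has no all-$1$ row. Columns $c_1,c_2,c_3$ together with row $1$ form a $K_4$, and rows $2,3,4$ meet $\{c_1,c_2,c_3\}$ in the pairs $\{c_1,c_2\},\{c_1,c_3\},\{c_2,c_3\}$ while missing row $1$; the induced subgraph on these seven vertices is exactly $T_1$, with row $1$ playing the clique vertex that has no independent neighbours. (Here $T_1$ means the graph of the pattern $M_1$, which is how the paper itself uses it in Proposition~\ref{caseindex=2}; the drawing of $T_1$ in Figure~\ref{T1T2T3T4} contains a typo, as the graph drawn there is not even split.) Hence $G(A')$ is non-word-representable; one can confirm this independently via Theorem~\ref{Thm-genpermutecolumn}: the unique cyclic order making the four $1$-sets $\{1,2,3\},\{1,2,4\},\{1,3\},\{2,3\}$ cyclically consecutive is $(1,3,2,4)$, and in each of its eight linearisations some row of the form $1^a0^b1^c$ coexists with an interval row having $1$s in positions $a$ and $a+b+1$, violating condition (ii). Yet no row/column permutation of $A'$ contains any $M_j$ as a submatrix: $M_1,M_2$ require three rows of weight $2$ and $M_3$ requires three rows of weight $3$, while $A'$ has only two of each, and $M_4$ has column-sum multiset $\{4,2,2,2\}$ against $A'$'s $\{3,3,3,1\}$. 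So in the twisted case the biconditional of Theorem~\ref{N-W-R-44} itself breaks down: the paper's corollary claim tacitly assumes that every induced $T_i$ sits with its clique on the four columns, which is precisely what twisted embeddings violate. Only the safe direction (pattern present $\Rightarrow$ non-word-representable) is used later in the paper, e.g.\ in Proposition~\ref{caseindex=2}, so the classification tables are not obviously affected; but neither the paper's one-line justification nor your plan yields the stated equivalence, which would need either an alignment hypothesis or additional forbidden patterns such as $A'$ itself.
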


Let $x^n$ denotes $xx\cdots x$ where $x$ is repeated $n$ times. The next theorem gives a sufficient condition for word-representability of a given graph $G(M)$.

\begin{theorem}\label{Thm-permuterow010}
	Let $M$ be an $m \times n$ binary matrix. If there is a sequence of column permutations of $M$ giving the matrix such that each row of $M$ is of the form  $0^r  1^s  0^t$ for some non-negative integers $r,s,t$, then $G(M)$ is word-representable.
\end{theorem}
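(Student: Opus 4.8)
The plan is to exhibit an explicit semi-transitive orientation of $G(M)$ and then invoke Theorem~\ref{key-thm}. First, by Lemma~\ref{Lem-permutrowcolumn} word-representability of $G(M)$ is unchanged under column permutations, so I may assume that $M$ itself has every row of the form $0^r1^s0^t$. Labelling the clique vertices $1,2,\ldots,n$ by the columns of $M$, this means each independent-set vertex $v$ (a row of $M$) is adjacent precisely to a contiguous block of clique vertices $a_v, a_v+1,\ldots,b_v$ (with the block empty if the row is all $0$'s). I then orient $G(M)$ as follows: inside the clique, orient $i\to j$ whenever $i<j$, so the clique becomes the transitive tournament with longest directed path $1\to 2\to\cdots\to n$; and for each independent-set vertex $v$, orient every incident edge away from $v$, making $v$ a source. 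In the language of Theorem~\ref{semi-tran-groups}, every independent-set vertex is then a type A vertex whose neighbourhood $\{a_v,\ldots,b_v\}$ is an interval, and no type C vertex occurs.

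Next I would verify that this orientation is semi-transitive directly from the definition. Acyclicity is immediate: each independent-set vertex is a source and so cannot lie on any directed cycle, while the restriction to the clique is the acyclic transitive tournament. For the shortcut-free condition, consider any directed path $v_0\to v_1\to\cdots\to v_k$ together with an edge between $v_0$ and $v_k$. Since every independent-set vertex is a source, at most $v_0$ can be an independent-set vertex, and $v_1,\ldots,v_k$ are all clique vertices, so $v_1<v_2<\cdots<v_k$. If $v_0$ is also a clique vertex, the whole path lies in the transitively oriented clique, so every edge $v_a\to v_b$ with $a<b$ is present and there is nothing to check.

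The one remaining, and main, case is when $v_0=v$ is an independent-set source with an edge to $v_k$; this is exactly where the interval structure of the neighbourhood does the work. The edge $v\to v_1$ forces $v_1\in\{a_v,\ldots,b_v\}$, so $a_v\le v_1$, and the assumed edge between $v$ and $v_k$ forces $v_k\le b_v$. Because $v_1<\cdots<v_k$, every intermediate column satisfies $a_v\le v_1\le v_i\le v_k\le b_v$, so $v$ is adjacent to each $v_i$ and, being a source, $v\to v_i$ for all $i$; together with $v_a\to v_b$ inside the clique, this shows every edge of the path is present, so the path is not a shortcut. This step is precisely where contiguity of the $1$'s in each row is essential: if the neighbourhood were not an interval, one could have $v$ adjacent to $v_1$ and $v_k$ but not to some intermediate $v_i$, producing a shortcut. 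Having checked all cases, the orientation is semi-transitive, and Theorem~\ref{key-thm} yields that $G(M)$ is word-representable.
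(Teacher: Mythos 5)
Your proof is correct, and its construction is essentially the one in the paper: after reducing to $M$ itself via Lemma~\ref{Lem-permutrowcolumn}, orient the clique as the transitive tournament $1\to 2\to\cdots\to n$ given by the column order, and orient all clique--independent-set edges in a single uniform direction. The paper points the cross edges from the clique \emph{into} the independent set, so every independent-set vertex is a sink (type B in Theorem~\ref{semi-tran-groups}), and then concludes in one line by invoking the classification of semi-transitive orientations of split graphs (Theorem~\ref{main-orientation}), the restrictions of Theorem~\ref{relative-order} being vacuous since no type C vertex occurs. You instead make every independent-set vertex a source (type A) --- an immaterial difference by symmetry --- but, more substantively, you verify semi-transitivity directly from the definition rather than through Theorem~\ref{main-orientation}. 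Your case analysis is sound: a source can occur only as the initial vertex $v_0$ of a directed path, so $v_1<\cdots<v_k$ all lie in the clique; paths entirely inside the clique are harmless by transitivity; and in the mixed case the contiguity hypothesis gives $a_v\le v_1\le v_i\le v_k\le b_v$, which is exactly what rules out shortcuts. What each approach buys: the paper's argument is shorter but leans on the imported machinery of Theorems~\ref{semi-tran-groups}--\ref{main-orientation} from \cite{KLMW17}, whereas yours is self-contained modulo Theorem~\ref{key-thm} and makes explicit the one place where the $0^r1^s0^t$ hypothesis is actually used --- a point the paper's appeal to the classification leaves implicit.
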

\begin{proof}
	Assume that $M^*$ is the matrix obtained from a sequence of column permutations of $M$ and each row of $M^*$ is of the form $0^r  1^s  0^t$ where $r,s,t$ are non-negative integers. Let the $i^{th}$ row/column of the adjacency matrix $S(M^*)$ correspond to vertex $i$ in $G(M^*)$. So the clique $C$ in $G(M^*)$ contains vertices $1,2,\dots,n$ and the independent set $I$ in $G(M^*)$ contains vertices $n+1,n+2, \dots , n+m$. Assign the orientation of edges in $G(M^*)$ as $i \rightarrow j$ if and only if $i<j$. We have that $1 \rightarrow 2 \rightarrow \cdots \rightarrow n$ is the longest path in the transitively oriented $C$, and the edges between $C$ and $I$ are oriented from $C$ to $I$. Thus, each edge in the independent set is of type B, and we are done by Theorems~\ref{key-thm} and \ref{main-orientation}.	
\end{proof}

We have the following important generalization of Theorem~\ref{Thm-permuterow010}.

\begin{theorem} \label{Thm-genpermutecolumn}
	Let $M$ be an $m \times n$ binary matrix without all $1$'s rows. The split graph $G(M)$ is word-representable if and only if $M$ satisfies the following conditions:
	\begin{itemize}
		\item[{\em (i)}] there is a sequence of column permutations of $M$ giving a matrix $M^*$ where every row is of the form $ 0^r  1^s  0^t $ or $ 1^r  0^s 1^t $ for some nonnegative integers $r$, $s$, $t$, and
		\item[{\em (ii)}] for any row of $M^*$ of the form $ 1^a 0^b  1^c $ for some positive integer $a,b,c$, there is no other row not of the form $ 1^a 0^b  1^c $ in which the entries in positions $a$ and $a+b+1$ are $1$'s.
	\end{itemize}
\end{theorem}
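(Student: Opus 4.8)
The plan is to prove both implications through the semi-transitive orientation machinery, exploiting the dictionary between column orderings and orientations. Throughout, I identify the clique vertices of $G(M)$ with the columns $1,\ldots,n$ of $M$ and the independent vertices with the rows. A transitive orientation of the clique is the same thing as a linear order of the columns, i.e.\ a column permutation; writing $\vec{P}=p_1\rightarrow\cdots\rightarrow p_n$ for the induced order, a row is of type A or B exactly when its $1$'s are consecutive (an \emph{interval} row $0^r1^s0^t$) and it is of type C exactly when it equals $1^a0^b1^c$ with $a,b,c\ge 1$ (a \emph{co-interval} row: its $1$'s are not consecutive, so by Theorem~\ref{semi-tran-groups} it cannot be of type A or B, and since $M$ has no all-$1$'s row it must be type C). For a co-interval row $x=1^a0^b1^c$ one computes $|I_x|=a$ and $|O_x|=c$, so the two distinguished columns of Theorem~\ref{relative-order} are $p_a$ and $p_{n-c+1}=p_{a+b+1}$, namely the columns flanking its zero-block; these are precisely positions $a$ and $a+b+1$ of condition (ii). The all-interval case (no co-interval rows, (ii) vacuous) recovers Theorem~\ref{Thm-permuterow010}.

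For sufficiency, suppose $M^*$ satisfies (i) and (ii). Orient the clique by $p_1\rightarrow\cdots\rightarrow p_n$, make every interval row a sink (type B) and every co-interval row type C. By Theorem~\ref{main-orientation} it suffices to verify the restrictions of Theorem~\ref{relative-order} at each co-interval row $x=1^a0^b1^c$. An interval row connected to both $p_a$ and $p_{a+b+1}$ would carry $1$'s throughout positions $a,\ldots,a+b+1$, hence $1$'s in positions $a$ and $a+b+1$ while not being of the form $1^a0^b1^c$; a co-interval row $y$ with $I_y$ (resp.\ $O_y$) containing both $p_a$ and $p_{a+b+1}$ has its initial (resp.\ final) block of $1$'s covering both positions and strictly longer than that of $x$, so it too carries $1$'s in positions $a$ and $a+b+1$ and is not of the form $1^a0^b1^c$. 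In every case condition (ii) forbids such a row, so the orientation is semi-transitive and $G(M)$ is word-representable by Theorem~\ref{key-thm}.

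For necessity, assume $G(M)$ is word-representable and fix a semi-transitive orientation; by Lemma~\ref{lem-tran-orie} and Theorem~\ref{semi-tran-groups} the induced column order $M^\sharp$ satisfies (i), and by Theorem~\ref{main-orientation} it satisfies the restrictions of Theorem~\ref{relative-order}. The key observation is that these restrictions already rule out two of the three ways (ii) could fail: no interval row can carry $1$'s in both flanking positions of a co-interval row (the forbidden type A/B configuration), and no co-interval row can have both flanking positions of another inside a single one of its $1$-blocks (the forbidden \emph{separated zero-blocks} type C configuration, where one zero-block lies to the right or left of the other with a gap). Hence the only obstruction to (ii) that can survive in $M^\sharp$ is a \emph{nesting}: two co-interval rows, one of whose zero-blocks is strictly contained in the other's. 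It therefore remains to prove the purely combinatorial claim that nestings can be removed by a column permutation while preserving (i) and the restrictions of Theorem~\ref{relative-order}; once all nestings are gone, (ii) holds.

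The elimination of nestings is the heart of the argument and the step I expect to be the main obstacle. The naive idea---promoting the \emph{outer} co-interval row to an interval row by sliding its zero-block to one end so its two blocks of $1$'s merge---fails verbatim, because a third co-interval row whose zero-block \emph{contains} that of the outer one can be destroyed by the same slide. What rescues the argument is the structure forced by Theorem~\ref{relative-order}: the zero-blocks of the type C rows are pairwise non-separated, so, being intervals, their union is a single interval $U$ of columns, and no interval row straddles any of them. The plan is then to choose an \emph{extremal} ordering---for instance one minimising the number of co-interval rows, and among those the total length of their zero-blocks---and to show that any remaining nesting can be undone by a reordering confined to $U$ that regroups the columns common to the relevant $1$-blocks, strictly decreasing the extremal quantity without creating a separated pair or allowing an interval row to straddle a zero-block. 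The resulting contradiction forces the extremal ordering to be nesting-free, which completes the proof. Making this regrouping step precise---tracking how overlapping and nested co-interval rows inside $U$ are simultaneously affected by a single column permutation---is where I expect the real work to lie.
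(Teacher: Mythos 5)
Your ``$\Leftarrow$'' direction coincides with the paper's: the same orientation (clique oriented along the column order, interval rows made type-B sinks, co-interval rows made type C) and the same verification of the restrictions of Theorem~\ref{relative-order} from condition (ii); that half is correct and complete. The divergence, and the gap, is in ``$\Rightarrow$''. The paper's own proof of necessity is a direct transcription with no reordering step at all: take a semi-transitive orientation, relabel the clique along the longest path of the induced transitive tournament, and assert that conditions (ii) and (iii) of Theorem~\ref{main-orientation} \emph{become} conditions (i) and (ii) of the present theorem. You correctly observe that this transcription is not literally tight: nested zero-blocks pass the restrictions of Theorem~\ref{relative-order} yet violate condition (ii). Concretely, for the rows $1001$ and $1101$ with the clique oriented $p_1\rightarrow p_2\rightarrow p_3\rightarrow p_4$, one gets two type-C vertices with $I_x=\{p_1\}$, $O_x=\{p_4\}$ and $I_y=\{p_1,p_2\}$, $O_y=\{p_4\}$; neither $I_y$ nor $O_y$ contains both $p_1$ and $p_4$, so the orientation is semi-transitive by Theorem~\ref{main-orientation}, yet $y$ has $1$'s in positions $1$ and $4$ and is not of the form $1^10^21^1$, so this particular ordering fails (ii) (the permutation $2143$ repairs it, turning both rows into intervals). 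So your diagnosis that nesting is the one configuration allowed by the restrictions but forbidden by (ii) identifies a genuine gloss in the paper's own two-line argument.

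But your repair is announced, not executed, and this is a genuine gap in the proposal. Your necessity direction reduces the theorem to the claim that any ordering satisfying (i) and the restrictions of Theorem~\ref{relative-order} can be re-permuted to destroy all nestings while preserving (i) and the restrictions, and you explicitly defer the core of that claim --- the precise extremal quantity, the definition of the regrouping permutation inside $U$, and the proof that it strictly decreases the quantity without creating separated pairs or straddling interval rows --- as ``where I expect the real work to lie.'' As written, nothing guarantees such a denesting move exists; indeed your own discussion shows the naive slide fails, so the ``only if'' direction is unproven in your write-up. To finish, you must either prove that combinatorial denesting lemma in full, or argue directly from the semi-transitive orientation that a nesting-free ordering can be extracted from the start --- which is precisely the step the paper's terse proof silently assumes rather than supplies.
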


\begin{proof} ``$\Leftarrow$.'' Assign the orientation of edges in $G(M^*)$ as $i \rightarrow j$ if $i<j$ {\em except} if $j>n$ (i.e.\ $j$ is a vertex in the independent set) and the row in $M^*$ corresponding to $j$ is of the form $1^r  0^s 1^t $, in which case we still orient $i\rightarrow j$ for $1\leq i\leq r$ but $j\rightarrow i$ for $r+s+1\leq i\leq n$. The vertices in the independent set will then be of types B and C, and taking into account condition (ii),  Theorems~\ref{key-thm} and \ref{main-orientation} can be applied to see that $G(M^*)$ is word-representable, and thus $G(M)$ is word-representable by Lemma~\ref{Lem-permutrowcolumn}. \\

\noindent
``$\Rightarrow$.'' By Theorem~\ref{key-thm}, $G(M)$ admits a semi-transitive orientation. By Theorem~\ref{main-orientation}, under this orientation the clique is oriented transitively, and we can rename the vertices of the clique, if necessary so that the longest path would be formed by $1\rightarrow 2\rightarrow\cdots\rightarrow n$. Note that renaming vertices in the clique corresponds to permuting columns in $M$ giving $M^*$. But then, conditions (ii) and (iii) in Theorem~\ref{main-orientation} give conditions (i) and (ii) in this theorem.
\end{proof}

\begin{remark}
	If $M$ has an all $1$'s row, we can see that Theorems~\ref{N-W-R-44} and~\ref{Thm-genpermutecolumn} cannot be applied. However, we can use Theorem~\ref{all1row>>all0column} to change $M$ into an $(m-1) \times (n+1)$ matrix $N$ which does not contain all $1$'s row. So we can apply the theorems to matrix $N$ instead of $M$ because $G(N)$ is isomorphic to $G(M)$. This observation also applies to  Corollary~\ref{N-W-R-AA44} below.  
\end{remark}

We can see that Theorem \ref{Thm-genpermutecolumn} allows us to answer the question on word-representability of $G(M)$ by looking at permutations of columns in $M$. Let $M= \left[m_{ij}\right]_{m \times n}$ be an $m \times n$ matrix and $\rho=\rho_1\rho_2 \cdots \rho_n$ is a permutation of $\{ 1,2, \ldots,n \}$ written in one-line notation. We say that $$M^*=\begin{bmatrix}
m_{1\rho_1}&m_{1\rho_2}&\cdots&m_{1\rho_n}\\
m_{2\rho_1}&m_{2\rho_2}&\cdots&m_{2\rho_n}\\
\vdots&\vdots&\ddots&\vdots\\
m_{m\rho_1}&m_{m\rho_2}&\cdots&m_{m\rho_n}\\
\end{bmatrix}$$
is {\it the matrix obtained from reordering columns of $M$ in the order given by $\rho$}. The key approach given by Theorem \ref{Thm-genpermutecolumn} is finding a permutation $\rho$ that turns each row of $M^*$ into the form $0^r1^s0^t$ or $1^r0^s1^t$ (so, all 1's in $M^*$ are cyclically consecutive). Interestingly, to prove word-representability results in this paper, only rows of the form $0^r1^s0^t$ are used, so that condition (ii) in Theorem~\ref{Thm-genpermutecolumn} is not applicable.

\begin{example}\label{Ex-8x8} {\em  
In the matrix $M=\begin{bmatrix}
1&0&0&1&0&1&1&0\\
0&0&0&0&0&0&0&0\\
1&0&1&0&1&0&1&0\\
0&0&0&0&0&0&0&0\\
0&1&1&0&0&1&1&0\\
0&0&0&0&0&0&0&0\\
1&0&0&1&0&1&0&1\\
0&0&0&0&0&0&0&0
\end{bmatrix}$ \\[2mm]
	we can ignore rows $2, 4, 6$ and $8$ because all entries in these rows are zero. Then we need to find a permutation $\rho = \rho_1\rho_2 \cdots \rho_8$ making
	\begin{itemize}
		\item 	columns $1, 4, 6$ and $7$ be (cyclically) consecutive in rows $1$;
		\item columns $1, 3, 5$ and $7$ be (cyclically) consecutive in row $3$ and $7$;
		\item  columns $2, 3, 6$ and $7$ be (cyclically) consecutive in row $5$.
	\end{itemize}
	It can be implied from the first and the second bullet points that 1 and 7 must be consecutive in $\rho$ and then, w.l.o.g., 4 and 6 are next to the left of these numbers and 3 and 5 are next to the right of them (cyclically). Hence $\rho$ contains $$\{ 4,6 \}, \{ 1,7 \}, \{ 3,5 \}$$ where numbers in $\{ \}$ are consecutive in $\rho$ but are in some unknown to us order.  But then, we get a contradiction with the second bullet point. Hence, there is no such $\rho$ and $G(M)$ is non-word-representable by Theorem~\ref{Thm-genpermutecolumn}.
}
\end{example}

\section{General results on split graphs generated by morphisms}\label{general-sec}

In this section, we discuss rather general results on split graphs generated by morphisms, thus preparing ourselves for a classification of the case of $2\times 2$ matrices coming in the next section.

\begin{definition}
	Let  $A,B$ be $m\times n$ binary matrices. The matrix $M^k(A,B)$ is said to be the $k^{th}$-iteration of the $2$-dimensional morphism applied to the $1 \times 1$ matrix $\left[ 0 \right]$ which maps $\left[ 0 \right] \rightarrow A$ and $\left[ 1 \right] \rightarrow B$. Moreover, we write $S^k(A,B)$ for the matrix $S(M^k(A,B))$ and $G^k(A,B)$ for the graph with the adjacency matrix $S^k(A,B)$.
\end{definition}

\begin{example} \label{ExAB}
	Let $A=\begin{bmatrix}1&0\\0&1\end{bmatrix}$ and $B=\begin{bmatrix}0&1\\1&0	\end{bmatrix}$. Then we have
	\begin{center}
		\begin{tabular}{lll}
			$M^0(A,B)=\begin{bmatrix}0\end{bmatrix}$, & 
			$M^1(A,B)=\begin{bmatrix}1&0\\0&1\end{bmatrix}$, &
			$M^2(A,B)=\begin{bmatrix}0&1&1&0\\1&0&0&1\\1&0&0&1\\0&1&1&0\end{bmatrix}$.
		\end{tabular}
	\end{center}
Then, $S^2(A,B) = \begin{bmatrix} 0&1&1&1&0&1&1&0\\1&0&1&1&1&0&0&1\\1&1&0&1&1&0&0&1\\1&1&1&0&0&1&1&0\\0&1&1&0&0&0&0&0\\1&0&0&1&0&0&0&0\\1&0&0&1&0&0&0&0\\0&1&1&0&0&0&0&0\end{bmatrix} $ and $G^2(A,B)$ is shown in Figure~\ref{ExampleSAB}.
\end{example}

\begin{figure} 
	\begin{center}
		
		\begin{tikzpicture}[node distance=1cm,auto,main node/.style={fill,circle,draw,inner sep=0pt,minimum size=5pt}]
		
		\node[main node] (1) {};
		\node[main node] (2) [above right of=1] {};
		\node[main node] (3) [right of=2] {};
		\node[main node] (4) [below right of=3] {};
		\node[main node] (5) [below of=4] {};
		\node[main node] (6) [below left of=5] {};
		\node[main node] (7) [left of=6] {};
		\node[main node] (8) [above left of=7] {};
		
		\path
		(1) edge (2)
		(1) edge (3)
		(1) edge (4)
		(2) edge (3)
		(2) edge (4)
		(3) edge (4)
		(2) edge (5)
		(3) edge (5)
		(2) edge (8)
		(3) edge (8)
		(1) edge (7)
		(4) edge (7)
		(1) edge (6)
		(4) edge (6);
		\end{tikzpicture}
		
		\caption{\label{ExampleSAB}  The split graph $G^2(A,B)$ corresponding to the adjacency matrix $S^2(A,B)$ in Example \ref{ExAB}.}
	\end{center}
\end{figure}
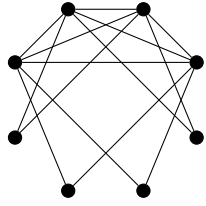

\begin{remark}\label{A=0}
If $A$ is a zero matrix, then $M^k(A,B)$ is always a zero matrix for any $m \times n$ matrix $B$ and positive integer $k$. So, by Remark~\ref{M=0}, $G^k(A,B)$ is word-representable. 
\end{remark}

\subsection{The case of $A=B$}

In this case, both $\left[0\right]$ and $\left[1\right]$ are mapped to the same matrix~$A$, so if $A$ is an $m\times n$ binary matrix, then
$$S^k(A,A)=\left[ {\begin{array}{cc}
	L_{n^k} & A_k^T\\
	A_k & O_{m^k}
	\end{array} } \right]
\text{~~~~where~~} A_k =  \underbrace{ \left[ \begin{array}{cccc}
		A&A& \cdots & A \\
		A&A& \cdots & A \\
		\vdots& \vdots& \ddots& \vdots\\
		A&A& \cdots & A 
		\end{array}  \right]}_{n^k \text{~columns}}. $$ 
Clearly, $A_k$ is an $n^k \times m^k$ matrix and $S^k(A,A) = S(A_k)$, so $G^k(A,A)$ is isomorphic to $G(A_k)$.

\begin{theorem}\label{GKAA}
	Let $A$ be an $m
		\times n$ binary matrix. For $k \geq 1$, $G^k(A,A)$ is word-representable if and only if $G(A)$ is word-representable.
\end{theorem}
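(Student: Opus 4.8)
The starting point is the isomorphism $G^k(A,A)\cong G(A_k)$ recorded just above the statement, where $A_k$ is the $m^k\times n^k$ matrix obtained by tiling copies of $A$ in a grid (for $k=1$ this is simply $A$). The plan is to show that $G(A_k)$ and $G(A)$ both reduce, through the neighbourhood-preserving operation of Lemma~\ref{lem-neighb}, to one and the same graph. First I would record the two structural facts about the tiling: writing a column index of $A_k$ as a pair $(b,c)$ (block column $b$, within-block column $c$) and a row index as $(a,r)$, the entry of $A_k$ in row $(a,r)$, column $(b,c)$ equals the entry $A_{rc}$ of $A$, independently of $a$ and $b$. Consequently every column of $A_k$ is one of the columns of $A$ stacked $m^{k-1}$ times, and two columns of $A_k$ coincide exactly when the corresponding columns of $A$ coincide; symmetrically every row of $A_k$ is a row of $A$ repeated $n^{k-1}$ times, and two coincide exactly when the corresponding rows of $A$ do. Thus $A_k$ has precisely as many distinct columns (respectively, rows) as $A$.

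Next I would translate ``duplicate line'' into ``duplicate vertex.'' In $G(M)$ the clique vertices are indexed by the columns of $M$ and the independent-set vertices by its rows. Two clique vertices are adjacent to each other and to all remaining clique vertices, so they have identical neighbourhoods precisely when their columns are equal; likewise two independent-set vertices are non-adjacent and have identical neighbourhoods precisely when their rows are equal. In either case one vertex is a copy of the other, so Lemma~\ref{lem-neighb} permits deleting it without affecting word-representability. Deleting all repeated columns and then all repeated rows of $A_k$ therefore preserves word-representability, and by the first paragraph the resulting matrix---the reduced form of $A_k$---retains exactly the distinct columns and distinct rows of $A$. The identical reduction applied to $A$ yields the same reduced matrix up to a permutation of rows and columns, which is harmless by Lemma~\ref{Lem-permutrowcolumn}.

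Chaining these equivalences then gives that $G(A_k)$ is word-representable if and only if the reduced graph is, if and only if $G(A)$ is, which is the claim. The step needing the most care is the clean correspondence between equal lines of the matrix and the copy operation of Lemma~\ref{lem-neighb}: one must check that deleting the duplicate columns first does not create spurious coincidences among the rows, which it does not, since the retained rows still include one representative of every distinct row of $A$, so a pair of columns agrees on the representatives if and only if it agrees on all rows. One must also confirm that the argument is unaffected by all-$1$'s rows of $A$, which would enlarge the maximal clique and shift the order-$n$ versus order-$n+1$ distinction in the definition of $S(\cdot)$. This causes no trouble here because the entire reduction is carried out through Lemma~\ref{lem-neighb} and Lemma~\ref{Lem-permutrowcolumn} at the level of the graph, so the precise size of the clique in $S(A_k)$ never enters the argument.
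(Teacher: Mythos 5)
Your proof is correct and takes essentially the same approach as the paper's: both rest on the isomorphism $G^k(A,A)\cong G(A_k)$, the observation that the rows and columns of the tiled matrix $A_k$ simply repeat those of $A$, and the deletion of the resulting copied vertices via Lemma~\ref{lem-neighb}. The only cosmetic difference is that you additionally collapse duplicate lines within $A$ itself and compare fully reduced matrices via Lemma~\ref{Lem-permutrowcolumn}, whereas the paper deletes only the block-repetition classes $R_i$ and $C_j$ and observes that the remaining graph is exactly $G(A)$.
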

\begin{proof}
	Firstly, we label a vertex of $G^k(A,A)$ by $i$ if it is represented by the $i^{th}$ column/row in $S^k(A,A)$.  Note that rows $i,m+i,2m+i, \dots , (m^{k-1}-1)m+i$ in $A_k$ are identical for any $i \in \{ 1,2,\dots ,m \}$, and columns $j,n+j,2n+j,\dots,(n^{k-1}-1)n+j$ in $A_k$ are also identical for any $j \in \{ 1,2,\dots,n \}$.  So, for any $i \in \{1,2,\dots, m\}$, the vertices of $G^k(A,A)$ in $R_i:= \{i+n^k,m+i+n^k,2m+i+n^k, \dots , (m^{k-1}-1)m+i+n^k\}$ have the same neighborhoods. Similarly, any two vertices of $G^k(A,A)$ in $C_j:=\{j,n+j,2n+j,\dots,(n^{k-1}-1)n+j\}$ are connected to the same vertices in the independent set for any $j \in \{1,2,\dots,n\}$. Thus, by Lemma~\ref{lem-neighb}, $G^k(A,A)$ is word-representable if and only if the graph $G$ obtained by deleting all vertices but the smallest one in $R_i$ and $C_j$ for all $i,j\in\{1,2,\ldots,n\}$ is word-representable. But $G$ is exactly $G(A)$, which complete the proof. 
\end{proof}

\begin{corollary} \label{A=B22}
	If $A$ is an $m \times n$ binary matrix such that $m+n \leq 6$, then $G^k(A,A)$ is word-representable for any $k\geq 0$.
\end{corollary}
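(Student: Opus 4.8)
The plan is to reduce the statement entirely to Theorem~\ref{GKAA} together with the observation, recorded just before the statement, that every split graph on at most six vertices is word-representable. First I would dispose of the base case $k=0$ separately, since Theorem~\ref{GKAA} is stated only for $k\geq 1$. Here $M^0(A,A)=\left[0\right]$ is a zero matrix, so by Remark~\ref{M=0} the graph $G^0(A,A)=G(\left[0\right])$ is word-representable, independently of $A$.

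For $k\geq 1$, I would invoke Theorem~\ref{GKAA} directly: it asserts that $G^k(A,A)$ is word-representable if and only if $G(A)$ is word-representable. Thus the whole family collapses to a single question, namely whether $G(A)$ itself is word-representable. Since $A$ is an $m\times n$ binary matrix, the adjacency matrix $S(A)$ is $(m+n)\times(m+n)$, so $G(A)$ is a split graph on exactly $m+n$ vertices. By hypothesis $m+n\leq 6$, hence $G(A)$ is a split graph on at most six vertices.

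Finally I would appeal to the known fact that there is no non-word-representable graph of order less than $6$, and that the only non-word-representable graph on six vertices is the wheel $W_5$, which is not a split graph. Since $G(A)$ is a split graph on at most six vertices, it cannot coincide with $W_5$, and therefore $G(A)$ is word-representable. Combining this with Theorem~\ref{GKAA} yields that $G^k(A,A)$ is word-representable for every $k\geq 1$; together with the $k=0$ case this covers all $k\geq 0$.

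I do not expect any genuine obstacle here: the result is an immediate corollary of Theorem~\ref{GKAA} once the order bound is transported correctly. The only points meriting a moment's care are the separate treatment of the trivial base case $k=0$ (which falls outside the range of Theorem~\ref{GKAA}) and the bookkeeping that $G(A)$ has precisely $m+n$ vertices, so that the hypothesis $m+n\leq 6$ translates exactly into the bound $|V(G(A))|\leq 6$ needed to exclude $W_5$.
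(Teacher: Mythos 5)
Your proof is correct and takes essentially the same route as the paper: reduce the statement via Theorem~\ref{GKAA} to the word-representability of $G(A)$, a split graph on $m+n\leq 6$ vertices, and invoke the fact that the smallest non-word-representable split graph has order $7$ (since $W_5$ is not split). Your separate treatment of $k=0$ and the explicit count of vertices of $G(A)$ merely spell out details the paper's two-line proof leaves implicit.
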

\begin{proof}
	Since the smallest non-word-representable split graph is of order~$7$, all split graphs of orders less than $7$ are word-representable. Hence $G^k(A,A)$ is word-representable for any $m \times n$ matrix $A$ where $m+n \leq 6$.
\end{proof}

Moreover, together with Theorem~\ref{N-W-R-44}, we have the following result.

\begin{corollary} \label{N-W-R-AA44}
	Let $A$ be an $m \times 4$ binary matrix with no all $1$'s row. For any integer $k$, the graph $G^k(A,A)$ is word-representable if and only if the rows and columns of $A$ cannot be permuted to be the matrix containing 
			${\tiny \begin{bmatrix}
			1&1&0&0 \\	1&0&1&0 \\
			0&1&1&0 
			\end{bmatrix} }$, 
		${\tiny \begin{bmatrix}
			1&1&0&0 \\	1&0&1&0 \\
			1&0&0&1 
			\end{bmatrix} }$, 
		${\tiny \begin{bmatrix}
			1&0&1&1 \\	1&1&0&1 \\
			1&1&1&0 
			\end{bmatrix} }$ or 
		${\tiny \begin{bmatrix}
			1&1&1&0 \\	1&1&0&0 \\
			1&0&0&1 \\	1&0&1&1 
			\end{bmatrix} }$ as a submatrix.
\end{corollary}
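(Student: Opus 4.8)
The plan is to obtain the statement as an immediate consequence of chaining Theorem~\ref{GKAA} with Theorem~\ref{N-W-R-44}. The key observation is that the forbidden-submatrix condition on the right-hand side does not depend on $k$ at all, while Theorem~\ref{GKAA} tells us that, for $k\geq 1$, word-representability of $G^k(A,A)$ likewise does not depend on $k$: it holds precisely when $G(A)$ is word-representable. Thus the whole problem collapses to understanding $G(A)$, which is exactly what Theorem~\ref{N-W-R-44} addresses.

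First I would fix $k\geq 1$ and apply Theorem~\ref{GKAA} to reduce the claim to the equivalent statement that $G(A)$ is word-representable if and only if the rows and columns of $A$ cannot be permuted to contain one of the four listed submatrices. Here I would verify that the hypotheses line up: Theorem~\ref{N-W-R-44} requires an $m\times 4$ binary matrix \emph{without an all $1$'s row}, and this is exactly the standing hypothesis on $A$ in the corollary. Hence Theorem~\ref{N-W-R-44} applies verbatim to $A$ and supplies precisely the desired equivalence, completing the reduction.

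Since no genuine computation is involved, there is no serious obstacle; the only point requiring a little care is the precise quantification over $k$ and the boundary case $k=0$. For $k=0$ we have $M^0(A,A)=[0]$, so $G^0(A,A)$ is a single vertex and is trivially word-representable, meaning the substantive content of the corollary concerns $k\geq 1$, where the two theorems combine cleanly. I would then state the proof in essentially one sentence: for $k\geq 1$, $G^k(A,A)$ is word-representable if and only if $G(A)$ is word-representable (by Theorem~\ref{GKAA}), which holds if and only if the four forbidden submatrices cannot be produced by row and/or column permutations of $A$ (by Theorem~\ref{N-W-R-44}).
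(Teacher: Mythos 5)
Your proposal is correct and is exactly the paper's route: the corollary is stated immediately after Theorem~\ref{GKAA} with the remark that it follows ``together with Theorem~\ref{N-W-R-44},'' i.e.\ word-representability of $G^k(A,A)$ reduces to that of $G(A)$, which Theorem~\ref{N-W-R-44} characterizes. Your extra care about the trivial case $k=0$ (where $G^0(A,A)$ is a single vertex, so the equivalence is only substantive for $k\geq 1$) is a sensible clarification the paper leaves implicit.
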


\subsection{The case of $A \neq B$}

In what follows, $A$ and $B$ can be distinct.  

\begin{proposition}\label{ABall0all1}
If every row, or every column, in $m\times n$ matrices $A$ and $B$ is either $0^n$ or  $1^n$, then $G^k(A,B)$ is word-representable for any $k\geq 0$. 
\end{proposition}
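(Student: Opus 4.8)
The plan is to show that the structural hypothesis on $A$ and $B$ is preserved under a single application of the morphism, so that every iterate $M^k(A,B)$ itself has the property that each of its rows (respectively, each of its columns) is constant; word-representability then follows immediately from Proposition~\ref{Mall0all1}. The hypothesis offers two symmetric alternatives — either every row of both $A$ and $B$ is $0^n$ or $1^n$, or every column of both is $0^m$ or $1^m$ — and I would treat the row alternative in full, the column alternative being entirely analogous with the roles of rows/columns and of $m,n$ interchanged. The degenerate case $k=0$ is immediate, since $M^0(A,B)=[0]$ and the corresponding graph is word-representable by Remark~\ref{M=0}.

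For the row alternative I would prove, by induction on $k\geq 1$, that every row of $M^k(A,B)$ is of the form $0\cdots 0$ or $1\cdots 1$. The base case $M^1(A,B)=A$ holds by hypothesis. For the inductive step I use that $M^{k+1}(A,B)$ is obtained from $M^k(A,B)$ by replacing each entry $0$ by the block $A$ and each entry $1$ by the block $B$; thus $M^{k+1}(A,B)$ is an $m^k\times n^k$ array of $m\times n$ blocks in which the block in position $(r,c)$ equals $A$ if the $(r,c)$ entry of $M^k(A,B)$ is $0$ and equals $B$ otherwise. The crucial observation is that a constant row of $M^k(A,B)$ forces an entire block-row of $M^{k+1}(A,B)$ to consist of copies of a single matrix: fixing a block-row index $r$, the induction hypothesis makes row $r$ of $M^k(A,B)$ constant, so all blocks in that block-row are equal to one matrix $X\in\{A,B\}$, and the block-row is $X$ repeated horizontally. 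Each genuine row of it is then a row of $X$ repeated, which is constant because both $A$ and $B$ are row-constant. This is exactly the point where I need the hypothesis to hold for \emph{both} matrices, since I do not control which of $A$, $B$ is selected.

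Having established the invariant, $M^k(A,B)$ has every row equal to $0\cdots 0$ or $1\cdots 1$, so Proposition~\ref{Mall0all1} applies and $G^k(A,B)=G(M^k(A,B))$ is word-representable. The column alternative runs identically after transposing the bookkeeping: there one shows that every column of $M^k(A,B)$ is constant, using that a constant column of $M^k(A,B)$ makes the corresponding block-column of $M^{k+1}(A,B)$ a single matrix repeated vertically, and again invoking Proposition~\ref{Mall0all1}. I expect no genuine obstacle here; the only point requiring care is the block-structure bookkeeping in the inductive step — correctly translating ``row $r$ of $M^k(A,B)$ is constant'' into ``block-row $r$ of $M^{k+1}(A,B)$ is a single repeated matrix'' — together with the observation that preservation of the property genuinely uses that $A$ and $B$ share it.
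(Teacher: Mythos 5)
Your proposal is correct and follows essentially the same route as the paper: the paper's proof likewise observes that if every row (resp.\ column) of $A$ and $B$ is constant then every row (resp.\ column) of $M^k(A,B)$ is $0^{n^k}$ or $1^{n^k}$, and then invokes Proposition~\ref{Mall0all1}. The only difference is that the paper asserts this preservation in one line, whereas you supply the (correct) block-row induction justifying it, including the genuinely needed point that the hypothesis must hold for both $A$ and $B$.
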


\begin{proof} If every row (resp., column) in $A$ and $B$ is either $0^n$ or  $1^n$, then every row (resp., column) in $M^k(A,B)$ is either $0^{n^k}$ or  $1^{n^k}$, so by Proposition~\ref{Mall0all1}, $G^k(A,B)$ is word-representable. \end{proof}

\begin{theorem} \label{Thm-ABcolpermute}
	Let $A$ and $B$ be $m \times n$ binary matrices. Suppose that $A^*$ and $B^*$ are the matrices obtained from reordering columns of $A$ and $B$, respectively, in order given by a permutation $\sigma=\sigma_1\sigma_2\cdots\sigma_n$. Then $G^k(A,B)$ is word-representable if and only if $G^k(A^*,B^*)$ is word-representable for any $k\geq 0$.
\end{theorem}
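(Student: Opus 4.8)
The plan is to reduce the statement to Lemma~\ref{Lem-permutrowcolumn} by showing that $M^k(A^*,B^*)$ is nothing but $M^k(A,B)$ with its columns reordered. Since $G^k(A,B)=G(M^k(A,B))$ and $G^k(A^*,B^*)=G(M^k(A^*,B^*))$, once we know the two matrices differ only by a column permutation, Lemma~\ref{Lem-permutrowcolumn} gives that one graph is word-representable if and only if the other is, for each fixed $k$.

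To set up the correspondence, I would index the $n^k$ columns of $M^k(A,B)$ by addresses $\mathbf c=(c_1,\dots,c_k)\in\{1,\dots,n\}^k$ in block form, where $c_1$ names one of the $n$ top-level column-blocks, $c_2$ names a column-block inside it, and so on down to the innermost single column; rows are indexed analogously by $\mathbf r=(r_1,\dots,r_k)\in\{1,\dots,m\}^k$. Let $\Sigma_k$ be the column permutation acting coordinatewise by $\sigma$, that is $\Sigma_k(c_1,\dots,c_k)=(\sigma_{c_1},\dots,\sigma_{c_k})$. I would then prove by induction on $k$ the entrywise identity
$$M^k(A^*,B^*)[\mathbf r,\mathbf c]=M^k(A,B)[\mathbf r,\Sigma_k(\mathbf c)],$$
which says exactly that $M^k(A^*,B^*)$ is obtained from $M^k(A,B)$ by reordering columns according to $\Sigma_k$ (the rows being left untouched).

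The induction is driven by the defining recursion $M^k(A,B)=\varphi(M^{k-1}(A,B))$, where $\varphi$ replaces every $0$-entry by $A$ and every $1$-entry by $B$. Read at the level of a single entry, this says that $M^k(A,B)[\mathbf r,\mathbf c]$ equals $A[r_k,c_k]$ if the bit $M^{k-1}(A,B)[(r_1,\dots,r_{k-1}),(c_1,\dots,c_{k-1})]$ is $0$, and equals $B[r_k,c_k]$ if that bit is $1$; the identical recursion holds for $A^*,B^*$. The base case $k=1$ is exactly the definition of column reordering, $A^*[r_1,c_1]=A[r_1,\sigma_{c_1}]$. For the step, I apply the recursion to the left-hand side, use $A^*[r_k,c_k]=A[r_k,\sigma_{c_k}]$ and $B^*[r_k,c_k]=B[r_k,\sigma_{c_k}]$, and invoke the inductive hypothesis for $M^{k-1}$ to conclude that the selecting bit is literally the same on both sides; applying the recursion to the right-hand side at the permuted column then matches the two expressions.

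The only delicate point — and the step I expect to be the real obstacle — is that the morphism is not uniform: $0$ and $1$ are sent to the \emph{different} matrices $A$ and $B$, so one must make sure the column permutation does not disturb which block ($A$ or $B$) sits in which place. This is handled by stating the inductive hypothesis at the level of the $0/1$ matrix $M^{k-1}$, i.e.\ as the bit-for-bit equality above rather than merely as equivalence of word-representability; this guarantees the block-selecting bit agrees on both sides and is precisely why the \emph{same} permutation $\sigma$ must be applied to both $A$ and $B$. The case $k=0$ is trivial, since $M^0(A^*,B^*)=[0]=M^0(A,B)$, and with the entrywise identity in hand Lemma~\ref{Lem-permutrowcolumn} completes the proof.
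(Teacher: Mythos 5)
Your proposal is correct and follows essentially the same route as the paper: both prove by induction on $k$ that $M^k(A^*,B^*)$ is exactly $M^k(A,B)$ with its columns permuted (the inductive hypothesis being a matrix identity, not merely graph equivalence, so that the bit selecting $A$ versus $B$ blocks is preserved), and then conclude via Lemma~\ref{Lem-permutrowcolumn}. The only difference is bookkeeping: you track entries through hierarchical addresses and exhibit the permutation explicitly as the coordinatewise action $\Sigma_k(c_1,\dots,c_k)=(\sigma_{c_1},\dots,\sigma_{c_k})$, whereas the paper tracks whole columns and composes a block permutation $\tau$ with $\sigma$ inside each block.
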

\begin{proof}
The case $k=0$ is trivial, so assume that $k\geq 1$. We claim that $M^k(A^*,B^*)$ is obtained from a permutation of columns in $M^k(A,B)$. We will prove the claim by induction on $k$. Note that $M^1(A,B)=A$ and $M^1(A^*,B^*)=A^*$. So $M^1(A^*,B^*)$ is the matrix obtained from reordering columns of $M^1(A,B)$. Suppose that $l$ is a positive integer and $M^l(A^*,B^*)$ is the matrix obtained from reordering columns of $M^l(A,B)$ in order given by a permutation $\tau=\tau_1\tau_2\cdots \tau_{n^l}$. Let $M^l(A,B)= \begin{bmatrix} C_1 & C_2 & \cdots & C_{n^l}	\end{bmatrix}$ where $C_i$ is the $i^{th}$ column of $M^l(A,B)$. Then $M^l(A^*,B^*)= \begin{bmatrix} C_{\tau_1} & C_{\tau_2} & \cdots & C_{\tau_{n^l}} \end{bmatrix}$. For the next iteration of morphism, each column $C_i$ of $M^l(A,B)$ is mapped to $n$ columns $C_{i,1}, C_{i,2}, \ldots, C_{i,n}$, and each column $C_{\tau_i}$ of $M^l(A^*,B^*)$ is mapped to $n$ columns  
$C_{\tau_i,\sigma_1}, C_{\tau_i,\sigma_2}, \ldots, C_{\tau_i,\sigma_n}$. So we have $$M^{l+1}(A,B)= \left[ C_{1,1} ~ C_{1,2} ~ \cdots  C_{1,n} ~ \cdots ~ C_{n^l,1} ~ C_{n^l,2} ~ \cdots ~ C_{n^l,n} \right]$$ and 
	\begin{align*}
	M^{l+1}(A^*,B^*) = [&~ C_{\tau_1,\sigma_1} ~ C_{\tau_1,\sigma_2} ~ \cdots ~ C_{\tau_1,\sigma_n} ~    \cdots ~ C_{\tau_{n^l},\sigma_1} \\ &C_{\tau_{n^l},\sigma_2}  ~ \cdots ~ C_{\tau_{n^l},\sigma_n}~].
	\end{align*}
A group of columns $C_{i,1},C_{i,2},\ldots,C_{i,n}$ is called	{\em block} $B_i$. Firstly, we can see that reordering the blocks $B_1,B_2,\ldots,B_{n^l}$ of $M^{l+1}(A,B)$ in order given by $\tau$, and then reordering columns in every block $B_i$ in order given by $\sigma$, yields the matrix $M^{l+1}(A^*,B^*)$. Thus, $M^{l+1}(A^*,B^*)$ is obtained by a column permutation of $M^{l+1}(A,B)$ and our claim is true. Hence, by Lemma~\ref{Lem-permutrowcolumn}, $G^k(A,B)$ is word-representable if and only if $G^k(A^*,B^*)$ is word-representable for any positive integer $k$. 
\end{proof}

Next theorem is a natural extension of Theorem~\ref{Thm-ABcolpermute} to the case of row permutations, and it can be proved in a similar way to the proof of Theorem~\ref{Thm-ABcolpermute}, so we omit the proof.

\begin{theorem}\label{Thm-ABcol&rowpermute}
	Let $A$ and $B$ be $m \times n$ binary matrices. Suppose that $A^*$ and $B^*$ are the matrices obtained from reordering rows of $A$ and $B$, respectively, in order given by the same permutation. Then $G^k(A,B)$ is word-representable if and only if $G^k(A^*,B^*)$ is word-representable for any $k\geq 0$.
\end{theorem}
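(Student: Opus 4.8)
The plan is to mimic the proof of Theorem~\ref{Thm-ABcolpermute} almost verbatim, replacing column permutations by row permutations throughout. The goal is to establish the analogous claim: if $A^*$ and $B^*$ are obtained from $A$ and $B$ by reordering rows according to a single permutation $\sigma=\sigma_1\sigma_2\cdots\sigma_m$, then $M^k(A^*,B^*)$ is obtained from $M^k(A,B)$ by a permutation of its rows. Once this claim is in hand, Lemma~\ref{Lem-permutrowcolumn} immediately yields the equivalence of word-representability of $G^k(A,B)$ and $G^k(A^*,B^*)$.

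First I would dispose of the case $k=0$ (where $M^0=[0]$ and the statement is trivial) and then argue by induction on $k$. The base case $k=1$ is direct since $M^1(A,B)=A$ and $M^1(A^*,B^*)=A^*$, which is a row-reordering of $A$ by hypothesis. For the inductive step, suppose $M^l(A^*,B^*)$ is obtained from $M^l(A,B)$ by reordering rows according to some permutation $\tau=\tau_1\cdots\tau_{m^l}$. Writing $M^l(A,B)$ as a stack of rows $R_1,\ldots,R_{m^l}$, one has $M^l(A^*,B^*)$ equal to the stack $R_{\tau_1},\ldots,R_{\tau_{m^l}}$. Passing to the next iteration, each row $R_i$ of $M^l(A,B)$ expands into $m$ rows $R_{i,1},\ldots,R_{i,m}$, and each row $R_{\tau_i}$ of $M^l(A^*,B^*)$ expands into $m$ rows $R_{\tau_i,\sigma_1},\ldots,R_{\tau_i,\sigma_m}$, precisely because the individual symbols in row $\tau_i$ are unchanged and each $[0]$ (resp.\ $[1]$) maps to $A$ (resp.\ $B$), while $A^*$ (resp.\ $B^*$) is just $A$ (resp.\ $B$) with its rows permuted by $\sigma$.

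The crux is then the same two-level bookkeeping as in Theorem~\ref{Thm-ABcolpermute}: grouping the rows of $M^{l+1}(A,B)$ into blocks $B_i=\{R_{i,1},\ldots,R_{i,m}\}$, one checks that first reordering the blocks $B_1,\ldots,B_{m^l}$ according to $\tau$, and then permuting the $m$ rows inside each resulting block according to $\sigma$, produces exactly $M^{l+1}(A^*,B^*)$. This shows $M^{l+1}(A^*,B^*)$ is a row permutation of $M^{l+1}(A,B)$, completing the induction. Applying Lemma~\ref{Lem-permutrowcolumn} finishes the argument. I do not anticipate a genuine obstacle here; the only point requiring mild care is verifying that the expansion of a row under one further iteration of the morphism respects the $\sigma$-reordering at the block-internal level, which is where the hypothesis that $A^*,B^*$ use the \emph{same} row permutation $\sigma$ is essential. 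This is the direct dual of the block argument already carried out for columns, which is why the paper is justified in omitting the details.
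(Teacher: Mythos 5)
Your proposal is correct and is exactly the argument the paper has in mind: the paper explicitly omits this proof as ``similar to the proof of Theorem~\ref{Thm-ABcolpermute},'' and your row-block induction (blocks reordered by $\tau$, rows within each block by $\sigma$, then Lemma~\ref{Lem-permutrowcolumn}) is the faithful dual of that column argument. No gaps; the one point you flag as needing care -- that substituting $A^*$, $B^*$ permutes each expanded block internally by the \emph{same} $\sigma$ -- is handled correctly.
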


So, we can reorder rows and columns of given matrices $A$ and $B$ while preserving the word-representability of $G^k(A,B)$. If $A$ contains at least one $0$, we can reorder rows and columns of $A$ to make  the leftmost bottom entry be a $0$ (the matrix $B$ will be changed by the same permutation of rows and columns as those applied to $A$). Thus, in what follows, if $A$ is not all-one matrix, w.l.o.g. we can assume that the leftmost bottom entry of $A$ is always 0. Then, the $m\times n$ leftmost bottom submatrix of $M^2(A,B)$ is $A$  since $M^1(A,B)=A$. Moreover, the $m^{k-1} \times n^{k-1}$ leftmost bottom submatrix of $M^{k}(A,B)$ is $M^{k-1}(A,B)$. Thus, the limit $\lim_{k\to\infty}M^k(A,B)$, called a {\em fixed point of the morphism}, is well-defined. So, we have that $G^i(A,B)$ is an induced subgraph of $G^k(A,B)$ if $i \leq k$ and the notion of the infinite split graph $G(A,B)$ is well-defined in the case when $A$ has a 0 as the leftmost bottom entry. So we are interested in the smallest integer $l$ (possibly non-existing) that $G^l(A,B)$ is non-word-representable for given $A$ and $B$ (then $G^i(A,B)$ is non-word-representable for $i\geq l$).

\begin{definition}\label{IWR-def} Suppose that a matrix $A$ has a $0$ as the leftmost bottom entry. The {\em index of word-representability} $\IWR$$(A,B)$ of an infinite split  graph $G(A,B)$ is the smallest integer $l$ such that $G^l(A,B)$ is non-word-representable. If such $l$ does not exist, that is, if $G^l(A,B)$ is word-representable for all $l$, then $l:=\infty$.  If the leftmost bottom entry of $A$ is $1$ (so that the $\lim_{k\to\infty}M^k(A,B)$ may not be well-defined as the sequence of graphs $G^k(A,B)$, for $k\geq 0$, may not be a chain of induced subgraphs) then  $\IWR$$(A,B)$ is still defined in the same way even though $G(A,B)$ may not be defined. \end{definition}

Note that since $G^0(A,B)$ is a graph with one vertex, we have $\IWR$$(A,B) \geq 1$.  Even though Definition~\ref{IWR-def} is very similar to the respective definition of the index of word-representability of an infinite {\em Toeplitz graph} in \cite{WRofToeplitz} (where the index in our context would be the maximum $l$ such that $G^l(A,B)$ is word-representable), it is more flexible as it makes sense in the situation when the leftmost bottom entry of $A$ is 1.  

\begin{theorem}\label{thm-submat}
	Let $A= \left[ a_{ij} \right]$ and $B= \left[ b_{ij} \right]$ be $m \times n$ binary matrices, and 

\noindent
$C = \begin{bmatrix}
	a_{p_1q_1} & a_{p_1q_2} & \cdots & a_{p_1q_t} \\ a_{p_2q_1} & a_{p_2q_2} & \cdots & a_{p_2q_t} \\ \vdots&\vdots&\ddots&\vdots \\ a_{p_sq_1} & a_{p_sq_2} & \cdots & a_{p_sq_t} \end{bmatrix} $ and $D = \begin{bmatrix}
	b_{p_1q_1} & b_{p_1q_2} & \cdots & b_{p_1q_t} \\ b_{p_2q_1} & b_{p_2q_2} & \cdots & b_{p_2q_t} \\ \vdots&\vdots&\ddots&\vdots \\ b_{p_sq_1} & b_{p_sq_2} & \cdots & b_{p_sq_t} \end{bmatrix} $ \\

\noindent
be $s \times t$ submatrices of $A$ and $B$, respectively, where  $1 \leq p_1 < p_2< \cdots < p_s \leq m$ and $1 \leq q_1 < q_2< \cdots < q_t  \leq n$. For any positive integer $k$, if $G^k(C,D)$ is non-word-representable, then $G^k(A,B)$ is non-word-representable. 
\end{theorem}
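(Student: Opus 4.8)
The plan is to show that $G^k(C,D)$ is isomorphic to an \emph{induced} subgraph of $G^k(A,B)$; since the class of word-representable graphs is hereditary (as recalled in the introduction), the non-word-representability of $G^k(C,D)$ then immediately forces $G^k(A,B)$ to be non-word-representable. The whole argument thus reduces to a single structural claim at the level of matrices, namely that $M^k(C,D)$ occurs as a submatrix of $M^k(A,B)$ in the sense of selecting a subset of rows and a subset of columns.

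First I would record the easy translation between submatrices and induced subgraphs. If $M'$ is obtained from a binary matrix $M$ by keeping the rows indexed by a set $P$ and the columns indexed by a set $Q$, then in $G(M)$ the column-vertices form a clique and the row-vertices an independent set, with a cross edge present exactly when the corresponding entry of $M$ is $1$. Restricting $G(M)$ to the clique-vertices indexed by $Q$ together with the independent-set vertices indexed by $P$ therefore yields an induced subgraph whose cross-adjacencies are governed precisely by the entries of $M'$; hence this induced subgraph is isomorphic to $G(M')$, where reordering the chosen rows or columns only relabels vertices and is harmless by Lemma~\ref{Lem-permutrowcolumn}.

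The core step is to prove that $M^k(C,D)$ is such a submatrix of $M^k(A,B)$, and I would do this by induction on $k$. For $k=1$ we have $M^1(C,D)=C$ and $M^1(A,B)=A$, and $C$ sits inside $A$ in rows $p_1,\dots,p_s$ and columns $q_1,\dots,q_t$ by hypothesis. For the inductive step, suppose $M^k(C,D)$ coincides with the submatrix of $M^k(A,B)$ picked out by a row set $P_k$ and a column set $Q_k$, so that corresponding entries are literally equal. Passing from level $k$ to level $k+1$, each entry of $M^k(A,B)$ is expanded into an $m\times n$ block equal to $A$ if the entry is $0$ and to $B$ if it is $1$, and likewise each entry of $M^k(C,D)$ is expanded into an $s\times t$ block equal to $C$ or $D$ by the same rule. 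Because corresponding entries agree, a $0$-block of $M^k(C,D)$ (namely $C$) sits inside the corresponding $0$-block of $M^k(A,B)$ (namely $A$) exactly in rows $p_1,\dots,p_s$ and columns $q_1,\dots,q_t$, and symmetrically a $1$-block ($D$ inside $B$). Selecting, within each already-chosen block, these same local rows and columns exhibits $M^{k+1}(C,D)$ as the submatrix of $M^{k+1}(A,B)$ determined by the refined index sets $P_{k+1}$ and $Q_{k+1}$, completing the induction.

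The step I expect to require the most care is precisely this inductive bookkeeping: one must be explicit that the morphism expands each matrix entry into a block depending \emph{only} on whether that entry is $0$ or $1$, so that the hypothesis ``corresponding entries of $M^k(C,D)$ and $M^k(A,B)$ agree'' guarantees that matching block-types ($C$ sitting in $A$, and $D$ sitting in $B$) are used, and that the within-block selection $p_1,\dots,p_s$, $q_1,\dots,q_t$ is identical across all blocks. Distinctness of the $s^{k+1}$ selected rows and $t^{k+1}$ selected columns then follows from $p_1<\cdots<p_s$ and $q_1<\cdots<q_t$ together with the disjointness of distinct blocks. With the submatrix claim established, the induced-subgraph observation of the second paragraph and the hereditary property of word-representability finish the proof.
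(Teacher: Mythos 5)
Your proposal is correct and follows essentially the same route as the paper: an induction on $k$ showing that $M^k(C,D)$ is a submatrix of $M^k(A,B)$ (using that the morphism expands $0$-entries to $A$ resp.\ $C$ and $1$-entries to $B$ resp.\ $D$, so matching entries yield matching block types with the same within-block selection), followed by the observation that a submatrix corresponds to an induced split subgraph, and the hereditary property of word-representability. Your write-up is in fact somewhat more explicit than the paper's about the submatrix-to-induced-subgraph translation and the distinctness of the selected indices, but the argument is the same.
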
 

\begin{proof}
	First, we will prove by induction that $M^k(C,D)$ is a submatrix of $M^k(A,B)$ for any positive integer $k$.  It is obvious that $M^1(C,D)= C$ is a submatrix of $M^1(A,B) = A$. Let $l$ be a positive integer such that $M^l(C,D)$ is a submatrix of $M^l(A,B)$ on the columns $c_1,c_2,\ldots,c_{t^l}$ and rows $r_1,r_2,\ldots,r_{s^l}$. For the next iteration of morphism, $M^{l+1}(A,B)$ is formed by replacing each entry of $M^l(A,B)$ with either $A$ or $B$. So the columns $(c_i-1)n+p_j$ for $1 \leq i \leq t^l, 1 \leq j \leq s$, and rows $(r_i-1)m+q_j$ for $1 \leq i \leq s^l$, $1 \leq j \leq t$, form the matrix $M^{l+1}(C,D)$. Hence $M^k(C,D)$ is a submatrix of $M^k(A,B)$ for any $k \geq 1$. Therefore, $G^k(A,B)$ contains $G^k(C,D)$ as an induced subgraph for $k \geq 1$. As the property of word-representability is hereditary, we have that non-word-representability of $G^k(C,D)$ implies non-word-representability of $G^k(A,B)$.
\end{proof}

Theorem~\ref{thm-submat} gives a useful tool to study non-word-representability of $G^k(A,B)$ for larger $A$ and $B$. Indeed, a starting point to justify suspected non-word-representability of $G^k(A,B)$ can be analysis of smaller submatrices of $A$ and $B$. This is one of our motivation points to conduct a systematic study of $\IWR$$(A,B)$ for $2 \times 2$ matrices, to be done in the next section, as they are smallest submatrices that can be used to show non-word-representability of $G^k(A,B)$ for some $A$, $B$ and~$k$.

\section{Classification of word-representable split graphs defined by iteration of morphisms using two $2 \times 2$ matrices}\label{class-2x2-sec}

A summary of our classification of  word-representability of $G^k(A,B)$ for $2 \times 2$ matrices $A$ and $B$ can be found in Tables~\ref{tab:2x2-1} and~\ref{tab:2x2-2}, where the index of word-representability $\IWR$$(A,B)$ is given along with a reference, or a comment to the respective result.

\subsection{The case of $A$ is not all-one matrix}

For any $2 \times 2$ matrices $A$ and $B$, the graph $G^1(A,B)$ is a split graph of order 4 which is always word-representable. Then, $\IWR$$(A,B) \geq 2$. However,  $2 \times 2$ matrices $A$ and $B$ such that $G^2(A,B)$ is non-word-representable can be found.

\begin{proposition}\label{caseindex=2}
		For $A=\begin{bmatrix} 1&0\\0&0 \end{bmatrix}$ and $B=\begin{bmatrix} 0&1\\1&1 \end{bmatrix}$, $\IWR$$(A,B) = 2$.
\end{proposition}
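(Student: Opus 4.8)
The plan is to push everything onto the second iterate. Since $G^1(A,B)$ is a split graph on four vertices, it is word-representable, so $\IWR$$(A,B)\geq 2$ and it suffices to prove that $G^2(A,B)$ is non-word-representable. My first step is therefore to compute $M^2(A,B)$ by applying the morphism to $M^1(A,B)=A$: replacing the single $1$ of $A$ by $B$ and each of the three $0$'s by $A$ produces the block matrix $\left[\begin{smallmatrix} B & A \\ A & A\end{smallmatrix}\right]$, that is,
$$M^2(A,B)=\begin{bmatrix} 0&1&1&0\\ 1&1&0&0\\ 1&0&1&0\\ 0&0&0&0\end{bmatrix}.$$
This matrix has no all-$1$'s row, so the characterization tools of Section~\ref{prelim-sec} apply directly to $G^2(A,B)=G(M^2(A,B))$.

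The cleanest way to finish is via Theorem~\ref{N-W-R-44}. I would observe that reading the rows of $M^2(A,B)$ in the order $2,3,1$ (keeping all four columns) produces exactly the submatrix $\left[\begin{smallmatrix}1&1&0&0\\1&0&1&0\\0&1&1&0\end{smallmatrix}\right]$, which is the first of the four forbidden configurations listed in Theorem~\ref{N-W-R-44}. Since $M^2(A,B)$ is a $4\times 4$ binary matrix with no all-$1$'s row, Theorem~\ref{N-W-R-44} then immediately gives that $G^2(A,B)$ is non-word-representable. Combined with $\IWR$$(A,B)\geq 2$, this yields $\IWR$$(A,B)=2$.

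Alternatively, one can argue directly from Theorem~\ref{Thm-genpermutecolumn} in the style of Example~\ref{Ex-8x8}: ignoring the all-zero fourth row, the three nonzero rows have their $1$'s in the column pairs $\{2,3\}$, $\{1,2\}$ and $\{1,3\}$, so any column permutation making every row's $1$'s cyclically consecutive would have to place columns $1$, $2$, $3$ pairwise adjacent in a cyclic order of four columns, which is impossible. Either route is short; the only point that genuinely has to be gotten right is the block computation of $M^2(A,B)$ and the recognition that its rows realize a forbidden pattern, so I do not anticipate a real obstacle beyond this bookkeeping. I would present the Theorem~\ref{N-W-R-44} argument for its brevity, with the column-permutation observation as an optional sanity check.
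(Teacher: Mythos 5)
Your proposal is correct and matches the paper's proof in all essentials: both compute $M^2(A,B)=\left[\begin{smallmatrix}0&1&1&0\\1&1&0&0\\1&0&1&0\\0&0&0&0\end{smallmatrix}\right]$ and invoke Theorem~\ref{N-W-R-44} with the first forbidden configuration (the $T_1$ pattern), the paper exhibiting it via the column permutation $2314$ where you use the row reordering $2,3,1$ --- an immaterial difference since the theorem allows both row and column permutations. Your added lower-bound remark and the alternative cyclic-consecutivity argument via Theorem~\ref{Thm-genpermutecolumn} are both sound but do not change the fact that this is the same approach.
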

\begin{proof}
	We have $M^2(A,B)= \begin{bmatrix}
	0&1&1&0\\1&1&0&0\\1&0&1&0\\0&0&0&0
	\end{bmatrix}$. Reordering columns of $M^2(A,B)$ in order given by the permutation $2314$ yields the matrix $\begin{bmatrix}
	1&1&0&0\\1&0&1&0\\0&1&1&0\\0&0&0&0
	\end{bmatrix}$. So, by Theorem~\ref{N-W-R-44}, $G^2(A,B)$ is non-word-representable ($G^2(A,B)$ contains $T_1$ as an induced subgraph). 
\end{proof}

\begin{remark}\label{REMindex=2}
Permuting rows or/and columns in $A$ and $B$ similarly to Proposition~\ref{caseindex=2}, we see that $\IWR$$(A,B)=2$ for $A$ and $B$ in Cases 31, 45, 81 and 101 in Tables~\ref{tab:2x2-1} and~\ref{tab:2x2-2} (in Case 81 $G^2(A,B)$ contains $T_3$, and in the other cases $G^2(A,B)$ contains $T_1$). 
\end{remark}

\begin{proposition}\label{caseindex=3}
	For $A=\begin{bmatrix} 1&0\\0&0 \end{bmatrix}$ and $B=\begin{bmatrix} 0&1\\0&0 \end{bmatrix}$, $\IWR$$(A,B) = 3$.
\end{proposition}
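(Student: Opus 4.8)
The plan is to prove the two inequalities $\IWR$$(A,B)\ge 3$ and $\IWR$$(A,B)\le 3$ separately. Since $G^1(A,B)$ is a split graph on four vertices, it is word-representable, so (as already noted in the text) $\IWR$$(A,B)\ge 2$; to upgrade this to $\IWR$$(A,B)\ge 3$ it suffices to show that $G^2(A,B)$ is word-representable, and to obtain $\IWR$$(A,B)\le 3$ it suffices to show that $G^3(A,B)$ is non-word-representable. The first step is therefore to compute the iterates explicitly. Applying the morphism to $M^1(A,B)=A$ (whose entries are $1,0,0,0$, so they map to $B,A,A,A$) produces the $4\times 4$ matrix $M^2(A,B)$ with block form $\left[\begin{smallmatrix}B&A\\A&A\end{smallmatrix}\right]$, and one further iteration produces the $8\times 8$ matrix $M^3(A,B)$.

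For the lower bound I would feed $M^2(A,B)$ into Theorem~\ref{Thm-genpermutecolumn} (indeed even the simpler Theorem~\ref{Thm-permuterow010} suffices). After discarding its all-zero rows, the two non-trivial rows are $0110$ and $1010$, and reordering the columns by the permutation $2314$ turns both into the form $0^r1^s0^t$. Hence $G^2(A,B)$ is word-representable and $\IWR$$(A,B)\ge 3$.

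The substance of the proof is the upper bound. Here I would write out $M^3(A,B)$ and observe that, after deleting the all-zero rows, its non-zero rows have their $1$'s in the column sets $\{1,4,6,7\}$, $\{1,3,5,7\}$ and $\{2,3,6,7\}$, one of the sets $\{1,3,5,7\}$ occurring twice (so that, by Lemma~\ref{lem-neighb}, the repeated row may be ignored). This is exactly the configuration analyzed in Example~\ref{Ex-8x8}. By Theorem~\ref{Thm-genpermutecolumn}, $G^3(A,B)$ is word-representable if and only if there is a column permutation $\rho$ making each of these sets cyclically consecutive. The combinatorial heart of the argument is to show no such $\rho$ exists: the sets $\{1,4,6,7\}$ and $\{1,3,5,7\}$ are two length-$4$ cyclic intervals of an $8$-element cyclic order meeting in exactly the pair $\{1,7\}$, which forces $1$ and $7$ to be adjacent and pins the cyclic order down to consecutive blocks $\{4,6\},\{1,7\},\{3,5\}$ followed by the remaining two columns $\{2,8\}$; but then the set $\{2,3,6,7\}$ contributes exactly one element to each of these four consecutive pairs, so it can never occupy four consecutive positions, a contradiction.

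The main obstacle is precisely this last impossibility step, and it is already carried out in Example~\ref{Ex-8x8}; in the write-up I would therefore either cite that example or reproduce its short block-counting argument verbatim. Once the non-existence of $\rho$ is established, Theorem~\ref{Thm-genpermutecolumn} gives that $G^3(A,B)$ is non-word-representable, yielding $\IWR$$(A,B)\le 3$ and hence $\IWR$$(A,B)=3$.
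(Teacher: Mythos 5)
Your proof is correct and takes essentially the same route as the paper: word-representability of $G^2(A,B)$ via a column permutation and Theorem~\ref{Thm-permuterow010}, and non-word-representability of $G^3(A,B)$ by precisely the cyclic-consecutivity obstruction of Example~\ref{Ex-8x8}, which the paper simply cites. Incidentally, your identification of the repeated row pattern $\{1,3,5,7\}$ reflects the correct computation (row 7 of the paper's printed $M^3(A,B)$ is a typo and should read $10101010$), and your contradiction is correctly derived from the row $\{2,3,6,7\}$, where the example's text misattributes it to the ``second bullet point''.
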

\begin{proof}
	We have $$M^2(A,B)= \begin{bmatrix}
	0&1&1&0\\0&0&0&0\\1&0&1&0\\0&0&0&0
	\end{bmatrix} \text{~and~} M^3(A,B)=\begin{bmatrix}
	1&0&0&1&0&1&1&0\\
	0&0&0&0&0&0&0&0\\
	1&0&1&0&1&0&1&0\\
	0&0&0&0&0&0&0&0\\
	0&1&1&0&0&1&1&0\\
	0&0&0&0&0&0&0&0\\
	1&0&0&1&0&1&0&1\\
	0&0&0&0&0&0&0&0
	\end{bmatrix}.$$ Reordering columns of $M^2(A,B)$ in order given by  the permutation $4231$ yields the matrix $\begin{bmatrix}
	0&1&1&0\\0&0&0&0\\0&0&1&1\\0&0&0&0
	\end{bmatrix}$. By Theorem \ref{Thm-permuterow010}, we have $G^2(A,B)$ is word-representable. However, we have shown in Example \ref{Ex-8x8} that $G^3(A,B)$ is non-word-representable. So $\IWR$$(A,B) = 3$. 
\end{proof}

\begin{remark} \label{REMindex=3}
	Proposition~\ref{caseindex=3} gives Case 4 in Tables~\ref{tab:2x2-1}. In each of Cases 5, 10, 11, 12, 14, 15, 17, 57, 66, 67, 68, 71, 72, 77, 78, 89, 99, 102, 104, 106, 108, 109, 111 and 112 in Tables~\ref{tab:2x2-1} and~\ref{tab:2x2-2}, $M^2(A,B)$ has a permutation satisfying the conditions of Theorem~\ref{Thm-ABcolpermute} and $M^3(A,B)$ does not (similarly to Proposition~\ref{caseindex=3}). So $\IWR$$(A,B)=3$ for $A$ and $B$ in these cases. Moreover, by Theorem~\ref{Thm-ABcolpermute}, column and row permutations of $A$ and $B$ give the same $\IWR$. Consequently, we also have $\IWR$$(A,B)=3$ for $A$ and $B$ in Cases 19, 22, 24, 25, 28, 29, 32, 33, 36, 37, 39, 40, 42, 46, 47, 49, 58, 69, 70, 75, 76, 79, 80 and 90.
\end{remark}

\begin{proposition}\label{caseindex=4}
	For $A=\begin{bmatrix} 1&0\\0&0 \end{bmatrix}$ and $B=\begin{bmatrix} 0&0\\0&1 \end{bmatrix}$, $\IWR$$(A,B)=4$.  
\end{proposition}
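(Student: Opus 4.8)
The plan is to establish the two inequalities $\IWR$$(A,B)\ge 4$ and $\IWR$$(A,B)\le 4$ separately, by proving that $G^3(A,B)$ is word-representable while $G^4(A,B)$ is not. Since the leftmost bottom entry of $A$ is $0$, the graphs $G^k(A,B)$ form a chain of induced subgraphs, so word-representability of $G^3(A,B)$ automatically delivers that of $G^1(A,B)$ and $G^2(A,B)$; hence these two inequalities are all that is needed.

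First I would compute $M^2(A,B)$, $M^3(A,B)$ and the relevant rows of $M^4(A,B)$ by iterating the substitution $[0]\mapsto A$, $[1]\mapsto B$. For the lower bound, I would exhibit a single permutation of the columns of $M^3(A,B)$ after which every row has the form $0^r1^s0^t$. Writing column indices as $0,\dots,7$, the nonzero rows of $M^3(A,B)$ have supports $\{0,2,6\}$, $\{0,4,6\}$, $\{2,6\}$, $\{1,5\}$ and $\{0,2,4,6\}$ (together with singletons), and the column order $2,6,0,4,1,5,3,7$ turns each of these into an interval. Theorem~\ref{Thm-permuterow010} then gives that $G^3(A,B)$ is word-representable, so $\IWR$$(A,B)\ge 4$.

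For the upper bound I would apply Theorem~\ref{Thm-genpermutecolumn} to $M^4(A,B)$ (which has no all-$1$'s row, since every row is supported on at most eight of the sixteen columns) and show that condition~(i) already fails, i.e.\ no column permutation makes every row of the form $0^r1^s0^t$ or $1^r0^s1^t$. The key structural fact is that the rows of $M^4(A,B)$ split into those supported entirely on the eight even-indexed columns and those supported on the eight odd-indexed columns. Among the even-supported rows I would isolate four: one with support equal to all eight even columns, one missing exactly the even column $x=10$, one missing exactly the even column $y=6$, and one with the four-element support $\{2,6,10,14\}$. Reading any admissible column order cyclically, condition~(i) forces the $1$'s of each row to occupy a cyclic arc. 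The all-even row then forces the eight even columns to form one contiguous arc $E$, flanked on both sides by odd columns (which are $0$'s in every even-supported row); consequently no even-supported row can wrap, and each must be an ordinary sub-interval of $E$. The two ``all-but-one'' rows push $x$ and $y$ to the two ends of $E$, and the row supported on $\{2,6,10,14\}$, which contains both $x$ and $y$ yet is a proper subset of the even columns, cannot then be an interval of $E$. This contradiction shows condition~(i) is unsatisfiable, so $G^4(A,B)$ is non-word-representable and $\IWR$$(A,B)\le 4$.

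The main obstacle is the non-word-representability step. The temptation is to try to make all row-supports ordinary intervals, but that is genuinely impossible here and also not what is required; the right move is to work with the cyclic-arc reformulation of condition~(i) and to pin down exactly the four even-supported rows that generate the contradiction. Verifying that these four rows really occur — equivalently, tracking how the all-zero row and the $\{0,2,4,6\}$ row of $M^3(A,B)$ evolve under one further application of the morphism — is the computational heart of the argument, after which the end-forcing contradiction for the $\{2,6,10,14\}$ row is immediate.
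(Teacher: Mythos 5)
Your proof is correct and matches the paper's approach: word-representability of $G^3(A,B)$ via an explicit column permutation and Theorem~\ref{Thm-permuterow010}, and non-word-representability of $G^4(A,B)$ by showing no column order makes every row cyclically consecutive as demanded by condition (i) of Theorem~\ref{Thm-genpermutecolumn} --- the paper derives its contradiction from the three rows with supports (in your $0$-indexed notation) $\{0,2,4,6,8,12,14\}$, $\{2,4,6,10,14\}$ and $\{0,2,4,8,10,12,14\}$, whereas you use the four rows $E$, $E\setminus\{10\}$, $E\setminus\{6\}$ and $\{2,6,10,14\}$ with $E$ the set of even columns, all of which do occur in $M^4(A,B)$, so your arc-endpoint contradiction is equally valid (and your explicit appeal to Theorem~\ref{Thm-genpermutecolumn} is, if anything, a cleaner logical framing than the paper's nominal use of Theorem~\ref{Thm-permuterow010}). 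One tiny expository slip: the two ``all-but-one'' rows arise from the singleton rows $\{5\}$ and $\{3\}$ of $M^3(A,B)$, not from the all-zero row and the $\{0,2,4,6\}$ row mentioned in your closing remark, which instead produce $E$ and $\{2,6,10,14\}$; this does not affect the argument, since you identified the four rows correctly by their supports.
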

\begin{proof}
	We have $$M^3(A,B)=\begin{bmatrix}
	1&0&1&0&0&0&1&0\\0&0&0&0&0&1&0&0\\1&0&0&0&1&0&1&0\\0&0&0&1&0&0&0&0\\0&0&1&0&0&0&1&0\\0&1&0&0&0&1&0&0\\1&0&1&0&1&0&1&0\\0&0&0&0&0&0&0&0
	\end{bmatrix}.$$ Reordering columns of $M^3(A,B)$ in order given by the permutation $51732648$ yields the matrix $\begin{bmatrix}
	0&1&1&1&0&0&0&0\\
	0&0&0&0&0&1&0&0\\
	1&1&1&0&0&0&0&0\\
	0&0&0&0&0&0&1&0\\
	0&0&1&1&0&0&0&0\\
	0&0&0&0&1&1&0&0\\
	1&1&1&1&0&0&0&0\\
	0&0&0&0&0&0&0&0
	\end{bmatrix}$.\\[2mm] By Theorem \ref{Thm-permuterow010}, we have $G^3(A,B)$ is word-representable. For 
	
	$M^4(A,B) = \begin{bmatrix}
	0&0&1&0&0&0&1&0&1&0&1&0&0&0&1&0\\
	0&1&0&0&0&1&0&0&0&0&0&0&0&1&0&0\\
	1&0&1&0&1&0&1&0&1&0&0&0&1&0&1&0\\
	0&0&0&0&0&0&0&0&0&0&0&1&0&0&0&0\\
	0&0&1&0&1&0&1&0&0&0&1&0&0&0&1&0\\
	0&1&0&0&0&0&0&0&0&1&0&0&0&1&0&0\\
	1&0&1&0&1&0&0&0&1&0&1&0&1&0&1&0\\
	0&0&0&0&0&0&0&1&0&0&0&0&0&0&0&0\\
	1&0&1&0&0&0&1&0&1&0&1&0&0&0&1&0\\
	0&0&0&0&0&1&0&0&0&0&0&0&0&1&0&0\\
	1&0&0&0&1&0&1&0&1&0&0&0&1&0&1&0\\
	0&0&0&1&0&0&0&0&0&0&0&1&0&0&0&0\\
	0&0&1&0&0&0&1&0&0&0&1&0&0&0&1&0\\
	0&1&0&0&0&1&0&0&0&1&0&0&0&1&0&0\\
	1&0&1&0&1&0&1&0&1&0&1&0&1&0&1&0\\
	0&0&0&0&0&0&0&0&0&0&0&0&0&0&0&0
	\end{bmatrix},$ \\
	
\noindent	
suppose that a reordering of columns $\rho = \rho_1\rho_2 \cdots \rho_{16}$ exists showing word-representability of $G^4(A,B)$ by Theorem \ref{Thm-permuterow010}. Then,
	\begin{itemize}
		\item  from row 3, columns 1, 3, 5, 7, 9, 13 and 15 must be (cyclically) consecutive;
		\item  from row 5, columns 3, 5, 7, 11 and 15 must be (cyclically) consecutive;
		\item  from row 7, columns 1, 3, 5, 9, 11, 13 and 15 must be (cyclically) consecutive.
	\end{itemize}
But then, from the first and the third bullet points, columns 1, 3, 5, 9, 13 and 15 must be consecutive and then column 7 or 11 is next to the left of them and the other one is next to the right of them. This contradicts to the second bullet point. So there is no such $\rho$ and $G^4(A,B)$ is non-word-representable. Therefore, $\IWR$$(A,B) = 4$.
\end{proof}

\begin{remark}\label{REMindex=4}
Proposition~\ref{caseindex=4} gives Case 6 in Table~\ref{tab:2x2-1}. In each of Cases 53, 63, 83, 87, 93, 98 and 105 in Tables~\ref{tab:2x2-1} and~\ref{tab:2x2-2},  $M^3(A,B)$ 
has a permutations satisfying condition in Theorem~\ref{Thm-ABcolpermute} but $M^4(A,B)$ does not (similarly to Proposition~\ref{caseindex=4}). So $\IWR$$(A,B)=4$ for $A$ and $B$ in these cases. Moreover, by Theorem~\ref{Thm-ABcolpermute}, column and row permutations of $A$ and $B$ give the same $\IWR$. Consequently, we also have $\IWR$$(A,B)=3$ for $A$ and $B$ in Cases 21, 35, 54, 64, 85, 92 and 95.
	\end{remark}

By Remarks~\ref{REMindex=2}, \ref{REMindex=3} and \ref{REMindex=4}, we can see that in many cases the index of word-representability is 2, 3 or 4. Next, we will introduce certain definitions and theorems to present the cases where the index of word-representability is infinity.

Let $M$ be an $m \times n$ binary matrix. For convenience, we will represent rows of $M$ by binary strings of length $n$. For example, we will represent three rows of ${\tiny \begin{bmatrix} 1&1&0&1\\0&1&0&0\\0&0&0&1 \end{bmatrix}}$ by 1101, 0100 and 0001.

\begin{definition}
	Let $A$ and $B$ be $m \times n$ binary matrices. Define $R^k(A,B)$ to be the  set of binary strings representing rows of $M^k(A,B)$. So every element of $R^k(A,B)$ is a binary string of length $n^k$. Each element of $R^k(A,B)$ is called a {\em row pattern of $M^k(A,B)$}.
\end{definition}

\begin{definition} Let $A=\begin{bmatrix}a&b\\c&d\end{bmatrix}$ and $B=\begin{bmatrix}e&f\\g&h\end{bmatrix}$ be $2 \times 2$ binary matrices and $B^n$ be the set of binary strings of length $n$. We define functions $u_{A,B}:\{0,1\} \rightarrow B^2$ and $l_{A,B}:\{0,1\} \rightarrow B^2$ by $$u_{A,B}(0)=ab ,l_{A,B}(0)=cd, u_{A,B}(1)= ef  \text{~~and~~} l_{A,B}(1)=gh.$$
Moreover, if $v=v_1v_2 \cdots v_k \in B^k$, $k\geq 2$, we extend the definition of the functions $u_{A,B}$  and $l_{A,B}$ to the case of $B^k \rightarrow B^{2k}$  by $$u_{A,B}(v)=u_{A,B}(v_1)u_{A,B}(v_2) \cdots u_{A,B}(v_k) $$ and $$l_{A,B}(v)=l_{A,B}(v_1)l_{A,B}(v_2) \cdots l_{A,B}(v_k). $$
When $A$ and $B$ are clear from the context, we can omit the subscript and write $u$ and $l$ instead of $u_{A,B}$ and $l_{A,B}$, respectively.
\end{definition}

\begin{example} \label{Ex-ul}
	{\em
	Let $A= \begin{bmatrix}1&1\\0&0\end{bmatrix}$ and $B=\begin{bmatrix} 1&0\\1&0	\end{bmatrix}$.  Then we have $$M^3(A,B)= \begin{bmatrix}
	1&0&1&1&1&0&1&1\\
	1&0&0&0&1&0&0&0\\
	1&0&1&1&1&0&1&1\\
	1&0&0&0&1&0&0&0\\
	1&0&1&0&1&0&1&0\\
	1&0&1&0&1&0&1&0\\
	1&1&1&1&1&1&1&1\\
	0&0&0&0&0&0&0&0
	\end{bmatrix}$$
	and $R^4(A,B)= \{ 10111011,10001000,10101010,11111111,00000000 \}$. In fact, we can find $R^4(A,B)$ by using the functions $u_{(A,B)}$ and $l_{A,B}$. As we start with $M^0(A,B)=\begin{bmatrix}0\end{bmatrix}$, we have $R^0(A,B)=\{0\}$. Then we apply the functions $u_{A,B}$ and $l_{A,B}$ to all elements in $R^0(A,B)$ to get all elements in $R^1(A,B)$:
	\begin{align*}
	u_{A,B}(0) = 11 \text{~~and~~} l_{A,B} (0) = 00.
	\end{align*}
	So, $R^1(A,B)=\{11,00\}$. Now,
	\begin{align*}
	u_{A,B}(11) = 1010 &\text{~~and~~} l_{A,B}(11) = 1010 \\
	u_{A,B}(00) = 1111 &\text{~~and~~} l_{A,B}(00) = 0000
	\end{align*}
	so $R^2(A,B)=\{1010,1111,0000\}$. Repeating the procedure one more time yields 
	\begin{align*}
	u_{A,B}(1010) = 10111011 &\text{~~and~~} l_{A,B}(1010) = 10001000 \\
	u_{A,B}(1111) = 10101010 &\text{~~and~~} l_{A,B}(1111) = 10101010 \\
	u_{A,B}(0000) = 11111111 &\text{~~and~~} l_{A,B}(0000) = 00000000,
	\end{align*}
	and so $R^3(A,B)= \{10111011,10001000,10101010,11111111,00000000\}$ which is the same as $R^4(A,B)$.
	}
\end{example} 

So, all elements in $R^k(A,B)$ are obtained by applying $u_{A,B}$ and $l_{A,B}$ to every element in $R^{k-1}(A,B)$. The next theorem generalizes this observation, and it can be proved easily by induction.

\begin{theorem}
	Let $A$ and $B$ be $2 \times 2$ binary matrices. Then $$R^k(A,B)= \{ f_k(\cdots f_2(f_1(0)) \cdots )| f_i \in \{ u_{A,B}, l_{A,B}\} \} \text{~for~any~} k \geq 1.$$ 
\end{theorem}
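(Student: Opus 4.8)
The plan is to prove the statement by induction on $k$, mirroring the concrete computation in Example~\ref{Ex-ul}. The base case $k=1$ is immediate: by definition $M^1(A,B)=A$, whose two rows are $ab$ and $cd$, and these are exactly $u_{A,B}(0)$ and $l_{A,B}(0)$. Hence $R^1(A,B)=\{u_{A,B}(0),l_{A,B}(0)\}=\{f_1(0)\mid f_1\in\{u_{A,B},l_{A,B}\}\}$, which matches the claimed set for $k=1$.

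For the inductive step, I would fix $k\geq 1$, assume the formula holds for $k$, and analyse how $M^{k+1}(A,B)$ is built from $M^k(A,B)$. The $2$-dimensional morphism replaces each entry of $M^k(A,B)$ by $A$ (if the entry is $0$) or $B$ (if the entry is $1$). The crucial observation is that each single row of $M^k(A,B)$, say the binary string $v=v_1v_2\cdots v_{2^k}$, gives rise to exactly two rows of $M^{k+1}(A,B)$: the top rows of the substituted $2\times 2$ blocks, concatenated, and the bottom rows, concatenated. By the definitions of $u_{A,B}$ and $l_{A,B}$, the top-row concatenation is precisely $u_{A,B}(v_1)u_{A,B}(v_2)\cdots u_{A,B}(v_{2^k})=u_{A,B}(v)$, and the bottom-row concatenation is $l_{A,B}(v)$. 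Therefore the set of rows of $M^{k+1}(A,B)$ is exactly $\{u_{A,B}(v)\mid v\in R^k(A,B)\}\cup\{l_{A,B}(v)\mid v\in R^k(A,B)\}$, i.e.\ the image of $R^k(A,B)$ under the two maps $u_{A,B}$ and $l_{A,B}$.

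Combining this with the inductive hypothesis $R^k(A,B)=\{f_k(\cdots f_1(0)\cdots)\mid f_i\in\{u_{A,B},l_{A,B}\}\}$ completes the step: applying $u_{A,B}$ and $l_{A,B}$ to every such word yields exactly the words $f_{k+1}(f_k(\cdots f_1(0)\cdots))$ ranging over all choices $f_1,\ldots,f_{k+1}\in\{u_{A,B},l_{A,B}\}$, which is the claimed description of $R^{k+1}(A,B)$.

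I expect the only genuinely delicate point to be stating the row-to-two-rows correspondence cleanly, since it requires being careful that $R^k(A,B)$ is treated as a \emph{set} of row patterns rather than a list of rows with multiplicity; distinct rows of $M^k(A,B)$ may coincide as strings, but this causes no difficulty because both sides of the claimed identity are sets, so duplicate row patterns collapse consistently on both sides. Everything else is a routine unwinding of the definition of $u_{A,B}$ and $l_{A,B}$ on concatenated words, which is exactly why the theorem is asserted to follow easily by induction.
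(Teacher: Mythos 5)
Your proof is correct and is precisely the argument the paper has in mind: the paper offers no written proof, merely observing beforehand that every element of $R^k(A,B)$ arises by applying $u_{A,B}$ and $l_{A,B}$ to elements of $R^{k-1}(A,B)$ and stating that the theorem ``can be proved easily by induction.'' Your write-up carries out exactly that induction, with the top-row/bottom-row concatenation observation supplying the inductive step, so there is nothing to add.
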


\begin{definition}
		Let $v=v_1v_2 \cdots v_k\in B^k$. Then, $\Gamma(v) := \{m\in \{1,2, \ldots, k \} | v_m =1 \}$.
\end{definition}

In order to study row patterns, we introduce a relation $\leq$ on $B^n$. Let $x=x_1x_2\cdots x_k$ and $y=y_1y_2\cdots y_k$ be in $B^k$. We say that $x \leq y$ if and only if $x_i=1$ implies $y_i = 1$ for every $i\in \{1,2,\ldots,k\}$. In other words,  $x \leq y$ if and only if $\Gamma(x) \subseteq \Gamma(y)$. It is easy see that $\leq$ is reflexive, antisymmetric and transitive, and thus $\leq$ is a partial order. 

\begin{theorem} \label{ThmTO->WR}
	Let $A$ and $B$ be $m \times n$ binary matrices. For any $k>1$, if $(R^k(A,B),\leq)$ is a total order, then $G^k(A,B)$ is word-representable.
\end{theorem}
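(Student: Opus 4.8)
```latex
\textbf{Proof proposal.} The plan is to exploit the characterization of word-representability from Theorem~\ref{Thm-genpermutecolumn}, which says that $G^k(A,B)$ is word-representable precisely when the columns of $M^k(A,B)$ can be permuted so that every row becomes of the form $0^r1^s0^t$ or $1^r0^s1^t$ (subject to condition (ii)). The hypothesis is that $(R^k(A,B),\leq)$ is a total order, where $\leq$ is the containment order on the sets $\Gamma(\cdot)$ of positions of $1$'s. My strategy is to produce a single explicit column permutation that simultaneously turns \emph{every} row pattern into a block of consecutive $1$'s (i.e.\ the form $0^r1^s0^t$), so that condition (ii) becomes vacuous and we are done by the ``$\Leftarrow$'' direction of Theorem~\ref{Thm-genpermutecolumn} (indeed, this is exactly the situation of Theorem~\ref{Thm-permuterow010}).

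First I would list the distinct row patterns of $M^k(A,B)$ as $w_1, w_2, \ldots, w_p$ and note that by the total-order assumption they can be indexed so that $w_1 \leq w_2 \leq \cdots \leq w_p$, which by definition of $\leq$ means the supports are nested: $\Gamma(w_1) \subseteq \Gamma(w_2) \subseteq \cdots \subseteq \Gamma(w_p)$. The key observation is that a chain of nested sets admits a ``staircase'' column ordering: I would choose a permutation $\rho$ of the columns that places first the columns in $\Gamma(w_1)$, then the columns in $\Gamma(w_2)\setminus\Gamma(w_1)$, then $\Gamma(w_3)\setminus\Gamma(w_2)$, and so on, and finally the columns belonging to no support (those never equal to $1$ in any row). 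Under this reordering, every set $\Gamma(w_i)$ becomes an \emph{initial} (hence consecutive) block of columns, so each row $w_i$ takes the form $1^{s_i}0^{t_i}$, a special case of $0^r1^s0^t$ with $r=0$.

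With every row of the reordered matrix $M^*$ in the form $0^r1^s0^t$, condition (i) of Theorem~\ref{Thm-genpermutecolumn} holds and no row is of the form $1^a0^b1^c$ with $a,b,c$ positive, so condition (ii) is vacuously satisfied. Therefore $G^k(A,B)$ is word-representable. (Equivalently, one invokes Theorem~\ref{Thm-permuterow010} directly, since each row is now $0^r1^s0^t$.) I should also dispose of the bookkeeping subtlety that if $M^k(A,B)$ happens to contain an all-$1$'s row, Theorem~\ref{Thm-genpermutecolumn} is not directly applicable; however the nesting construction still yields a valid semi-transitive orientation in which all independent-set vertices are of type B, so the conclusion via Theorems~\ref{key-thm} and~\ref{main-orientation} goes through regardless.

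The main obstacle I anticipate is purely one of presentation rather than mathematics: making precise that ``place the supports in nested order'' genuinely yields consecutive blocks, and handling columns that lie outside every support. Once the nesting $\Gamma(w_1)\subseteq\cdots\subseteq\Gamma(w_p)$ is extracted from the total-order hypothesis, the staircase permutation is forced and the verification that each row is $0^r1^s0^t$ is essentially immediate. The heart of the argument is the single implication that a total order under $\leq$ is the same as a chain of nested supports, which is exactly what lets one ordering of columns work for all rows at once.
```
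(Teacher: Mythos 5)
Your proposal is correct and follows essentially the same route as the paper's own proof: the paper likewise extracts the nested chain $\Gamma(x_1)\subseteq\cdots\subseteq\Gamma(x_l)$ from the total order, builds exactly your staircase permutation via the difference sets $D_1=\Gamma(x_1)$, $D_i=\Gamma(x_i)\setminus\Gamma(x_{i-1})$, and the leftover columns, verifies every reordered row has the form $1^s0^t$, and concludes by Theorem~\ref{Thm-permuterow010}. Your extra remarks (framing it through Theorem~\ref{Thm-genpermutecolumn} with condition (ii) vacuous, and the all-$1$'s-row caveat) are sound but unnecessary, since Theorem~\ref{Thm-permuterow010} is a sufficient condition with no restriction on all-$1$'s rows.
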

\begin{proof}
	Let $R^k(A,B)= \{x_1,x_2,\ldots, x_l\}$ where $l\geq 1$ and  $x_1,x_2,\ldots, x_l$ are binary strings of length $n^k$. Since $(R^k(A,B),\leq)$ is a total order, w.l.o.g., we assume that $x_1 \leq x_2 \leq \cdots \leq x_l$. That is $\Gamma(x_1) \subseteq \Gamma(x_2) \subseteq \cdots \subseteq \Gamma(x_l)$. Let
	\begin{align*}
	D_1 &:= \Gamma(x_1),\\
	D_2 &:= \Gamma(x_2)\setminus \Gamma(x_1),\\
	D_3 &:= \Gamma(x_3) \setminus \Gamma(x_2),\\
		& \text{~~}\vdots \\
	D_l &:= \Gamma(x_l) \setminus \Gamma(x_{l-1}) \text{~~~~and}\\
	D_{l+1} &:= \{1,2, \ldots ,n^k\} \setminus \Gamma(x_l).
	\end{align*}
If $D_j=\{i_{j,1},i_{j,2},\ldots,i_{j,|D_j|}\}$ for $i_{j,1}<i_{j,2}<\cdots <i_{j,|D_j|}$, then $$\rho = i_{1,1}i_{1,2}\cdots i_{1,|D_1|}i_{2,1}i_{2,2}\cdots i_{2,|D_2|}\cdots i_{l,1}i_{l,2}\cdots i_{l,|D_l|}$$ is a $n^k$-permutation. Let $M^*$ be the matrix obtained by reordering columns of $M^k(A,B)$ according to the order given by $\rho$. Then we want to prove that every row of $M^*(A,B)$ is of the form $1^s0^t$ for some $s,t \geq 0$. Let $y=y_1y_2\cdots y_{n^k}$ be a row pattern of $M^k(A,B)$ and $y^*=y_1^*y_2^* \cdots y_{n^k}^*$ be the row pattern after reordering $y$. Since $y \in R^k(A,B)$, then $y=x_q$ for some $q \in \{1,2, \ldots, l\}$. So $y_i=1$ for all $i \in \Gamma(x_q)$ and $y_i=0$ for all $i \notin \Gamma(x_q)$. That is, $\Gamma(y)$ is $$\{ i_{1,1},i_{1,2},\ldots,i_{1,|D_1|},i_{2,1},i_{2,2},\ldots,i_{2,|D_2|},\ldots, i_{q,1},i_{q,2},\ldots,i_{l,|D_q|} \}.$$ 
	Hence $y^*=1^s0^t$ where $s=|D_1|+|D_2|+\cdots+|D_q|$ and $t= n^k-|D_1|-|D_2|-\cdots-|D_q| $. Therefore every row of $M^*(A,B)$ is of the form $1^s0^t$. By Theorem~\ref{Thm-permuterow010}, we have that $G^k(A,B)$ is word-representable.
\end{proof}

 \begin{theorem} \label{ThmXYcompare->RW}
	Let $A$ and $B$ be  $2 \times 2$ binary matrices. If $x \leq y$ implies $\{u_{A,B}(x), l_{A,B}(x), u_{A,B}(y),l_{A,B}(y)\}$ is comparable under $\leq$ for binary strings  $x$ and $y$ of the same length, then $G^k(A,B)$ is word-representable for any $k \geq 0$.
\end{theorem}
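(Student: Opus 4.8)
The plan is to reduce to Theorem~\ref{ThmTO->WR}: I would show that the stated hypothesis forces $(R^k(A,B),\leq)$ to be a total order for every $k$, and then dispose of the small cases $k=0,1$ directly.

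First I would settle $k=0$ and $k=1$. The graph $G^0(A,B)$ has a single vertex, and since $M^1(A,B)=A$ is a $2\times 2$ matrix, $G^1(A,B)$ is a split graph on at most $4$ vertices; as there are no non-word-representable graphs on fewer than $6$ vertices, both are word-representable. For $k\geq 2$ it then suffices, by Theorem~\ref{ThmTO->WR}, to prove that $(R^k(A,B),\leq)$ is a total order.

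The heart of the argument is an induction on $k$ showing that $(R^k(A,B),\leq)$ is totally ordered. The base case $R^0(A,B)=\{0\}$ is immediate. For the inductive step I would use the recursive description $R^k(A,B)=\{u_{A,B}(x):x\in R^{k-1}(A,B)\}\cup\{l_{A,B}(x):x\in R^{k-1}(A,B)\}$. Given $p,q\in R^k(A,B)$, pick $x,y\in R^{k-1}(A,B)$ with $p\in\{u_{A,B}(x),l_{A,B}(x)\}$ and $q\in\{u_{A,B}(y),l_{A,B}(y)\}$. All elements of $R^{k-1}(A,B)$ have the same length $2^{k-1}$, so by the induction hypothesis $x$ and $y$ are comparable, say $x\leq y$ (the case $x=y$ is allowed since $\leq$ is reflexive). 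The hypothesis of the theorem then gives that $\{u_{A,B}(x),l_{A,B}(x),u_{A,B}(y),l_{A,B}(y)\}$ is comparable under $\leq$, and since $p$ and $q$ lie in this set they are comparable. Hence $(R^k(A,B),\leq)$ is a total order, which completes the induction.

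There is no substantial obstacle beyond bookkeeping: the decisive point is that the induction hypothesis supplies precisely the relation $x\leq y$ needed to invoke the theorem's assumption, and that assumption then propagates comparability one level up. The only things worth checking carefully are that every element of $R^k(A,B)$ really does arise as $u_{A,B}$ or $l_{A,B}$ applied to an element of $R^{k-1}(A,B)$ (so that suitable $x,y$ exist), and that the reflexive case $x=y$ is covered. With the total order established for all $k\geq 2$, Theorem~\ref{ThmTO->WR} delivers word-representability of $G^k(A,B)$, finishing the proof.
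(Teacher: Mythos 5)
Your proposal is correct and follows essentially the same route as the paper: an induction on $k$ showing $(R^k(A,B),\leq)$ is a chain, using the recursive description of $R^k(A,B)$ via $u_{A,B}$ and $l_{A,B}$ and invoking the hypothesis with the comparable pair $x\leq y$ supplied by the induction hypothesis, followed by an appeal to Theorem~\ref{ThmTO->WR}. Your treatment is in fact slightly more careful than the paper's, since you separately dispose of $k=0,1$ (where Theorem~\ref{ThmTO->WR} as stated does not apply) and note explicitly that the reflexive case $x=y$ is covered.
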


\begin{proof}
	We will prove by induction that $R^k(A,B)$ is comparable under $\leq$. Because $0 \leq 0$, we have $\{ u_{A,B}(0),l_{A,B}(0) \}= R^1(A,B)$ is comparable under $\leq$. Suppose that $R^l(A,B)$ is comparable under $\leq$ for some $l \geq 1$. Let $v,w \in R^{l+1}(A,B)$. Then 
	\begin{align*}
	v=u_{A,B}(x) \text{~~or~~} v=l_{A,B}(x) \text{~~for~some~~} x \in R^l(A,B)
	\end{align*}
	and 
	\begin{align*}
	w=u_{A,B}(y) \text{~~or~~} w=l_{A,B}(y) \text{~~for~some~~} y \in R^l(A,B).
	\end{align*}
	By induction hypothesis, w.l.o.g., we can assume that $x \leq y$. So, $\{u_{A,B}(x), l_{A,B}(x), u_{A,B}(y),l_{A,B}(y)\}$ is comparable and $v$ and $w$ belong to this set. Thus we have that $v$ and $w$ are comparable. Hence $R^k(A,B)$ is comparable under $\leq$ for any $k \geq 0$. Hence, by Theorem~\ref{ThmTO->WR}, $G^k(A,B)$ is word-representable for any $k \geq 0$.
\end{proof}

\begin{theorem} \label{Thm100101->WR}
	Let $A$ and $B$ be  $2 \times 2$ binary matrices. If $\{ u_{A,B}(100),$ $ l_{A,B}(100), u_{A,B}(101),l_{A,B}(101)\}$ is comparable under $\leq$, then $G^k(A,B)$ is word-representable for any $k \geq 0$.
\end{theorem}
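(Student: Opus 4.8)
The plan is to reduce the hypothesis to that of Theorem~\ref{ThmXYcompare->RW}, which then immediately yields word-representability of $G^k(A,B)$ for all $k \geq 0$. Thus it suffices to prove that for \emph{every} pair of equal-length binary strings $x \leq y$, the set $\{u_{A,B}(x), l_{A,B}(x), u_{A,B}(y), l_{A,B}(y)\}$ is comparable under $\leq$.

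First I would record the block structure of the four images. Write $p = u_{A,B}(0)$, $q = l_{A,B}(0)$, $r = u_{A,B}(1)$, $s = l_{A,B}(1)$, so that $p,q$ are the rows of $A$ and $r,s$ are the rows of $B$. Since $u_{A,B}$ and $l_{A,B}$ act position-by-position, for strings $x = x_1\cdots x_k$ and $y = y_1 \cdots y_k$ the four images decompose into $k$ consecutive length-$2$ blocks, and the $j$-th block of the four-row array with rows $u_{A,B}(x), l_{A,B}(x), u_{A,B}(y), l_{A,B}(y)$ depends only on the pair $(x_j, y_j)$. Because $x \leq y$ excludes $(x_j,y_j) = (1,0)$, only three column types occur: $(p,q,p,q)$ when $(x_j,y_j)=(0,0)$, $(r,s,r,s)$ when $(x_j,y_j)=(1,1)$, and $(p,q,r,s)$ when $(x_j,y_j)=(0,1)$. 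The central point is that two block-concatenations are $\leq$-comparable exactly when one is blockwise $\leq$ the other at \emph{every} block simultaneously; hence comparability of the whole family is equivalent to the existence of a single permutation $\sigma$ of the four rows that orders them into a chain at every block at once.

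The key observation is that the specific pair $x = 100$, $y = 101$ realizes each of the three column types exactly once: position $1$ gives $(x_1,y_1)=(1,1)$, position $2$ gives $(0,0)$, and position $3$ gives $(0,1)$. Consequently the hypothesis that $\{u_{A,B}(100), l_{A,B}(100), u_{A,B}(101), l_{A,B}(101)\}$ is comparable furnishes a permutation $\sigma$ that chains the four rows simultaneously across all three column types.

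Finally I would transport $\sigma$ to an arbitrary pair $x \leq y$. Every block of the array for $x,y$ is one of the three column types, and at each such block the column is identical to the corresponding column in the case $(100,101)$; therefore $\sigma$ orders the four rows correctly blockwise at every position, and concatenating gives that $\sigma$ chains $u_{A,B}(x), l_{A,B}(x), u_{A,B}(y), l_{A,B}(y)$, so they are comparable. Theorem~\ref{ThmXYcompare->RW} then applies. The one point needing care is the blockwise characterization of $\leq$-comparability of concatenations together with the verification that the single pair $(100,101)$ genuinely captures all three column types, since it is exactly this that lets one reuse the same $\sigma$ for every arrangement and multiplicity of column types.
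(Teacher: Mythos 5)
Your proposal is correct and reduces to Theorem~\ref{ThmXYcompare->RW} exactly as the paper does, but the transport step is executed quite differently. The paper proceeds by enumeration: it splits on whether $u_{A,B}(100) \leq l_{A,B}(100)$ or $l_{A,B}(100) \leq u_{A,B}(100)$, deduces that only eight total orders of the four strings can occur, and then, using the partition of positions into the sets $R$, $S$, $T$ where $(x_i,y_i)$ equals $(0,0)$, $(1,1)$, $(0,1)$, verifies in each of the eight cases that $u(x), l(x), u(y), l(y)$ form an explicitly displayed chain. Your argument replaces this case analysis with a single uniform mechanism: since $\leq$ on concatenations holds exactly when it holds blockwise at every block, any chaining permutation $\sigma$ of the quadruple for $(100,101)$ restricts to a chain of the block quadruples at each of the three positions, and because $100 \leq 101$ realizes each of the three admissible column types $(1,1)$, $(0,0)$, $(0,1)$ exactly once (the type $(1,0)$ being excluded by $x \leq y$), the same $\sigma$ chains the general quadruple blockwise at every position and hence globally. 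This is a genuine improvement in economy: the paper's eight cases are precisely the possible values of $\sigma$ compatible with its preliminary deductions, and your argument handles all of them, including degenerate ties among the four strings, in one stroke, while also making transparent \emph{why} the single test pair $(100,101)$ suffices. What the paper's version buys instead is concreteness: each of its eight cases exhibits the resulting chain of $u(x),l(x),u(y),l(y)$ explicitly, so a reader can verify it line by line without the blockwise formalism. The one step your writeup leans on, the equivalence between comparability of concatenations and the existence of a single permutation chaining all blocks simultaneously, is stated correctly and follows immediately from the positionwise definition of $\leq$, so there is no gap.
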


\begin{proof}
	For convenience, we write $u$ and $l$ instead of $u_{A,B}$ and $l_{A,B}$, respectively. Suppose that $\{u(100),l(100),u(101),l(101)\}$ is comparable under $\leq$. Let $x=x_1x_2\cdots x_m$ and $y=y_1y_2 \cdots y_m$ be binary strings of length $m$ such that $x \leq y$. Also, let 
	\begin{align*}
	R &:= \{i | x_i=0 ,y_i=0\},\\
	S &:= \{i | x_i=1 ,y_i=1\} \text{~and}\\
	T &:= \{i | x_i=0 ,y_i=1\}.
	\end{align*}
	Note that $R,S,T$ partition the set $\{1,2,\ldots,m\}$. Since $u(100)$ and $l(100)$ are comparable, we have two cases to consider.
	
	If $u(100) \leq l(100)$, then $u(1) \leq l(1)$ and $u(0) \leq l(0)$. So we have $u(101) \leq l(101)$ and it is impossible that $l(101) \leq u(100)$ and $l(100) \leq u(101)$. So there are four possible cases here, which are
	\begin{align*}
	&u(100) \leq u(101) \leq l(100) \leq l(101),\\
	&u(100) \leq u(101) \leq l(101) \leq l(100),\\
	&u(101) \leq u(100) \leq l(101) \leq l(100) \text{~and}\\
	&u(101) \leq u(100) \leq l(100) \leq l(101).
	\end{align*}
	Similarly, in the case of $l(100) \leq u(100)$, we have $l(1) \leq u(1)$ and $l(0) \leq u(0)$. So $l(101) \leq u(101)$ and  $u(100) \nleq l(100)$ and $u(101) \nleq l(100)$. So we have four more cases, which are
	\begin{align*}
	&l(100) \leq u(100) \leq l(101) \leq u(101),\\
	&l(100) \leq l(101) \leq u(101) \leq u(100),\\
	&l(101) \leq l(100) \leq u(101) \leq u(100) \text{~and}\\
	&l(101) \leq l(100) \leq u(100) \leq u(101).
	\end{align*}
Next, we will consider comparability of the set $\{u(x), l(x), u(y),$ $ l(y)\}$ in each case.
	
\begin{itemize}
\item $u(100) \leq u(101) \leq l(100) \leq l(101)$. So we have  $u(0) \leq u(1) \leq l(0) \leq l(1)$. Note that $u(x_i) \leq u(y_i)$ and $l(x_i) \leq l(y_i)$ for any $i \in T$. Then $u(x) \leq u(y)$ and $l(x) \leq l(y)$. Since $u(y_i) \leq l(x_i)$ where $i$ belongs to R, S or T, so $u(y) \leq l(x)$. Hence, $u(x) \leq u(y) \leq l(x) \leq l(y)$.

\item $u(100) \leq u(101) \leq l(101) \leq l(100)$. So we have  $u(0) \leq u(1) \leq l(1) \leq l(0)$. Note that $u(x_i) \leq u(y_i)$ and $l(y_i) \leq l(x_i)$ for any $i \in T$. Then $u(x) \leq u(y)$ and $l(y) \leq l(x)$. Since $u(y_i) \leq l(y_i)$ where $i$ belongs to R, S or T, so $u(y) \leq l(y)$. Hence, $u(x) \leq u(y) \leq l(y) \leq l(x)$.
	
\item $u(101) \leq u(100) \leq l(101) \leq l(100)$. So we have  $u(1) \leq u(0) \leq l(1) \leq l(0)$. Note that $u(y_i) \leq u(x_i)$ and $l(y_i) \leq l(x_i)$ for any $i \in T$. Then $u(y) \leq u(x)$ and $l(y) \leq l(x)$. Since $u(x_i) \leq l(y_i)$ where $i$ belongs to R, S or T, so $u(x) \leq l(y)$. Hence, $u(y) \leq u(x) \leq l(y) \leq l(x)$.

\item $u(101) \leq u(100) \leq l(100) \leq l(101)$. So we have  $u(1) \leq u(0) \leq l(0) \leq l(1)$. Note that $u(y_i) \leq u(x_i)$ and $l(x_i) \leq l(y_i)$ for any $i \in T$. Then $u(y) \leq u(x)$ and $l(x) \leq l(y)$. Since $u(x_i) \leq l(x_i)$ where $i$ belongs to R, S or T, so $u(x) \leq l(x)$. Hence, $u(y) \leq u(x) \leq l(x) \leq l(y)$.	
	
\item $l(100) \leq l(101) \leq u(100) \leq u(101)$. So we have  $l(0) \leq l(1) \leq u(0) \leq u(1)$. Note that $l(x_i) \leq l(y_i)$ and $u(x_i) \leq u(y_i)$ for any $i \in T$. Then $l(x) \leq l(y)$ and $u(x) \leq u(y)$. Since $l(y_i) \leq u(x_i)$ where $i$ belongs to R, S or T, so $l(y) \leq u(x)$. Hence, $l(x) \leq l(y) \leq u(x) \leq u(y)$.
	
\item $l(100) \leq l(101) \leq u(101) \leq u(100)$. So we have  $l(0) \leq l(1) \leq u(1) \leq u(0)$. Note that $l(x_i) \leq l(y_i)$ and $u(y_i) \leq u(x_i)$ for any $i \in T$. Then $l(x) \leq l(y)$ and $u(y) \leq u(x)$. Since $l(y_i) \leq u(y_i)$ where $i$ belongs to R, S or T, so $l(y) \leq u(y)$. Hence, $l(x) \leq l(y) \leq u(y) \leq u(x)$.
	
\item $l(101) \leq l(100) \leq u(101) \leq u(100)$. So we have  $l(1) \leq l(0) \leq u(1) \leq u(0)$. Note that $l(y_i) \leq l(x_i)$ and $u(y_i) \leq u(x_i)$ for any $i \in T$. Then $l(y) \leq l(x)$ and $u(y) \leq u(x)$. Since $l(x_i) \leq u(y_i)$ where $i$ belongs to R, S or T, so $l(x) \leq u(y)$. Hence, $l(y) \leq l(x) \leq u(y) \leq u(x)$.
	
\item $l(101) \leq l(100) \leq u(100) \leq u(101)$. So we have  $l(1) \leq l(0) \leq u(0) \leq u(1)$. Note that $l(y_i) \leq l(x_i)$ and $u(x_i) \leq u(y_i)$ for any $i \in T$. Then $l(y) \leq l(x)$ and $u(x) \leq u(y)$. Since $l(x_i) \leq u(x_i)$ where $i$ belongs to R, S or T, so $l(x) \leq u(x)$. Hence, $l(y) \leq l(x) \leq u(x) \leq u(y)$.
\end{itemize}
	
	We can see that $\{u(x), l(x), u(y),$ $ l(y)\}$ is comparable in every case. By Theorem~\ref{ThmXYcompare->RW}, $G^k(A,B)$ is word-representable for any $k \geq 0$.
\end{proof}

\begin{remark} \label{Remindex=inf}
	Theorem~\ref{Thm100101->WR} can be applied to check word-representability of $G^k(A,B)$. We can see that $\{ u_{A,B}(100), l_{A,B}(100), u_{A,B}(101),l_{A,B}(101)\}$ is comparable under $\leq$ in Cases 2, 7, 8, 13, 51, 56, 61, 84, 94, 100, 103, 107 and 113 in Tables~\ref{tab:2x2-1} and~\ref{tab:2x2-2}. Then $\IWR$$(A,B)= \infty$ for $A$ and $B$ in these cases. By Theorem~\ref{Thm-ABcolpermute}, column and row permutations of $A$ and $B$ in these cases preserve the index of word-representability. So we also have $\IWR$$(A,B)=\infty$ for $A$ and $B$ in Cases 18, 23, 27, 30, 34, 43, 44, 48, 52, 59, 62, 86 and 96.
\end{remark}

\begin{proposition} \label{case9}
	Let $A=\begin{bmatrix} 1&0\\0&0 \end{bmatrix}$ and $B=\begin{bmatrix} 1&0\\0&1 \end{bmatrix}$. Then $G^k(A,B)$ is word-representable for any $k \geq 0$.
\end{proposition}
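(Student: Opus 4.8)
The plan is to argue directly with the row patterns and to finish via Theorem~\ref{Thm-permuterow010}, rather than through the comparability criterion of Theorem~\ref{Thm100101->WR}. First I would record the two generating functions for these matrices: here $u_{A,B}(0)=u_{A,B}(1)=10$, while $l_{A,B}(0)=00$ and $l_{A,B}(1)=01$. Two consequences are immediate. Since $u_{A,B}$ sends every letter to $10$, applying $u_{A,B}$ to any string of length $p$ yields $(10)^p$; hence all $u$-images at a fixed iteration level coincide and their support consists only of odd positions. On the other hand $l_{A,B}$ doubles positions, so $\Gamma(l_{A,B}(v))=\{2i : i\in\Gamma(v)\}$, a set of even positions. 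I would also note in passing that Theorem~\ref{Thm100101->WR} does \emph{not} apply here, because $\{u_{A,B}(100),l_{A,B}(100),u_{A,B}(101),l_{A,B}(101)\}=\{101010,010000,010001\}$ is not a chain under $\leq$ (the first is incomparable with the second), which is exactly why a separate argument is needed.

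The heart of the proof is the claim that for every $k\geq 1$ the \emph{distinct} nonempty row patterns of $M^k(A,B)$ have pairwise disjoint supports. I would prove this by induction on $k$ using $R^k(A,B)=\{u_{A,B}(v),l_{A,B}(v) : v\in R^{k-1}(A,B)\}$. The base case $R^1(A,B)=\{10,00\}$ is clear. For the inductive step, the unique common $u$-image has support inside the odd positions, whereas every $l$-image has support inside the even positions, so the $u$-pattern is disjoint from all $l$-patterns by parity. Moreover two distinct $l$-images $l_{A,B}(v)$ and $l_{A,B}(w)$ have supports $2\Gamma(v)$ and $2\Gamma(w)$, which are disjoint precisely because $\Gamma(v)$ and $\Gamma(w)$ are disjoint by the induction hypothesis. (One may identify these supports explicitly with the $2$-adic level sets $\{m\leq 2^k : v_2(m)=j\}$ for $j=0,\ldots,k-1$, which gives a concrete picture, but only the disjointness is actually needed.)

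Finally I would convert disjointness into word-representability. Writing the nonempty supports as pairwise disjoint sets $S_1,\ldots,S_r\subseteq\{1,\ldots,2^k\}$, I would take the column permutation $\rho$ that lists first the columns of $S_1$, then those of $S_2$, and so on, and finally all columns lying in no support. Under $\rho$ the set of $1$'s in each row becomes a single contiguous block, so every row has the form $0^r1^s0^t$; Theorem~\ref{Thm-permuterow010} then gives that $G^k(A,B)$ is word-representable, while the case $k=0$ is a single vertex and is trivial.

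I expect the main obstacle to be the inductive bookkeeping for the support claim: one must keep track of the fact that it is the \emph{distinct} patterns whose supports are disjoint (equal patterns are permitted), and verify that the parity split between $u$-images and $l$-images is exactly what stops the two families from overlapping. Once disjointness is established, both the choice of a contiguous-block column ordering and the appeal to Theorem~\ref{Thm-permuterow010} are routine.
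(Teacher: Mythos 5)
Your proof is correct and is essentially the paper's argument: the paper proves the explicit identification $R^k(A,B)=T^k$, whose level sets $S^k_j=\{s\in\{1,\ldots,2^k\} : s\equiv 2^{j-1}\pmod{2^j}\}$ are precisely the pairwise disjoint supports you isolate, and it finishes exactly as you do, listing the columns of each support block consecutively and invoking Theorem~\ref{Thm-permuterow010}. Your only deviation is to weaken the induction invariant from the explicit pattern description to mere disjointness of supports (via the parity split between the common $u$-image $(10)^{2^{k-1}}$ and the position-doubling $l$-images, using injectivity of $l$), which is sound and slightly leaner bookkeeping for the same induction.
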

\begin{proof}
	The case of $k=0$ is trivial. For $k \geq 1$, we let $S^k_0 := \emptyset$,
	$$ S^k_j :=\{s\in \{1,2,\ldots,2^k\} \vert s \equiv 2^{j-1} \pmod{2^j} \} \text{~for~} j \in \{1,2,\ldots , k\} $$ and 
	\begin{align*}
	T^k := \{ x_1x_2\cdots x_{2^k} \vert x_i = \begin{cases}
		0 \text{~if~} i \notin S^k_j,\\ 
		1 \text{~if~} i \in S^k_j 
	\end{cases} 
	\text{~for~some~} j \in \{0,1,\ldots, 2^k\}	\}. 
	\end{align*}	
	We claim that $R^k(A,B) = T^k$ for any $k \geq 1$ and prove it by induction on $k$. It is obvious in the case of $k=1$ because $R^1(A,B)=\{00,10\}$ while $S_0=\emptyset$ and $S_1 = \{1\}$. Suppose $l$ is a positive integer such that $R^l(A,B) = T^l$. Let $y \in R^{l+1}(A,B)$, then $y=u_{A,B}(z)$ or $y=l_{A,B}(z)$ for some $z \in R^l(A,B)$. Since $R^l(A,B)=T^l$, we have $$z=z_1z_2\cdots z_{2^l} \text{~where~} z_i= \begin{cases}
	0 \text{~if~} i \notin S^l_j,\\ 
	1 \text{~if~} i \in S^l_j. 
	\end{cases}$$
	If $y=u_{A,B}(z)$, then $y=1010\cdots10$. That is, 
	$y_i = \begin{cases}
	0 \text{~if~} i \notin S^{l+1}_1,\\ 
	1 \text{~if~} i \in S^{l+1}_1 
	\end{cases}$, and so $y \in T^{l+1}$. If $y=l_{A,B}(z)$, the number of $1$'s in $y$ and $z$ is identical because $0$ is mapped to $00$ and $1$ is mapped to $01$. We can see that $y_{2i}=1$ if and only if $x_i=1$. Hence, $$y_i = \begin{cases}
	0 \text{~if~} i \notin S^{l+1}_{r+1},\\ 
	1 \text{~if~} i \in S^{l+1}_{r+1}. 
	\end{cases}$$ So $R^{l+1}(A,B) \subseteq T^{l+1}$. Conversely, let $v=v_1v_2\cdots v_{2^{l+1}}$ where $$v_i = \begin{cases}
	0 \text{~if~} i \notin S^{l+1}_{t},\\ 
	1 \text{~if~} i \in S^{l+1}_{t} 
	\end{cases}$$ for some $t \in \{0,1,\ldots, l+1 \}$. If $t=0$, note that $v=000\cdots0 = l_{A,B}(000\cdots0)$ and $000\cdots0 \in R^l(A,B)$, and so $v \in R^{l+1}(A,B)$. If $t=1$, we have $v=1010 \cdots 10 = u_{A,B}(\bar{v})$ for any $\bar{v} \in R^l(A,B)$. That is $v \in R^{l+1}(A,B)$. Suppose $t \in \{2,3,\cdots l+1\}$ and $w=w_1w_2 \cdots w_{2^l}$ be an element of $T^l$ such that $$w_i = \begin{cases}
	0 \text{~if~} i \notin S^l_{t-1},\\ 
	1 \text{~if~} i \in S^l_{t-1}. 
	\end{cases}$$ By induction hypothesis, $w \in R^l(A,B)$. Note that $v= l_{A,B}(w)$ and then $v \in R^{l+1}(A,B)$. So we have $T^{l+1} \subseteq R^{l+1}(A,B)$. Hence we have already proved the claim.
	
	Note that $\{1,2,\ldots, 2^k-1 \}$ is a disjoint union of $S_1,S_2,\ldots, S_k$. If $S^k_j = \{i_{j,1},i_{j,2},\ldots , i_{j,|S^k_j|}\}$ for $i_{j,1} < i_{l,2} < \cdots < i_{j,|S^k_j|}$, then we can let $\rho$ be the permutation 
	$$i_{1,1}i_{1,2}\cdots i_{1,|S^k_1|}
	i_{2,1}i_{2,2}\cdots i_{2,|S^k_2|} \cdots
	i_{k,1}i_{k,2}\cdots i_{k,|S^k_k|}2^k.
	$$
	Since $R^l(A,B)=Tt^k$, then all $1$'s in each row pattern in $R^k(A,B)$ is in columns $i_{j,1},i_{j,2},\ldots , i_{1,|S^k_j|}$ for some $1<j<k$. Therefore we can see that every row of the matrix obtained by reordering columns of $M^k(A,B)$ according to the order given by $\rho$ is of the form $0^a1^b0^c$ for some non-negative integers $a,b$ and $c$. By Theorem~\ref{Thm-permuterow010}, $G^k(A,B)$ is word-representable for ant $k \geq 1$.
\end{proof}

	\begin{remark} 
	\label{case9-26-41}
	Proposition~\ref{case9} gives $\IWR$$(A,B)= \infty$ for $A$ and $B$ in Case 9 in Tables~\ref{tab:2x2-1}. By Theorem~\ref{Thm-ABcolpermute}, column and row permutations of $A$ and $B$ give the same $\IWR$. Consequently, we also have $\IWR$$(A,B)=\infty$ for $A$ and $B$ in Cases 26 and 41.
	\end{remark}

	\begin{proposition} \label{case74}
		Let $A=\begin{bmatrix} 1&0\\0&1 \end{bmatrix}$ and $B=\begin{bmatrix} 0&1\\1&0 \end{bmatrix}$. Then $G^k(A,B)$ is word-representable for any $k \geq 0$.
	\end{proposition}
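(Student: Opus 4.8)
The plan is to exploit a special feature of this particular pair: $B$ is obtained from $A$ by interchanging $0$'s and $1$'s, so the two substitution functions are complementary. Computing from the definitions, $u_{A,B}(0)=10$, $l_{A,B}(0)=01$, $u_{A,B}(1)=01$ and $l_{A,B}(1)=10$. Writing $\overline v$ for the bitwise complement of a binary string $v$ (so $\Gamma(\overline v)=\{1,\dots,|v|\}\setminus\Gamma(v)$), the two identities I would extract on single bits are $l_{A,B}(b)=\overline{u_{A,B}(b)}$ and $u_{A,B}(\overline b)=\overline{u_{A,B}(b)}$. Since both $u_{A,B}$ and $l_{A,B}$ act on strings letter by letter, these upgrade verbatim to $l_{A,B}(v)=\overline{u_{A,B}(v)}$ and $u_{A,B}(\overline v)=\overline{u_{A,B}(v)}$ for every binary string $v$.

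First I would prove by induction on $k$ that $R^k(A,B)$ consists of exactly two strings that are complements of one another, say $R^k(A,B)=\{P_k,\overline{P_k}\}$. The base case is $R^1(A,B)=\{u_{A,B}(0),l_{A,B}(0)\}=\{10,01\}$. For the inductive step, assuming $R^{k-1}(A,B)=\{P,\overline P\}$, we have $R^k(A,B)=\{u_{A,B}(P),l_{A,B}(P),u_{A,B}(\overline P),l_{A,B}(\overline P)\}$; the two identities give $l_{A,B}(P)=\overline{u_{A,B}(P)}$, $u_{A,B}(\overline P)=\overline{u_{A,B}(P)}$ and $l_{A,B}(\overline P)=\overline{u_{A,B}(\overline P)}=u_{A,B}(P)$, so the set collapses to $\{u_{A,B}(P),\overline{u_{A,B}(P)}\}$, a complementary pair, completing the induction.

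With this structure in hand, word-representability follows from Theorem~\ref{Thm-permuterow010}. The case $k=0$ is trivial, so take $k\geq 1$ and let $\rho$ be the column permutation listing first all columns in $\Gamma(P_k)$ and then all remaining columns. Under $\rho$, every row equal to $P_k$ becomes $1^a0^b$ and every row equal to $\overline{P_k}$ becomes $0^a1^b$, where $a=|\Gamma(P_k)|$ and $b=2^k-a$. Both $1^a0^b$ and $0^a1^b$ are of the form $0^r1^s0^t$ for suitable nonnegative $r,s,t$, so after reordering columns by $\rho$ every row of $M^k(A,B)$ has this shape, and Theorem~\ref{Thm-permuterow010} gives that $G^k(A,B)$ is word-representable.

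I expect the only delicate point to be the verification of the two complementation identities and the bookkeeping showing they force $R^k(A,B)$ to be a two-element complementary set; this is exactly where the argument uses the specific forms of $A$ and $B$, and once it is established the rest is immediate. It is worth flagging that the softer criteria of Theorems~\ref{ThmTO->WR} and~\ref{ThmXYcompare->RW} are \emph{not} available here: for $k\geq 1$ the patterns $P_k$ and $\overline{P_k}$ are disjoint complements, hence incomparable under $\leq$, so $(R^k(A,B),\leq)$ is never a total order. The leverage comes precisely from straightening a row and its complement simultaneously into the admissible shape $0^r1^s0^t$, which is the mildest instance of the general permutation criterion of Theorem~\ref{Thm-genpermutecolumn}.
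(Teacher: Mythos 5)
Your proposal is correct and follows essentially the same route as the paper's own proof: an induction showing $R^k(A,B)$ is a complementary pair $\{x,\overline{x}\}$ (the paper derives this from $u(0)=l(1)$, $u(1)=l(0)$ and $u(b)=\overline{l(b)}$, which are equivalent to your two complementation identities), followed by the same column permutation sending the two patterns to $1^s0^t$ and $0^s1^t$ and an appeal to Theorem~\ref{Thm-permuterow010}. No gaps; your closing observation that $(R^k(A,B),\leq)$ is never a total order here is accurate but ancillary.
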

	\begin{proof}
	The case of $k=0$ is trivial. For any binary string $x$, we define $\bar{x}$ to be the binary string obtained by changing digits of $x$ from 0 to 1 and from 1 to 0. We claim that, for $k \geq 1$, $R^k(A,B)= \{x, \bar{x}\}$ for some $x \in B^{2^k}$ and prove it by induction on $k$. As $R^1(A,B)= \{10,01\}$ and $01 = \overline{10}$, the case of $k=1$ is done. Suppose that $k>1$ is a positive integer such that $R^k(A,B)=\{ x, \bar{x} \}$ for some $x \in B^{2^k}$. Then $R^{k+1}(A,B)=\{ u_{A,B}(x),l_{A,B}(x),u_{A,B}(\bar{x}), l_{A,B}(\bar{x})\}$. Note that $u_{A,B}(0)=l_{A,B}(1)$ and $u_{A,B}(1)=l_{A,B}(0)$. Then $u_{A,B}(x)=l_{A,B}(\bar{x})$ and $l_{A,B}(x)=u_{A,B}(\bar{x})$. So we have $R^k(A,B)=\{ u_{A,B}(x) , l_{A,B}(x) \}$. Since $u_{A,B}(0)=\overline{l_{A,B}(0)}$ and $u_{A,B}(1)=\overline{l_{A,B}(1)}$, we have $u_{A,B}(x)= \overline{l_{A,B}(x)}$. That is, $R^k(A,B)=\{u_{A,B}(x), \overline{u_{A,B}(x)}\}$. Hence we have proved the claim by induction. 
	
So, for any $k\geq1$, $R^k(A,B)=\{x,\overline{x}\}$ for some binary string $x$. Suppose that there are $s$ $1$'s in $x$. Let $\rho$ be a permutation such that reordering $\begin{bmatrix}x\end{bmatrix}$ according to the order giving by $\rho$ make all $1$'s in $x$ be together in the first $s$ columns. Then reordering $\begin{bmatrix}\bar{x}\end{bmatrix}$ according to the order giving by $\rho$ makes all $1$'s in $x$ be together in the last $2^k-s$ columns. Hence each row of the matrix obtained by reordering $M^k(A,B)$ according to the order giving by $\rho$ is $1^s0^{2^k-s}$ or $0^s1^{2^k-s}$. By Theorem~\ref{Thm-permuterow010}, $G^k(A,B)$ is word-representable for any $k \geq 0$.
	\end{proof}

\subsection{The case when A is the all-one matrix}

There are only 16 cases when $A= \begin{bmatrix} 1&1\\1&1 \end{bmatrix}$. If $B= \begin{bmatrix}1&1\\1&1\end{bmatrix}$, we have $M^0(A,B)=\begin{bmatrix}0\end{bmatrix}$ and $M^k(A,B)$ is all-one matrix for any $k\geq1$ which is  word-representable by Theorem~\ref{Thm-permuterow010}. The rest of this paper, except Theorem~\ref{Thm-2chains}, deals with the cases when $B$ has at least one 0.

\begin{theorem} \label{Thm-2chains}
 	Let  $A=\begin{bmatrix} 1	\end{bmatrix}_{m \times n}$ and $B$ be an $m \times n$ binary matrix. Then, $M^k(A,B)$ is a submatrix of $M^{k+2}(A,B)$, and $G^k(A,B)$ is an induced subgraph of $G^{k+2}(A,B)$, for any $k \geq 0$.
\end{theorem}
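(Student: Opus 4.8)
The plan is to work directly with the two-step morphism. Writing $\varphi$ for the $2$-dimensional morphism sending $[0]\mapsto A$ and $[1]\mapsto B$, we have $M^{k+2}(A,B)=\varphi^{k+2}([0])=\varphi^2\big(\varphi^k([0])\big)=\varphi^2\big(M^k(A,B)\big)$. Thus $M^{k+2}(A,B)$ is obtained from $M^k(A,B)$ by replacing each entry $x$ with the $m^2\times n^2$ block $\varphi^2(x)$, where $\varphi^2(0)=\varphi(A)$ and $\varphi^2(1)=\varphi(B)$; in particular $M^{k+2}(A,B)$ carries a natural partition into $m^k\times n^k$ blocks, the block in position $(i,j)$ being $\varphi^2\big((M^k)_{ij}\big)$. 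My idea is to select the \emph{same} entry position $(a,b)$ inside every block: taking the rows $\{(i-1)m^2+a:1\le i\le m^k\}$ and the columns $\{(j-1)n^2+b:1\le j\le n^k\}$ produces an $m^k\times n^k$ submatrix whose $(i,j)$ entry is $\varphi^2\big((M^k)_{ij}\big)_{ab}$. Hence it suffices to find a single position $(a,b)$ with $\varphi^2(0)_{ab}=0$ and $\varphi^2(1)_{ab}=1$, for then this submatrix equals $M^k(A,B)$ entry by entry.

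The heart of the argument, and the reason the statement jumps by $2$ rather than $1$, is exactly the existence of such a position. After a single step this is hopeless: $\varphi(0)=A$ is the all-one matrix, so there is no position at which $\varphi(0)$ vanishes while $\varphi(1)=B$ is a $1$, which is precisely why $M^k$ need not sit inside $M^{k+1}$. Two steps repair this. Assuming $B$ has a $0$, say $B_{p_0q_0}=0$, I would set $a=(p_0-1)m+p_0$ and $b=(q_0-1)n+q_0$, so that both the block index and the within-block index of $(a,b)$ equal $(p_0,q_0)$. Since $\varphi^2(0)=\varphi(A)$ is the tiling of $B$'s obtained by substituting $B$ for each (necessarily $1$) entry of $A$, its $(a,b)$ entry is the within-block entry $B_{p_0q_0}=0$. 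Since $\varphi^2(1)=\varphi(B)$ substitutes $A$ for each $0$ of $B$ and $B_{p_0q_0}=0$, the block of $\varphi(B)$ in position $(p_0,q_0)$ is $A$, so its $(a,b)$ entry is the within-block entry $A_{p_0q_0}=1$. Thus $\varphi^2(0)_{ab}=0$ and $\varphi^2(1)_{ab}=1$, as required, and $M^k(A,B)$ is a submatrix of $M^{k+2}(A,B)$.

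It remains to pass from matrices to graphs and to dispose of the degenerate case. The implication that a submatrix of $M'$ yields an induced subgraph of $G(M')$ is precisely the mechanism already used at the end of the proof of Theorem~\ref{thm-submat}: restricting $S(M^{k+2}(A,B))$ to the clique vertices indexed by the chosen columns and the independent-set vertices indexed by the chosen rows yields exactly $S(M^k(A,B))$, because the clique is complete, the independent set is edgeless, and the clique--independent-set adjacencies are read off from the selected submatrix; hence $G^k(A,B)$ is an induced subgraph of $G^{k+2}(A,B)$. Finally, if $B$ is the all-one matrix then for $k\ge 1$ both $M^k(A,B)$ and $M^{k+2}(A,B)$ are all-one matrices and the containment is immediate, while $G^0(A,B)$ is a single vertex and the induced-subgraph claim is trivial; so the substantive content is entirely the case where $B$ contains a $0$, treated above. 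I expect the only genuinely delicate point to be the bookkeeping of the two levels of block indices, so that a single position $(a,b)$ simultaneously realizes a $0$ in $\varphi^2(0)$ and a $1$ in $\varphi^2(1)$.
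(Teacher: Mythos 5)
Your proof is correct, but it factors the containment the opposite way from the paper, so it is a genuinely different argument. The paper fixes a zero entry $b_{rs}$ of $B$, notes that $M^2(A,B)$ is a tiling of copies of $B$ and hence has a $0$ in position $(r,s)$, and then runs an induction on $k$ that amounts to the factorization $M^{k+2}(A,B)=\varphi^k\bigl(M^2(A,B)\bigr)$: the $\varphi^k$-image of that single $0$ entry of $M^2(A,B)$ is exactly $\varphi^k([0])=M^k(A,B)$, so the paper's copy of $M^k(A,B)$ is a \emph{contiguous} block of $M^{k+2}(A,B)$, occupying rows $(r-1)m^k+1,\ldots,rm^k$ and columns $(s-1)n^k+1,\ldots,sn^k$. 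You instead use the factorization $M^{k+2}(A,B)=\varphi^2\bigl(M^k(A,B)\bigr)$ and locate a single position $(a,b)$ inside the $m^2\times n^2$ two-step blocks at which $\varphi^2(0)$ reads $0$ and $\varphi^2(1)$ reads $1$; picking that position out of every block produces a \emph{spread-out} copy of $M^k(A,B)$ (one row out of every $m^2$ consecutive rows, one column out of every $n^2$ consecutive columns), and your computation verifying $\varphi^2(0)_{ab}=0$ and $\varphi^2(1)_{ab}=1$ with $a=(p_0-1)m+p_0$, $b=(q_0-1)n+q_0$ is correct. Both arguments hinge on $B$ having a zero entry and on the block-compatibility of the substitution, and both pass from matrices to graphs by the same restriction of $S(\cdot)$ used in Theorem~\ref{thm-submat}. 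What each buys: the paper's induction is shorter, and its contiguous copy is what makes the chains $G^0\preceq G^2\preceq G^4\preceq\cdots$ look like a nested limit; your version needs no induction on the location of the copy (only the standard composition property $\varphi^{k+2}=\varphi^2\circ\varphi^k$) and isolates cleanly why the step must be $2$ rather than $1$, since after one step $\varphi(0)=A$ is all-one and no position can simultaneously read $0$ from $\varphi(0)$ and $1$ from $\varphi(1)$. Your treatment of the all-one $B$ case (matrix containment for $k\ge 1$, the graph statement for $k=0$) is at least as careful as the paper's, which simply calls that case trivial.
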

\begin{proof}
Since the case of	$B$ being an all-one matrix is trivial, we assume that $B$ is not an all-one matrix. Let $B=\begin{Bmatrix} b_{ij} \end{Bmatrix}_{m \times n}$ and $b_{rs}=0$ for some $1 \leq r \leq m$ and $1 \leq s \leq n$.  We will prove by induction on $k$ that $M^k(A,B)$ is contained in $M^{k+2}(A,B)$ as a submatrix by rows $(r-1)m^k+1, (r-1)m^k+2,\ldots, rm^k$ and columns $(s-1)n^k+1, (s-1)n^k+2,\ldots, sn^k$ for any $k \geq 0$.

Note that $M^0(A,B)= \begin{bmatrix}0\end{bmatrix}$, $M^1(A,B)= \begin{bmatrix} 1 \end{bmatrix}_{m \times n}$ and $$M^2(A,B)= \begin{bmatrix}
	B&B&\cdots & B\\
	B&B&\cdots & B\\
	\vdots&\vdots&\ddots&\vdots \\
	B&B&\cdots & B\\
	\end{bmatrix}.$$ Let $M^2(A,B)=\begin{bmatrix} m_{ij} \end{bmatrix}$, then $m_{rs}=0$. So $M^0(A,B)$ is a  submatrix of $M^2(A,B)$ by row $(r-1)m^0+1$ and column $(s-1)n^0+1$.
	
	Let $l\geq 1$ be an integer such that $M^l(A,B)$ is a  submatrix of $M^{l+2}(A,B)$ by rows $(r-1)m^k+1, (r-1)m^k+2,\ldots, rm^k$ and columns $(s-1)n^k+1, (s-1)n^k+2,\ldots, sn^k$. For the next iteration of morphism applied to $M^{l+2}(A,B)$, it is easy to see that $M^l(A,B)$  in the rows $(r-1)m^k+1, (r-1)m^k+2,\ldots, rm^k$ and the columns $(s-1)n^k+1, (s-1)n^k+2,\ldots, sn^k$ $M^{l+2}(A,B)$ is mapped to $M^{l+1}(A,B)$ in the rows $(r-1)m^{k+1}+1, (r-1)m^{k+1}+2,\ldots, rm^{k+1}$ and columns $(s-1)n^{k+1}+1, (s-1)n^{k+1}+2,\ldots, sn^{k+1}$ of $M^{l+3}(A,B)$. Hence, $M^k(A,B)$ is a submatrix of $M^{k+2}(A,B)$ for any $k \geq 0$. Consequently, $G^k(A,B)$ is an induced subgraph of $G^{k+2}(A,B)$ for any $k \geq 0$.
\end{proof}

We know from Theorem~\ref{Thm-2chains} that $G^0(A,B) \preceq G^2(A,B) \preceq G^4(A,B) \preceq \cdots $ and $G^1(A,B) \preceq G^3(A,B) \preceq G^5(A,B) \preceq \cdots $ where $G \preceq H$ means $G$ is an induced subgraph of $H$. So we are interested in investigating the smallest integer $l$ such that $G^l(A,B)$ is non-word-representable in both cases of $l$ being  even and $l$ being odd. The cases 114, 119, 120, 123, 124 and 129 in Table~\ref{tab:2x2-2} are given by using Proposition~\ref{ABall0all1}. Theorem~\ref{Thm100101->WR} can be applied to Cases 125, 126, 127 and 128, and the index of word-representability in these cases is infinity. The following propositions discuss the remaining cases.

\begin{proposition}\label{Propcase121}
	For $A=\begin{bmatrix} 1&1\\1&1 \end{bmatrix}$ and $B=\begin{bmatrix} 1&0\\0&1 \end{bmatrix}$, $\IWR$$(A,B)=3$. Moreover, $G^{k}(A,B)$ is not word-representable for $k\geq 3$.
\end{proposition}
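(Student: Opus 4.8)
The plan is to nail down the two smallest iterations by direct computation and then exploit the self-similar block structure of the morphism to dispatch all $k\ge 3$ uniformly. First I would record that $M^1(A,B)=A=\begin{bmatrix}1&1\\1&1\end{bmatrix}$ and $M^2(A,B)=\varphi(A)=\begin{bmatrix}1&0&1&0\\0&1&0&1\\1&0&1&0\\0&1&0&1\end{bmatrix}$, and observe that reordering the columns of $M^2$ by $1324$ turns every row into $1100$ or $0011$, i.e.\ into the form $0^r1^s0^t$. Hence $G^2(A,B)$ is word-representable by Theorem~\ref{Thm-permuterow010}. Since $G^0(A,B)$ and $G^1(A,B)$ have at most four vertices and are therefore word-representable, this already gives $\IWR(A,B)\ge 3$, and it remains only to exhibit a single non-word-representable configuration inside $M^3$.

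The key object is the $4\times4$ matrix $N:=\varphi(B)=\begin{bmatrix}B&A\\A&B\end{bmatrix}=\begin{bmatrix}1&0&1&1\\0&1&1&1\\1&1&1&0\\1&1&0&1\end{bmatrix}$, which has no all-$1$'s row and whose rows $1,4,3$ form $\begin{bmatrix}1&0&1&1\\1&1&0&1\\1&1&1&0\end{bmatrix}$, exactly the third forbidden submatrix in Theorem~\ref{N-W-R-44}. Thus $G(N)$ is non-word-representable. I would then show that $N$ occurs as a submatrix of $M^k(A,B)$ for every $k\ge 3$; since any submatrix of a binary matrix $M$ induces the corresponding induced split subgraph of $G(M)$ (the principle used in the proof of Theorem~\ref{thm-submat}), this makes $G(N)$ an induced subgraph of $G^k(A,B)$, and non-word-representability is inherited. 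I would prefer this forbidden-submatrix route to a direct application of Theorem~\ref{Thm-genpermutecolumn}, because for $M^3$ one can in fact satisfy condition (i) of that theorem while condition (ii) fails, so the heredity argument is cleaner.

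To locate $N$ inside $M^k$, I would use that $A$ is the all-$1$'s matrix. An immediate induction (base $M^1_{11}=A_{11}=1$, step via $B_{11}=1$) gives $M^k_{11}=1$ for all $k\ge1$; consequently, for every $j\ge2$ the top-left $2\times2$ block of $M^{j}=\varphi(M^{j-1})$ is the image of the entry $M^{j-1}_{11}=1$, namely $B$. Applying $\varphi$ once more, for every $k\ge3$ the top-left $4\times4$ block of $M^k=\varphi(M^{k-1})$ is $\varphi(B)=N$. In particular $G^3(A,B)$ is non-word-representable, which combined with the first paragraph yields $\IWR(A,B)=3$, and the same top-left $N$-block certifies non-word-representability of $G^k(A,B)$ for all $k\ge3$. (Alternatively, one may verify only that $M^3$ and $M^4$ have top-left block $N$ and then invoke the two induced-subgraph chains of Theorem~\ref{Thm-2chains} to cover all $k\ge3$.)

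The routine parts are the two explicit matrix computations and spotting the forbidden submatrix inside $N$. The one delicate point I expect to require care is the self-similarity bookkeeping: one must track that the all-$1$'s matrix $A$ keeps the top-left entry equal to $1$ under iteration, so that a $B$-block, and hence an $N$-block, is regenerated in the top-left corner at every step. Once that is in place, everything else follows from Theorems~\ref{Thm-permuterow010} and~\ref{N-W-R-44} together with the heredity of word-representability.
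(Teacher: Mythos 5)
Your proposal is correct, and its core argument is genuinely different from the paper's. The first half coincides: you and the paper both verify $G^2(A,B)$ is word-representable via the column permutation $1324$ and Theorem~\ref{Thm-permuterow010}. For non-word-representability, however, the paper argues each parity separately with its permutation machinery: for $k=3$ it observes that every row of $M^3(A,B)$ has exactly two zeros, so any permutation satisfying condition (i) of Theorem~\ref{Thm-genpermutecolumn} forces some row into the form $1^a0^21^{6-a}$ and condition (ii) then fails; for $k=4$ it derives a contradiction from cyclic-consecutiveness requirements on three row patterns of $R^4(A,B)$; and it then needs Theorem~\ref{Thm-2chains} applied to \emph{both} base cases $k=3$ and $k=4$ to cover all $k\geq 3$. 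You instead isolate the single matrix $N=\varphi(B)=\begin{bmatrix}B&A\\A&B\end{bmatrix}$, note that its rows $1,4,3$ form the third forbidden matrix of Theorem~\ref{N-W-R-44} (and $N$ has no all-$1$'s row, so that theorem applies), and run a short self-similarity induction showing $N$ is the top-left $4\times4$ block of $M^k(A,B)$ for every $k\geq 3$; heredity via the submatrix-to-induced-subgraph principle (implicit in the proof of Theorem~\ref{thm-submat}, and immediate from the definition of $S(M)$) finishes all $k\geq3$ in one stroke. Your route is shorter and more uniform: it eliminates the separate $G^4$ computation and the appeal to Theorem~\ref{Thm-2chains}, and it exhibits a concrete forbidden induced subgraph rather than a non-existence argument over all column permutations. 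What the paper's route buys in exchange is generality of method: Theorem~\ref{N-W-R-44} is special to matrices with four columns (clique size $4$ or $5$), so the paper's permutation-based technique is the one that transfers to cases, such as Proposition~\ref{Propcase117}, where no forbidden-subgraph list is available. All your computational details check out ($M^2=\varphi(A)$, $\varphi(B)$, the row selection $1,4,3$, and the induction $M^{k-1}_{11}=1\Rightarrow$ top-left $2\times2$ block of $M^{k}$ is $B\Rightarrow$ top-left $4\times4$ block of $M^{k+1}$ is $N$).
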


\begin{proof}
	Note that 
	$M^2(A,B)=\begin{bmatrix}1&0&1&0\\0&1&0&1\\1&0&1&0\\0&1&0&1\end{bmatrix}$ and $$M^3(A,B)= \begin{bmatrix}
	1&0&1&1&1&0&1&1\\
	0&1&1&1&0&1&1&1\\
	1&1&1&0&1&1&1&0\\
	1&1&0&1&1&1&0&1\\
	1&0&1&1&1&0&1&1\\
	0&1&1&1&0&1&1&1\\
	1&1&1&0&1&1&1&0\\
	1&1&0&1&1&1&0&1
	\end{bmatrix}.  $$
	Reordering columns of $M^2(A,B)$ in order given by 1324 yields the matrix $\begin{bmatrix}
	1&1&0&0\\0&0&1&1\\1&1&0&0\\0&0&1&1
	\end{bmatrix}$. By Theorem \ref{Thm-permuterow010}, $G^2(A,B)$ is word-representable. Let $M^*$ be a matrix obtained from reordering columns of $M^3(A,B)$ and every rows of $M^*$ are of the form $0^r1^s0^t$ or $0^r1^s0^t$. Since every rows of $M^3(A,B)$ has 2 zero entries, there is a row of $M^*$ of the form $1^a0^21^{6-a}$ where $ 1 \leq a \leq 5$. Note that there exist a row of $M^*$ having $1$'s in the columns $a$ and $a+2+1$. So $M^*$ cannot satisfy the second condition of Theorem~\ref{Thm-genpermutecolumn}. Hence $G^3(A,B)$ is non-word-representable, and so $\IWR$$(A,B)=3$.
	
	Now we consider the word-representability of $G^4(A,B)$. We have  
	\begin{align*}
	R^4(A,B) = \{ &1011101010111010, 0111010101110101, 1110101011101010,\\  &1101010111010101, 1010101110101011, 0101011101010111,\\ &1010111010101110, 0101110101011101 \}.
	\end{align*}
In order to apply Theorem~\ref{Thm-genpermutecolumn}, we assume the existence of an order $\rho = \rho_1\rho_2 \cdots \rho_{16}$ with the following properties:
	\begin{itemize}
		\item  from $1011101010111010 \in R^4(A,B)$, columns 2, 6, 8, 10, 14 and 16 must be cyclically consecutive;
		\item  from $1110101011101010 \in R^4(A,B)$, columns 4, 6, 8, 12, 14 and 16 must be cyclically consecutive;
		\item  from $1010101110101011 \in R^4(A,B)$, columns 2, 4, 6, 10, 12 and 14 must be cyclically consecutive.
	\end{itemize}
	It follows from the first and the second bullet points that 6, 8, 14, and 16 must be consecutive and then, w.l.o.g., 2 and 10 is next to the left of them and then 4 and 12 is next to the right them That means that 2 and 4 cannot be cyclically consecutive, which contradicts the second bullet point. So there is no such $\rho$ and $G^4(A,B)$ is non-word-representable. As $G^3(A,B)$ is non-word-representable, and the class of word-representable graphs is hereditary, by Theorem~\ref{Thm-2chains}, $G^{2k+1}(A,B)$ is non-word-representable for any $k \geq 1$. Similarly, $G^{2k}(A,B)$ is non-word-representable for any $k \geq 2$ because $G^4(A,B)$ is non-word-representable. Therefore, $G^{k}(A,B)$ is not word-representable for $k\geq 3$.\end{proof}

\begin{remark} \label{REMcase122}
	Proposition~\ref{Propcase121} gives Case 121 in Tables~\ref{tab:2x2-1}, and we obtain $\IWR$$(A,B)=3$ in Case 122 by a column and row permutation of $A$ and $B$.	
\end{remark}

\begin{proposition}\label{Propcase117}
	For $A=\begin{bmatrix} 1&1\\1&1 \end{bmatrix}$ and $B=\begin{bmatrix} 0&0\\1&0 \end{bmatrix}$, $\IWR$$(A,B)=5$.  Moreover, $G^{k}(A,B)$ is not word-representable for $k\geq 5$. 
\end{proposition}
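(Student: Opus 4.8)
The plan is to track the row patterns $R^k(A,B)$ through the substitutions $u=u_{A,B}$ and $l=l_{A,B}$, which here read $u(0)=l(0)=11$, $u(1)=00$ and $l(1)=10$, and at each level to decide whether the $1$'s of every row can be made cyclically consecutive, as demanded by Theorems~\ref{Thm-permuterow010} and~\ref{Thm-genpermutecolumn}. First I would record $R^1=\{11\}$, $R^2=\{0000,1010\}$ and $R^3=\{00110011,10111011,11111111\}$. Since $\Gamma(00110011)\subseteq\Gamma(10111011)\subseteq\Gamma(11111111)$, the poset $(R^3,\leq)$ is a chain, so Theorem~\ref{ThmTO->WR} gives that $G^1,G^2,G^3$ are word-representable. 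For $R^4$ the poset is no longer total (for instance $1010101010101010$ and $0011000000110000$ are incomparable), so instead I would merge clique vertices with equal neighbourhood (Lemma~\ref{lem-neighb}): the columns of $M^4$ fall into five classes, and in the resulting reduction one single reordering of the classes turns every row into the form $0^r1^s0^t$. Theorem~\ref{Thm-permuterow010} then yields that $G^4$ is word-representable, so $\IWR(A,B)\geq 5$.

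The crux is to show that $G^5(A,B)$ is non-word-representable, i.e.\ that condition (i) of Theorem~\ref{Thm-genpermutecolumn} already fails. After discarding all-zero rows (isolated vertices), moving all-one rows into the clique via Theorem~\ref{all1row>>all0column}, and merging equal columns by Lemma~\ref{lem-neighb}, word-representability of $G^5$ would require a cyclic ordering of the reduced columns under which each reduced row is a cyclic arc. I would compute this reduced data from $R^5$ (the $32$ columns collapse to ten classes meeting the nontrivial rows, exactly one of which is universal), delete the universal class (this preserves cyclic-arc feasibility), and then read off five rows whose complements must be arcs. These force, one after another, the adjacencies along a path of five column classes, pinning down both neighbours of an interior class; a fifth row then requires that same interior class to be adjacent to one of two further classes, which is impossible since both of its circular neighbours are already fixed. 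Hence no cyclic ordering exists and $G^5$ is non-word-representable, giving $\IWR(A,B)=5$.

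For the \emph{moreover} statement I would argue that this circular-arc infeasibility of $M^5$ propagates to both parities. For odd $k\geq 5$, iterating Theorem~\ref{Thm-2chains} shows $M^5$ is a submatrix of $M^k$; restricting any cyclic column ordering of $M^k$ to the columns of $M^5$ sends arcs to arcs, so $M^k$ is circular-arc infeasible too, condition (i) of Theorem~\ref{Thm-genpermutecolumn} fails (the offending five rows are not all-one, so the conversion of Theorem~\ref{all1row>>all0column} leaves them intact), and $G^k$ is non-word-representable. For even $k\geq 6$ I would exploit the all-one row $1^{2^{k-1}}\in R^{k-1}$, which is present at every odd level: its image $l\bigl(1^{2^{k-1}}\bigr)\in R^k$ carries $1$'s exactly in the odd positions, so any valid cyclic ordering of $M^k$ must place the odd-indexed and even-indexed columns in two complementary arcs. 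For each $x\in R^{k-1}$ the row $l(x)\in R^k$ has its $0$'s precisely at the even positions $\{2i:i\in\Gamma(x)\}$, which must then be consecutive inside the even arc; the induced order on the even columns therefore witnesses circular-arc feasibility of $R^{k-1}$, contradicting the odd case just treated. Thus $G^k(A,B)$ is non-word-representable for all $k\geq 5$.

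The main obstacle is the $G^5$ step: assembling the eleven length-$32$ patterns of $R^5$, carrying out the column-merging correctly, and isolating the precise five-row configuration that over-determines the circular neighbours of a single column class. Everything else is comparatively routine once this obstruction and the odd/even splitting trick are in hand; the only points demanding care are the bookkeeping for all-one rows through Theorem~\ref{all1row>>all0column}, and the two monotonicity observations that deleting a universal column and restricting to a submatrix each preserve cyclic-arc feasibility.
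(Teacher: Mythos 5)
Your proposal is correct, and its skeleton (word-representability through level $4$, a cyclic-consecutivity obstruction at level $5$ via Theorem~\ref{Thm-genpermutecolumn}, and Theorem~\ref{Thm-2chains} to propagate along the odd levels) matches the paper; but your handling of the even levels is genuinely different, and your level-$5$ bookkeeping is a mild variant. I checked your computations: $R^3$ is indeed the chain $00110011\leq 10111011\leq 1^8$; the columns of $M^4$ collapse to classes admitting a single interval ordering (the paper instead exhibits explicit permutations, $26153748$ and $2(10)4(12)3(11)195(13)7(15)6(14)(16)$); $R^5$ has exactly eleven patterns whose columns fall into ten classes meeting the nontrivial rows, one universal; and a five-row contradiction of the shape you describe really exists: writing $\beta,\gamma,\delta,\epsilon,\zeta,\eta,\iota$ for the classes of columns $2,3,4,5,7,8,10$, the complements of $(1011)^8$, $(1011101010111011)^2$ and $10101010\,1^8\,10101010\,1^8$ are the nested sets $\{\beta,\iota\}\subset\{\beta,\eta,\iota\}\subset\{\beta,\delta,\eta,\iota\}$, forcing (up to reflection) the arc $\delta\eta\beta\iota$ or $\eta\beta\iota\delta$, after which $\{\epsilon,\zeta,\eta\}$ (from $(11110000\,1^8)^2$), respectively $\{\gamma,\delta,\zeta,\eta\}$ (from $(0011)^8$), cannot be an arc --- so your sketch, including the slightly glossed case split, fills in. The paper gets its level-$5$ contradiction from four explicit rows on all $32$ columns; same method, different reduction. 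The real divergence is at even $k$: the paper computes three explicit patterns of $R^6(A,B)$, derives a contradiction by hand, and invokes Theorem~\ref{Thm-2chains} a second time for the even chain, whereas you transfer infeasibility across parities --- the all-one row $1^{2^{k-1}}\in R^{k-1}$ at odd levels (an easy induction, which you assert but do not prove) gives $l(1^{2^{k-1}})=(10)^{2^{k-1}}\in R^k$, splitting any good cyclic order into complementary odd/even arcs, and the rows $l(x)$, $x\in R^{k-1}$, with zeros exactly at $\{2i:i\in\Gamma(x)\}$, then reconstruct a good linear order for $M^{k-1}$ inside the even arc, contradicting the odd case. This buys a uniform treatment of all even $k\geq 6$ with no $R^6$ computation and no second appeal to Theorem~\ref{Thm-2chains}, at the price of the parity bookkeeping; the paper's route is more computational but self-contained at $k=6$. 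Your explicit care with all-one rows via Lemma~\ref{all1row>>all0column} is warranted, and in fact slightly more scrupulous than the paper's write-up, since $1^{32}\in R^5(A,B)$ while Theorem~\ref{Thm-genpermutecolumn} as stated excludes all-one rows.
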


\begin{proof} 
	It is easy to see that $G^0(A,B)$, $G^1(A,B)$ and $G^2(A,B)$ are word-representable. Since reordering columns of $M^3(A,B)$ in order given by the permutation 26153748 yields a matrix satisfying the condition in Theorem~~\ref{Thm-permuterow010}, $G^3(A,B)$ is word-representable. Further, reordering columns of $M^4(A,B)$ in order given by the permutation $$2(10)4(12)3(11)19 5(13)7(15)6(14)(16)$$ also yields a matrix satisfying the condition in Theorem~\ref{Thm-permuterow010}, so $G^4(A,B)$ is word-representable. 
	
Next, we  consider $G^5(A,B)$. We use  $u$ and $l$ instead of $u_{A,B}$ and $l_{A,B}$, respectively.
In order to apply Theorem~\ref{Thm-genpermutecolumn}, we assume the existence of an order $\rho = \rho_1\rho_2 \cdots \rho_{32}$ proving word-representability of $G^5(A,B)$.
	\begin{itemize}
		\item $l(l(l(u(u(0)))))=10111011101110111011101110111011\in R^5(A,B)$ so columns 2, 6, 10, 14, 18, 22, 26 and 30 must be cyclically consecutive in $\rho$.
		\item $l(u(u(l(l(0)))))=10101010111111111010101011111111\in R^5(A,B)$ so columns 2, 4, 6, 8, 18, 20, 22 and 24 must be cyclically consecutive in $\rho$.		
		\item $l(u(l(l(u(0)))))=11111010111111111111101011111111\in R^5(A,B)$ so columns 6, 8, 22 and 24 must be cyclically consecutive in $\rho$.		
		\item $u(u(l(l(u(0)))))=11110000111111111111000011111111\in R^5(A,B)$ so columns 5, 6, 7, 8, 21, 22, 23 and 24 must be cyclically consecutive in $\rho$.
	\end{itemize}
	
	It follows from the first and the second bullet points that 2, 6, 18, and 22 must be consecutive and then, w.l.o.g., 10, 14, 26 and 30 are next to the left of these numbers and 4, 8, 20 and 24 are next to the right them. Hence $\rho$ contains $$\{ 10, 14, 26, 30 \}, \{ 2, 6, 18, 22 \}, \{ 4, 8, 20, 24 \}$$ where numbers in $\{ \}$ are consecutive in $\rho$ but are in some unknown to us order. From the third bullet point, $\rho$ contains $$\{ 10, 14, 26, 30 \}, \{ 2, 18 \}, \{ 6, 22 \}, \{ 8, 24 \}, \{ 4, 20 \}.$$ We obtain a contradiction with the fourth bullet point. So there is no such $\rho$ and $G^5(A,B)$ is non-word-representable. 
	
	Next, we consider word-representability of $G^6(A,B)$. In order to apply Theorem~\ref{Thm-genpermutecolumn}, we assume that there exist a permutation $\tau = \tau_1\tau_2 \cdots \tau_{64}$ proving word-representability of $G^6(A,B)$.
	\begin{itemize}
		\item $l(u(u(u(u(u(0)))))) \in R^6(A,B)$ is \\
		1010101010101010101010101010101010101010101010101010101010101010.\\ So, all odd columns must be cyclically consecutive in $\tau$.
		\item $u(u(u(u(l(l(0)))))) \in R^6(A,B)$ is \\
		111111111111111110000000000000000111111111111111110000000000000000.\\ So, columns in $\{1,2,\ldots,16,33,34,\ldots48\}$  must be cyclically consecutive in $\tau$.
		\item $u(l(l(u(u(u(0)))))) \in R^6(A,B)$ is \\
		0011000000110000001100000011000000110000001100000011000000110000.\\ So, columns in $\{3,4,11,12,19,20,27,28,35,36,43,44,51,52,59,60\}$  must be cyclically consecutive in $\tau$.
	\end{itemize}
	Since all odd columns must be cyclically consecutive in $\tau$, there is a unique $s \in \{1,2,\ldots,64\}$ such that $\tau_s$ is odd and $\tau_{s+1}$ is even, where for $s=64$, $s+1:=1$. From the second bullet point, we have
	\begin{align*}
	&  \{ \tau_{s},\tau_{s-1},\ldots,\tau_{s-15}\} = \{ 1,3,5,\ldots,15,33,35,37,\ldots,47 \}\\
	\text{~~and~~} & \{ \tau_{s+1},\tau_{s+2},\ldots,\tau_{s+16}\} = \{2,4,6,\ldots,16,34,36,38,\ldots,48\}
	\end{align*}
	where for $i\geq 0$, $\tau_{-i}:=\tau_{64-i}$ and for $s+i\geq 65$, $s+i:=s+i-64$. On the other hand, from the third bullet point, we have
	\begin{align*}
	&  \{ \tau_{s},\tau_{s-1},\ldots,\tau_{s-7}\} = \{ 3,11,19,27,35,43,51,59 \}\\
	\text{~~and~~} & \{ \tau_{s+1},\tau_{s+2},\ldots,\tau_{s+8}\} = \{4,12,20,28,36,44,52,60\}
	\end{align*}
	where the indices less than 1 and larger than 64 are treated as above. We obtain a contradiction because $ 19 \notin  \{ \tau_{s},\tau_{s-1},\ldots,\tau_{s-15}\} $ but $19 \in \{ \tau_{s},\tau_{s-1},\ldots,\tau_{s-7}\}$. Hence, there is no such $\tau$ and $G^6(A,B)$ is non-word-representable.
	Since the graphs $G^5(A,B)$ and $G^6(A,B)$ are not word-representable and the class of word-representable graphs is hereditary, by Theorem~\ref{Thm-2chains}, $G^k(A,B)$ is non-word-representable for any $k \geq 5$.
	\end{proof}

\begin{remark} \label{REMcase115-116-118}
Proposition~\ref{Propcase117} is Case 117 in Table~\ref{tab:2x2-1}. By Theorem~\ref{Thm-ABcolpermute}, column and row permutations of $A$ and $B$ give the same $\IWR$. Consequently, we also have $\IWR$$(A,B)=5$ for $A$ and $B$ in Cases 115, 116 and 117.	
\end{remark}

\section{Concluding remarks}\label{final-sec}

This paper is a major contribution to the study of word-representability of split graphs. Two key achievements in the paper are as follows:
\begin{itemize}
	\item Theorem~\ref{Thm-genpermutecolumn} can be used to study word-representability of split graphs via adjacency matrices, which is a novel approach.
	\item Necessary conditions for word-representability of split graphs obtained by iteration of morphism can be checked in polynomial time. Indeed, for a given such graph defined by matrices $A$ and $B$, we can go through all of $2 \times 2$ submatrices in $A$, and the respective submatrices in $B$, and then use our classification results in Tables~\ref{tab:2x2-1} and~\ref{tab:2x2-2} to detect non-word-representability.
\end{itemize}

As for open directions of research, it would be useful to provide a classification, similar to that we provided for $2 \times 2$ matrices in Tables~\ref{tab:2x2-1} and~\ref{tab:2x2-2}, for larger matrices, for example, for $2\times 3$ matrices, or $3 \times 3$ matrices, etc. This would enlarge our knowledge of word-representable split graphs obtained by iteration of morphisms, and hopefully, will eventually lead to a complete classification of such graphs.

For another research question, we noted in Tables~\ref{tab:2x2-1} and~\ref{tab:2x2-2} that if $G^4(A,B)$ is word-representable, then $IWR(A,B)=\infty$. In other words, the largest finite IWR in the case of $2 \times 2$ matrices is 4. Is there a reason for that? Does there exist a positive integer $t$ (a constant, or a function of $n$ and $m$) making the following statement true ``If $A,B$ are $n \times m$ matrices and $G^t(A,B)$ is word-representable, then $IWR(A,B)=\infty$"?

Finally, recall that if the leftmost bottom entry of $A$ is $1$ then $\lim_{k\to\infty}M^k(A,B)$ may not be well-defined as the sequence of graphs $G^k(A,B)$, for $k\geq 0$, may not be a chain of induced subgraphs. In all such cases, for $2 \times 2$ matrices, we still have that non-word-representability of $G^k(A,B)$ implies non-word-representability of $G^{k+1}(A,B)$. Is it always the case for $m\times n$ matrices $A$ and $B$? If not, then how do we characterize the situations when it is the case?

\end{document}